\def\EMAIL#1{\href{mailto:#1}{#1}}
\def\URL#1{\href{#1}{#1}}         
\newcommand{\LD}{\langle}
\newcommand{\RD}{\rangle}
\newcommand{\mR}{\mathbb{R}}
\newcommand{\mC}{\mathcal{C}}
\newcommand{\mL}{\mathcal{L}}
\newcommand{\mM}{\mathcal{M}}
\newcommand{\mB}{\mathcal{B}}
\newcommand{\mD}{\mathcal{D}}
\def\I{\mathcal{I}}
\def\J{\mathcal{J}}
\def\K{\mathcal{K}}
\def\P{{\mathcal P}}
\def\L{{\mathcal L}}
\def\T{{\mathcal T}}
\def\0{{\boldsymbol 0}}
\def\ba{{\boldsymbol{a}}}
\def\bb{{\boldsymbol{b}}}
\def\LP{{\L\P}}
\def\H{{\mathcal H}}
\def\bv{{\boldsymbol{v}}}
\def\bx{{\boldsymbol{x}}}
\def\bu{{\boldsymbol{u}}}
\def\bd{{\boldsymbol{d}}}
\def\bp{{\boldsymbol{p}}}
\def\bq{{\boldsymbol{q}}}
\def\bw{{\boldsymbol{w}}}
\def\bzeta{{\boldsymbol{\zeta}}}
\def\bz{{\boldsymbol{z}}}
\def\blambda{{\boldsymbol{\lambda}}}
\def\tlambda{{\boldsymbol{\lambda}}^\star}
\def\tx{{\boldsymbol{x}}^\star}
\def\tu{{\boldsymbol{u}}^\star}
\def\tz{{\boldsymbol{z}}^\star}
\def\tw{{\boldsymbol{w}}^\star}
\def\tp{{\boldsymbol{p}}^\star}
\def\tq{{\boldsymbol{q}}^\star}
\def\tzeta{{\boldsymbol{\zeta}}^\star}
\def\bblambda{\bar{\boldsymbol{\lambda}}}
\def\tbq{{\tilde{\boldsymbol{q}}}}
\def\tbp{{\tilde{\boldsymbol{p}}}}
\def\tbx{{\tilde{\boldsymbol{x}}}}
\def\tbu{{\tilde{\boldsymbol{u}}}}
\def\bbx{\bar{\boldsymbol{x}}}
\def\bbu{\bar{\boldsymbol{u}}}
\def\tg{{\tilde{g}}}
\def\tf{{\tilde{f}}}
\def\tbz{{\tilde{\boldsymbol{z}}}}
\def\tblambda{{\tilde{\blambda}}}
\def\tbw{{\tilde{\boldsymbol{w}}}}
\def\tbp{{\tilde{\boldsymbol{p}}}}
\def\tbq{{\tilde{\boldsymbol{q}}}}
\def\tbzeta{{\tilde{\boldsymbol{\zeta}}}}
\def\hH{{\hat{H}}}
\newcommand{\mZ}{\mathcal{Z}}
\def\hQ{{\hat{Q}}}
\def\hS{{\hat{S}}}
\def\hR{{\hat{R}}}
\def\bomega{{\boldsymbol{\omega}}}
\def\bbp{\bar{\boldsymbol{p}}}
\def\bbq{\bar{\boldsymbol{q}}}
\def\bbzeta{\bar{\boldsymbol{\zeta}}}
\def\tDelta{{\tilde{\Delta}}}
\newcommand{\rbr}[1]{\left(#1\right)}
\newcommand{\cbr}[1]{\left\{#1\right\}}
\newcommand{\nbr}[1]{\left\|#1\right\|}
\DeclareMathOperator{\diag}{diag}
\def\tblambdai{\tblambda_i^{\star}}
\def\tbzi{\tbz_i^{\star}}
\def\tbwi{\tbw_i^{\star}}
\def\tbzetai{\tbzeta_i^{\star}}
\newcommandx{\unsure}[2][1=]{\todo[linecolor=red,backgroundcolor=red!25,bordercolor=red,#1]{#2}}
\newcommandx{\change}[2][1=]{\todo[linecolor=blue,backgroundcolor=blue!25,bordercolor=blue,#1]{#2}}
\newcommandx{\info}[2][1=]{\todo[linecolor=OliveGreen,backgroundcolor=OliveGreen!25,bordercolor=OliveGreen,#1]{#2}}
\newcommandx{\improvement}[2][1=]{\todo[linecolor=Plum,backgroundcolor=Plum!25,bordercolor=Plum,#1]{#2}}
\crefname{section}{section}{sections}
\begin{document}

\RUNAUTHOR{Na, Anitescu, and Kolar}

\RUNTITLE{A Fast Temporal Decomposition Procedure for NLDP}

\TITLE{A Fast Temporal Decomposition Procedure for Long-horizon Nonlinear Dynamic Programming}

\ARTICLEAUTHORS{%
\AUTHOR{Sen Na}
\AFF{ICSI and Department of Statistics, University of California, Berkeley, \EMAIL{senna@berkeley.edu} \URL{}}
\AUTHOR{Mihai Anitescu}
\AFF{Mathematics and Computer Science Division, Argonne National Laboratory, \EMAIL{anitescu@mcs.anl.gov} \URL{}}
\AUTHOR{Mladen Kolar}
\AFF{Booth School of Business, University of Chicago, \EMAIL{mladen.kolar@chicagobooth.edu} \URL{}}
}

\ABSTRACT{We propose a \textit{fast} temporal decomposition procedure for solving long-horizon nonlinear dynamic programs. The core of the procedure is sequential quadratic programming (SQP) that utilizes a differentiable exact augmented Lagrangian as the merit function. Within each SQP iteration, we approximately solve the Newton system using an overlapping temporal decomposition strategy. We show that the approximate search~direction is still a descent direction of the augmented Lagrangian, provided the overlap size and penalty parameters are suitably chosen, which allows us to establish the global convergence. Moreover, we show that a unit~stepsize is accepted locally for the approximate search direction, and further establish a uniform,~local~\mbox{linear}~convergence over stages. This local convergence rate matches the rate of the recent Schwarz scheme \cite{Na2022Convergence}. However, the Schwarz scheme has to solve nonlinear subproblems to optimality in~each~iteration, while we only perform a single Newton step instead. Numerical experiments validate our theories and demonstrate the superiority of our method.
}

\KEYWORDS{nonlinear dynamic programming; temporal decomposition; sequential quadratic programming; augmented Lagrangian}

\maketitle

\section{Introduction}\label{sec:1}
We consider nonlinear equality-constrained dynamic programs (NLDPs):
\vskip-10pt
\begin{subequations}\label{pro:1}
\begin{align}
\min_{\bx, \bu}\;\;\; & \sum_{k=0}^{N-1} g_k(\bx_k, \bu_k) + g_N(\bx_N), \label{pro:1a}\\
\text{s.t. }\; & \bx_{k+1}   = f_k(\bx_k, \bu_k), \quad k = 0, 1, \ldots, N-1, \label{pro:1b}\\
& \bx_0 = \bbx_0, \label{pro:1c}
\end{align}
\end{subequations}
where $\bx_k \in\mR^{n_x}$ is the state variable, $\bu_k\in\mR^{n_u}$ is the control variable, $g_k: \mR^{n_x}\times \mR^{n_u} \rightarrow \mR$ ($g_N: \mR^{n_x}\rightarrow \mR$) is the cost function, $f_k: \mR^{n_x}\times \mR^{n_u}\rightarrow \mR^{n_x}$ is the dynamical constraint function, $\bbx_0$ is the given initial state, and $N$ is the temporal horizon length. In the control literature, Problem \eqref{pro:1}~is also called nonlinear optimal control problem. This paper focuses on solving \eqref{pro:1} with a large $N$.

Large-scale nonlinear problems pose significant computational challenges due to the nonlinearity of the problems and the large number of variables that need to be optimized. Numerous solvers have been developed to address the scalability issue from different aspects. For example, IPOPT~\citep{Waechter2005implementation} is a primal-dual interior point method that employs the logarithmic barrier function with the filter line-search step. It enjoys both global and local superlinear convergence \citep{Waechter2005Line, Waechter2005Linea}.~As~another~example, Knitro \citep{Byrd2006Knitro} integrates the advantages of two complementary methods---the interior point method and the active-set method---in a unified package, and achieves~the robust~performance. We refer~to \cite{Petra2014Augmented, Chiang2014Structured, Wan2016Structured} for the other scalable high-performance solvers for nonlinear programs.

The long-horizon NLDPs are of special interest as they appear in a variety of applications~including portfolio management \citep{Topaloglou2008dynamic}, autonomous vehicle \citep{Frasch2013auto, Zanon2014Model}, and power planning \citep{Dias2013Parallel}. The aforementioned centralized solvers, while exploiting the problem-dependent sparse structures to accelerate the computation, are not particularly designed for NLDPs. As these solvers do not take advantage of generic properties of dynamical systems, they are deficient when directly implemented~on~Problem \eqref{pro:1}, especially when we have limited computing resources. The generic properties of dynamical systems, such as sensitivity and controllability, play a key role in developing various efficient~algorithms \citep{Xu2018Exponentially, Na2020Exponential, Shin2019Parallel, Shin2020Decentralized, Shin2021Diffusing}. Further, the centralized solvers are not flexible enough to be implemented on different types of computing hardware. Their performance heavily relies on a single high-speed processor, which sometimes is inaccessible. These limitations motivate us to design a parallel,~DP-oriented procedure for \eqref{pro:1}.

In contrast to solving NLDPs in real time \citep{Diehl2002Real, Diehl2005Nominal, Diehl2009Efficient, Zanelli2020Stability}, the dominant class of distributed~offline methods is the decomposition-based methods, where the full problem is decomposed into multiple subproblems, which are then solved in a parallel environment \citep{Wright1990Solution}. For example, temporal~decomposition \citep{Barrows2014Time, Laine2019Parallelizing}, Lagrangian dual decomposition \citep{Lemarechal2001Lagrangian, Beccuti2004Temporal}, and alternating direction method of multipliers (ADMM) \citep{Boyd2010Distributed, ODonoghue2013splitting} are widely adopted in practice. The decomposition-based methods are preferable when the problem scale (i.e., $N$ in our case) is so large that solving \eqref{pro:1} on a single processor is computationally prohibitive, but multiple processors can be used to solve subproblems in parallel.

This paper contributes to the literature on the overlapping temporal decomposition (OTD) methods \cite{Barrows2014Time, Xu2018Exponentially, Shin2019Parallel, Na2022Convergence}. A TD method decomposes the full temporal horizon $[0, N]$ into several short horizons $[0, N]\subseteq \cup_{i}[n_i, n_{i+1}]$; constructs subproblems $\{\P^i\}_i$ associated with short horizons~$\{[n_i, n_{i+1}]\}_i$; solves all subproblems in parallel; and retrieves the full-horizon solution by composing the subproblem solutions sequentially. Different from the exclusive decomposition in TD, OTD extends each short horizon $[n_i, n_{i+1}]$ by $b$ stages on two ends to encourage information exchange between two adjacent subproblems. The composition is performed by using only the exclusive part, $[n_i, n_{i+1}]$, of each subproblem's solution. OTD was first empirically studied in \cite{Barrows2014Time} on power planning problems, and rigorously analyzed for linear-quadratic convex DPs in \cite{Xu2018Exponentially}. In this work, the authors showed under the controllability and boundedness conditions that $\max_k\|(\hat{\bx}_k - \tx_k; \hat{\bu}_k-\tu_k)\| \leq C\rho^b$~for~some constants $C>0$ and $\rho\in(0, 1)$. Here, $(\hat{\bx}_k, \hat{\bu}_k)$ is the OTD output at the stage $k$ and $(\tx_k, \tu_k)$ is the optimal solution. For general NLDPs, \cite{Shin2019Parallel, Na2022Convergence} recently proposed an overlapping Schwarz scheme~as an application of OTD on nonlinear problems. We will review the Schwarz scheme in \cref{sec:2} but briefly introduce it here to motivate our study.

Following the same spirit as OTD, the Schwarz scheme decomposes $[0, N]$ by $[0, N]\subseteq~\cup_{i}~[m_1^i, m_2^i]$, where $m_1^i = n_i-b$ and $m_2^i = n_{i+1}+b$ are two boundaries of the extended \mbox{intervals}.~The~\mbox{$i$-th}~subproblem $\P^i = \P^i(\bd_i)$ is parameterized by some boundary variables $\bd_i = (\bd_{m_1^i}; \bd_{m_2^i})$.~In~the~\mbox{$\tau$-th}~\mbox{iteration}, one first specifies the boundary variables $\bd_i^\tau$ by the solutions of adjacent \mbox{subproblems}~(\mbox{noting}~that $m_1^i, m_2^i\notin [n_i, n_{i+1}]$); then solves the subproblems $\{\P^i(\bd_i^\tau)\}_i$ to \textit{optimality} in parallel and updates~the boundary variables to $\bd_i^{\tau+1}$. The solutions to subproblems are finally composed to derive a~full-horizon solution. The Schwarz scheme is demonstrated in Figure \ref{fig:1}. Clearly,~the~OTD step corresponds to one~\mbox{iteration} of the Schwarz scheme. \cite{Na2022Convergence} empirically showed that the Schwarz~scheme~significantly~improves~the~\mbox{efficiency}~of~ADMM;~and~may~be~as~\mbox{efficient}~as~the~\mbox{centralized}~solver~IPOPT, but provide significant flexibility on different computing environments. By adjusting the overlap size $b$ of the decomposition, the scheme adapts to centralized or to decentralized environments. Furthermore, unlike ADMM whose convergence for nonlinear \mbox{problems}~is~\mbox{established}~for~some~setups~that may not directly apply to \eqref{pro:1} (see \cite{Hong2016Convergence, Wang2018Global}), the Schwarz scheme exhibits a uniform local~linear~convergence. Under the same conditions of OTD on linear-quadratic convex DPs in \cite{Xu2018Exponentially},~\cite{Na2022Convergence}~showed $\max_k\|(\bx_k^\tau - \tx_k; \bu_k^\tau-\tu_k)\|  \leq (C\rho^b)^\tau\max_k\|(\bx_k^0 - \tx_k; \bu_k^0-\tu_k)\|$, where $(\bx_k^\tau, \bu_k^\tau)$ is the $\tau$-th iterate of the Schwarz scheme. Despite this promising result, the Schwarz scheme has two limitations.

\begin{figure}[tbp]
\centering
\includegraphics[width=13cm]{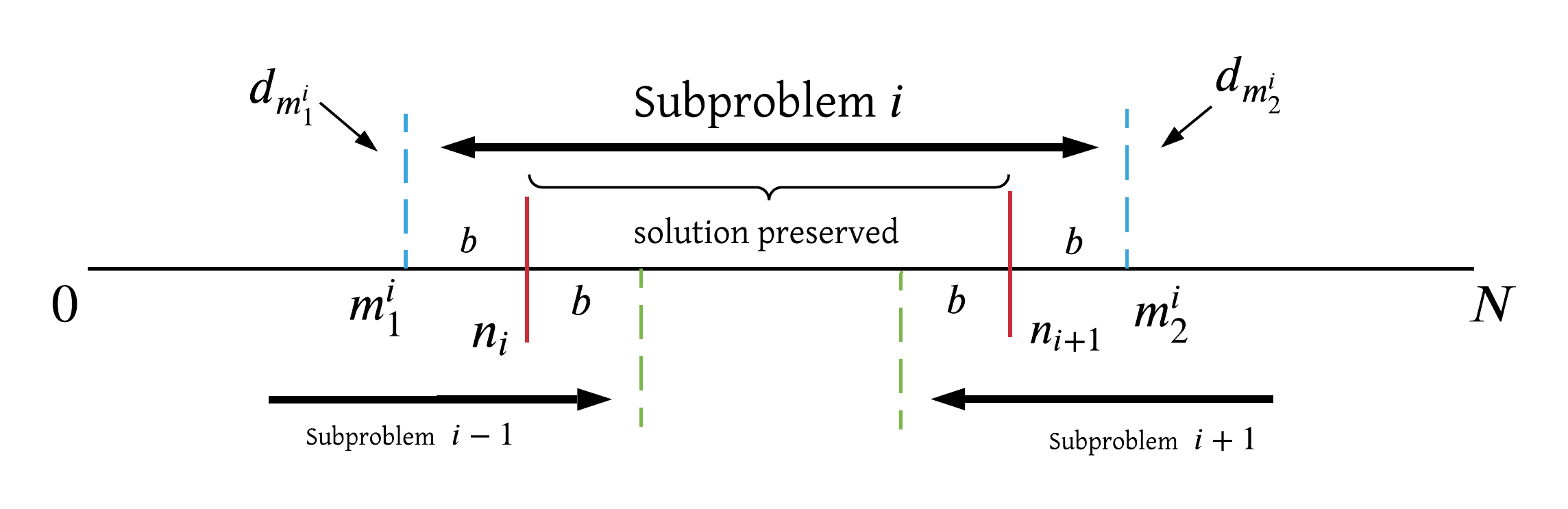}
\vskip-0.5cm
\caption{Demonstration of the Schwarz scheme. The horizontal line is the full horizon $[0, N]$. The red vertical~lines are knots of the exclusive intervals; the blue vertical lines are boundaries of the extended intervals. The subproblem $i$ parameterized by $\bd_i = (\bd_{m_1^i}, \bd_{m_2^i})$ is solved, but only the exclusive part of the solution is used in the composition.}
\label{fig:1}
\end{figure}

First, only local convergence guarantee is established while global convergence is still unknown. In fact, as we will explain later, the Schwarz scheme does not converge globally in general. There is no guarantee that the composed solution of the subproblems~is~the~solution to the full problem, even if we correctly specify the boundary variables for each subproblem. Thus, we arise the question:
\vskip4pt
{\em Q1: How to design an OTD-based procedure that converges globally?}
\vskip4pt
Second, the Schwarz scheme is computationally expensive. In each iteration, the scheme solves~all nonlinear subproblems $\{\P^i(\bd_i)\}_i$ to \textit{optimality}. However, we want to know:
\vskip4pt
{\em Q2: Is it necessary to solve subproblems to optimality in each iteration to enjoy (uniform) local linear convergence?}
\vskip4pt

We answer Q1 and Q2 by designing a \textit{fast} OTD (FOTD) procedure. Our procedure is inspired by \cite{Wright1990Solution}, where the author applied the sequential quadratic programming (SQP) to solve \eqref{pro:1}, and a parallel block-banded linear solver to solve the exact Newton system. By using the unit stepsize,~\cite{Wright1990Solution} conducted a local analysis. In this paper, we propose a FOTD procedure to integrate~global and local analyses. Similar to \cite{Wright1990Solution}, FOTD is also built upon an SQP scheme. However, it utilizes an exact augmented Lagrangian merit function to adaptively select the stepsize via line search; and adopts an OTD strategy to approximately solve the Newton system. While there are many distributed methods for solving the Newton system exactly or approximately, e.g., \cite{Nielsen2015parallel, Nielsen2016O, Laine2019Parallelizing}, we specifically focus on OTD in order to build a relation to the Schwarz scheme. We prove that FOTD converges globally. A key technical step is to show that the approximate direction is a descent direction of the augmented Lagrangian, so that the iterates are improved towards the KKT point. This technical step justifies the choice of the augmented Lagrangian merit function. Furthermore,~we~show~that~the unit stepsize for the approximate direction is accepted locally, so that FOTD enjoys a uniform~local linear convergence. Such a linear convergence matches the one of the Schwarz scheme \cite{Na2022Convergence}; however, FOTD requires much fewer computations as it does not solve nonlinear subproblems to optimality. Our experiments validate the theorems and demonstrate the~superiority of FOTD.

\vskip 4pt

\noindent{\bf Structure of the paper:} In \cref{sec:2}, we present the preliminaries of OTD and review the~Schwarz scheme. In \cref{sec:3}, we introduce our FOTD procedure. In \cref{sec:4}, we study the approximation error of the Newton system. The global and local convergence results are established in \cref{sec:5} and \cref{sec:6}, respectively. Numerical experiments are presented in \cref{sec:7} followed by conclusions in \cref{sec:8}. All the proofs are collected in appendices to make the main paper compact.

\vskip 4pt

\noindent{\bf Notation:} For an integer $n$, $[n] \coloneqq  \{0, 1, \ldots, n\}$. For two integers $n, m$, we abuse the interval notation and let $[n, m]$, $(n, m)$, $[n, m)$, $(n, m]$ be the corresponding index sets. All vectors in the~paper~are column vectors. For two vectors $\ba$, $\bb$, $(\ba; \bb)$ denotes the column vector that stacks $\ba$ and $\bb$ sequentially. For a vector-valued function $f: \mR^n\rightarrow \mR^m$, $\nabla f\in\mR^{n\times m}$ is its Jacobian matrix. We let $\|\cdot\|$ denote the $\ell_2$ norm for vectors and the spectral norm for matrices. For a symmetric matrix $A$, $\lambda_{\min}(A)$ denotes its smallest eigenvalue. We let $I$ be the identity matrix and $\0$ be the zero matrix, whose dimensions can be inferred from the context. We also reserve~the following notation. $\bx = \bx_{0:N} = (\bx_0; \ldots; \bx_N)$~is the state vector; $\bu = \bu_{0:N-1} = (\bu_0; \ldots \bu_{N-1})$ is the control vector; $\bz_k = (\bx_k; \bu_k)$ ($\bz_N = \bx_N$) is the state-control pair at stage $k$, and $\bz = \bz_{0:N} = (\bz_0; \ldots; \bz_N)$. We let $n_z = (N+1)n_x + Nn_u$ and $\bz\in \mR^{n_z}$. We may also express $\bz = (\bx, \bu)$ when explicitly specifying the components $\bx, \bu$ of $\bz$.

\section{Preliminaries}\label{sec:2}

\hskip-6pt We start by setting up an overlapping temporal decomposition (OTD). Given the full horizon $[0, N]$, we decompose it into $M$ exclusive intervals with knots $0 = n_0<n_1<\cdots<n_M = N$. For example, we can choose evenly spaced knots $n_i = i\cdot N/M$ (suppose $M$ is a divisor of $N$). Each one of these intervals is then extended by $b\geq 1$ stages on two ends; that is, the boundaries of the extended intervals are
\begin{equation}\label{equ:m}
m_1^i = \max\{n_i - b,\; 0\}, \quad\quad m_2^i = \min\{n_{i+1} +b,\; N\}, \quad\quad i = 0,1,\ldots, M-1.
\end{equation}
To simplify the notation, we use $m_1, m_2$ to denote the boundaries of a general extended interval $i$. We note that two successive extended intervals overlap on $2b$ stages.

For the extended interval $i$, we define the corresponding subproblem as
\begin{subequations}\label{pro:2}
\begin{align}
\P^i_{\mu}(\bd_i):\quad  \min_{\tbx_i, \tbu_i}\;\;\; & \sum_{k=m_1}^{m_2-1} g_k(\bx_k, \bu_k) + \tg_{m_2}(\bx_{m_2}; \bd_{i, 2:4}), \label{pro:2a}\\
\text{s.t. }\; & \bx_{k+1}   = f_k(\bx_k, \bu_k), \quad k \in[m_1, m_2), \label{pro:2b}\\
& \bx_{m_1} = \bd_{i, 1}, \label{pro:2c}
\end{align}
\end{subequations}
where $\tbx_i = \bx_{m_1:m_2}$ and $\tbu_i = \bu_{m_1:m_2-1}$ are the state and control variables; $\bd_i = \bd_{i, 1:4}=(\bbx_{m_1}; \bbx_{m_2}; \\ \bbu_{m_2}; \bblambda_{m_2+1})$ are the boundary variables; $\tg_{m_2}(\bx_{m_2}; \bd_{i, 2:4}) = g_N(\bx_N)$ if $i=M-1$ (i.e., the last subproblem), otherwise for $\mu>0$,
\begin{equation}\label{equ:terminal:cost}
\tg_{m_2}(\bx_{m_2}; \bd_{i, 2:4}) = g_{m_2}(\bx_{m_2}, \bbu_{m_2}) - \bblambda_{m_2+1}^Tf_{m_2}(\bx_{m_2}, \bbu_{m_2}) + \frac{\mu}{2}\|\bx_{m_2} - \bbx_{m_2}\|^2.
\end{equation}
For the boundary variables $\bd_i= (\bd_{i,1}; \bd_{i,2:4})$, $\bd_{i,1} = \bbx_{m_1}$ is the initial state \mbox{variable};~$\bd_{i, 2:4} =~(\bbx_{m_2}; \bbu_{m_2}; \\ \bblambda_{m_2+1})$ are the terminal state, control, and dual variables. The boundary variables $\bd_i = (\bd_{i,1}; \bd_{i,2:4})$ are given to the subproblem. In \eqref{equ:terminal:cost}, $\mu$ is a \textit{uniform} quadratic penalty parameter independent~from $i$. In what follows, we denote by $\tbz_i = (\tbx_i, \tbu_i)$ the ordered state-control vector of the subproblem $i$.

The subproblem $\P^i = \P^i_{\mu}(\bd_i)$ in \eqref{pro:2} is essentially the truncation of the full problem \eqref{pro:1} onto the interval $[m_1, m_2]$, except that the initial state is fixed at $\bd_{i,1}= \bbx_{m_1}$ and the terminal cost on $\bx_{m_2}$ is adjusted by $\tg_{m_2}(\bx_{m_2}; \bd_{i, 2:4})$. The subproblem is parameterized by $\bd_i$; we always~specify~$\bd_i$~based on the previous iterate before solving $\P^i_{\mu}(\bd_i)$. The formula \eqref{equ:terminal:cost} was first proposed by \cite{Na2023Superconvergence} for analyzing the real-time model predictive control schemes. The middle term depending~on~$\bblambda_{m_2+1}$~and~$f_{m_2}$ reduces the KKT residual brought by the horizon truncation, and the quadratic penalty term~\textit{convexifies} the subproblem. The benefits of \eqref{equ:terminal:cost} will be clearer later. We clarify two corner cases:~for $i = 0$ and $m_1 = 0$, $\bbx_{m_1}$ is $\bbx_0$ from Problem \eqref{pro:1}; for $i=M-1$ and $m_2=N$, the adjustment on~the terminal cost is restored. We mention that there exist other subproblem formulations in some restrictive setups: if $g_{m_2}$, $f_{m_2}$ are separable, then $\bbu_{m_2}$ is not needed in \eqref{equ:terminal:cost}; if $g_{m_2}$ is quadratic and convex and $f_{m_2}$ is affine, then we can let $\mu=0$. All these formulations ensure that the subproblem $\P^i_{\mu}(\bd_i)$ is well-defined (e.g., is lower bounded) given a well-defined full problem.

Before introducing the Schwarz scheme, we need the following notation. We let $\blambda = \blambda_{0:N}$ be~the dual variables of \eqref{pro:1}, where $\blambda_{0}\in\mR^{n_x}$ is associated with the constraint in \eqref{pro:1c} and $\blambda_{k+1}\in\mR^{n_x}$ for $k\in[N-1]$ is associated with the $k$-th constraint in \eqref{pro:1b}. Similarly, we let $\tblambda_i = \blambda_{m_1:m_2}$ be the dual variables of \eqref{pro:2}. For the state variables $\tbx_i$ (similar for $\tbu_i, \tbz_i, \tblambda_i$) and $k\in[m_1, m_2]$,~$\tbx_{i, k}$~denotes the variable at the stage $k$ in the subproblem $i$. For example, $k = n_i$ belongs to both subproblem $i-1$ and subproblem $i$; thus $\tbx_{i-1, n_i}$ and $\tbx_{i, n_i}$ refer to different variables at the same stage. The primal-dual solution of $\P^i_{\mu}(\bd_i)$ is denoted by $(\tbzi(\bd_i), \tblambdai(\bd_i))$. We also define composition~and~decomposition operators in the next definition.

\begin{definition}[composition and decomposition]\label{def:1}
\hskip -8pt For the subproblem variables $\{(\tbz_i, \tblambda_i)\}_{i=0}^{M-1}$, we define a composition operator $\mC$ as $\mC(\{(\tbz_i, \tblambda_i)\}_i) = (\bz, \blambda)$, where $(\bz_k,\blambda_k) = (\tbz_{i, k}, \tblambda_{i, k})$ if $k\in [n_i, n_{i+1})$ for $i\in[M-1]$, and $\bz_N = \tbx_{M-1, N}$ and $\blambda_{N} = \tblambda_{M-1, N}$. Conversely, for the full-horizon~variable $(\bz, \blambda)$, we define a decomposition operator $\mD$ as $\mD(\bz, \blambda) =\{\mD_i(\bz, \blambda)\}_{i=0}^{M-1}$, where $\mD_i(\bz, \blambda) = (\tbz_i, \tblambda_i)$ with $\tbz_i = (\tbx_i, \tbu_i) = (\bx_{m_1:m_2}, \bu_{m_1:m_2-1})$ and $\tblambda_i = \blambda_{m_1:m_2}$.
\end{definition}

From Definition \ref{def:1}, we see that the variables on the overlapping stages are discarded during~the composition. That is, $\tbx_i = \tbx_{i, m_1:m_2}$ (similar for $\tbu_i, \tblambda_i$) only contributes $\tbx_{i, n_i:n_{i+1}-1}$ to the full vector.

\vskip 4pt
\noindent\textbf{Schwarz scheme.}
Given the $\tau$-th iterate $(\bz^\tau, \blambda^\tau)$, we specify the boundary variables by $\bd_i^\tau = (\bx_{m_1}^\tau; \\ \bx_{m_2}^\tau; \bu_{m_2}^\tau; \blambda_{m_2+1}^\tau)$; solve the subproblems $\{\P^i_{\mu}(\bd_i^\tau)\}_i$ in parallel to \textit{optimality}; and obtain the solutions $\{(\tbzi(\bd_i^\tau), \tblambdai(\bd_i^\tau))\}_i$. Then, $(\bz^{\tau+1}, \blambda^{\tau+1}) = \mC(\{(\tbzi(\bd_i^\tau), \tblambdai(\bd_i^\tau))\}_i)$. The Schwarz scheme is displayed in Algorithm \ref{alg:OTD}. The convergence of the scheme is achieved by iteratively updating $\bd_i$. One expects that, as $\tau$ increases, $\bd_i^\tau$ becomes more~\mbox{precise}~so~that~the~\mbox{solution}~of~$\P^i_{\mu}(\bd_i^\tau)$~approaches~to~the truncated full solution. Thus, the composed solution of subproblems can recover the full~solution.

The following result characterizes the relation between the optimality conditions of the subproblems \eqref{pro:2} and the full problem \eqref{pro:1}.

\begin{algorithm}[!tp]
\caption{Overlapping Schwarz Decomposition Procedure}
\label{alg:OTD}
\begin{algorithmic}[1]
\State \textbf{Input:} initial iterate $(\bz^0, \blambda^0)$ with $\bx_0^0 = \bbx_0$, a scalar $\mu>0$;
\For{$\tau = 0,1,2,\ldots$}
\For{$ i = 0, 1, \ldots, M-1$ \textbf{(in parallel)}}
\State Let $\bd_i^\tau = (\bx_{m_1}^\tau; \bx_{m_2}^\tau; \bu_{m_2}^\tau; \blambda_{m_2+1}^\tau)$ (note that $\bd_{M-1}^\tau = \bx_{m_1}^\tau$);
\State Solve $\P^i_\mu(\bd_i^\tau)$ to \textbf{optimality} and obtain the solution $(\tbzi(\bd_i^\tau), \tblambdai(\bd_i^\tau))$;
\EndFor
\State Let $(\bz^{\tau+1}, \blambda^{\tau+1}) = \mC(\{(\tbzi(\bd_i^\tau), \tblambdai(\bd_i^\tau))\}_i)$;
\EndFor
\end{algorithmic}
\end{algorithm}

\begin{theorem}[relation of KKT conditions]\label{thm:1}
For any scalar $\mu$, we have two cases:
	
\noindent\textbf{(i)} Suppose $(\bz^\star, \blambda^\star)$ is a KKT point of \eqref{pro:1}, then $\mD_i(\bz^\star, \blambda^\star)$, $\forall i\in[M-1]$, is~a~KKT~point~of~$\P^i_{\mu}(\bd_i^\star)$ where $\bd_i^\star = (\tx_{m_1}; \tx_{m_2}; \tu_{m_2}; \tlambda_{m_2+1})$;
	
\noindent\textbf{(ii)} Suppose $(\tbzi, \tblambdai) = (\tbzi(\bd_i), \tblambdai(\bd_i))$, $\forall i\in[M-1]$, is a KKT point of $\P^i_{\mu}(\bd_i)$ with any~boundary variables $\bd_i$ satisfying $\bd_{0, 1} = \bbx_0$, then $\mC(\{(\tbzi, \tblambdai)\}_i)$ is a KKT point of \eqref{pro:1} if and only~if~the~solutions of any two successive subproblems are compatible at the common boundaries. That is, for any $i \in[1, M-1]$ and knot $k = n_i$, we have\vskip-0.2cm
\begin{equation}\label{equ:con:cond}
\tbx_{i, k}^\star = \tbx_{i-1, k}^\star, \quad A_{k-1}^T(\tblambda_{i, k}^{\star} - \tblambda_{i-1, k}^{\star}) = \0, \quad B_{k-1}^T(\tblambda_{i, k}^{\star} - \tblambda_{i-1, k}^{\star}) = \0,
\end{equation}
where $A_{k-1} = \nabla_{\bx_{k-1}}^Tf_{k-1} \in \mR^{n_x\times n_x}$, $B_{k-1} = \nabla_{\bu_{k-1}}^Tf_{k-1} \in \mR^{n_x\times n_u}$ are the Jacobian matrices of $f_{k-1}$ evaluated at $\tbz_{i-1, k-1}^\star$.
\end{theorem}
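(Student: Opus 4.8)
My plan is to write out the KKT systems of the full problem \eqref{pro:1} and of a generic interior subproblem \eqref{pro:2}, and then match them stage by stage. Using costates $\blambda_{0:N}$ for \eqref{pro:1}, the stationarity conditions take the standard discrete costate-recursion form, coupling $\blambda_k$ and $\blambda_{k+1}$ through $A_k^T$ and $B_k^T$ at each interior stage, with $\nabla_{\bx_N}g_N$ fixing $\blambda_N$ at the terminal stage, together with primal feasibility \eqref{pro:1b}--\eqref{pro:1c}. For \eqref{pro:2} the system is identical on the interior stages $(m_1,m_2)$; the only structural differences are that $\bx_{m_1}$ is fixed to $\bd_{i,1}$ so that its costate $\tblambda_{i,m_1}$ is free, and that the terminal stationarity at $m_2$ is driven by $\nabla_{\bx_{m_2}}\tg_{m_2}$. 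The observation I would record first is that, by \eqref{equ:terminal:cost}, the gradient of the term $-\bblambda_{m_2+1}^Tf_{m_2}$ reproduces exactly the Jacobian coupling $A_{m_2}^T\bblambda_{m_2+1}$ that an ordinary horizon truncation would drop (stage $m_2$ carries no dynamics constraint inside the subproblem), while the penalty $\frac{\mu}{2}\|\bx_{m_2}-\bbx_{m_2}\|^2$ contributes $\mu(\bx_{m_2}-\bbx_{m_2})$.

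For part (i) I would substitute the restriction $\mD_i(\bz^\star,\blambda^\star)$ together with the data $\bd_i^\star=(\tx_{m_1};\tx_{m_2};\tu_{m_2};\tlambda_{m_2+1})$ into the subproblem KKT system. The interior stationarity and feasibility relations on $[m_1,m_2)$ are inherited verbatim from the full KKT system, so only the two endpoints need checking. At $m_1$ the subproblem leaves $\tblambda_{i,m_1}$ free, and the full $\bx_{m_1}$-stationarity pins it to $\blambda_{m_1}^\star$; at $m_2$ the penalty term vanishes because $\bbx_{m_2}=\tx_{m_2}$, while the coupling $A_{m_2}^T\bblambda_{m_2+1}=A_{m_2}^T\tlambda_{m_2+1}$ equals the full coupling $A_{m_2}^T\blambda_{m_2+1}^\star$, so the subproblem terminal stationarity collapses onto the full $\bx_{m_2}$-stationarity. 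The two corner cases---$i=0$ with $m_1=0$ (where $\bd_{0,1}=\bbx_0$) and $i=M-1$ with $m_2=N$ (where the modification is dropped and the terminal condition is just $\nabla_{\bx_N}g_N$)---are then verified directly.

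Part (ii) is the substantive direction, and I would organize the full KKT residual of $\mC(\{(\tbzi,\tblambdai)\}_i)$ by the subproblem that each quantity is sourced from. Any full stationarity or feasibility relation whose arguments $\bz_k,\blambda_k,\blambda_{k+1}$ all come from a single subproblem $i$---this covers every stage strictly interior to an exclusive interval $[n_i,n_{i+1})$, since $n_i$ and its neighbors sit strictly inside the extended interval $[m_1^i,m_2^i]$---is an interior relation of that subproblem and holds automatically. The relations that are not automatic are precisely those straddling a knot $k=n_i$, namely the three relations carried by stage $n_i-1$ (sourced from subproblem $i-1$) that reference $\bx_{n_i}$ and $\blambda_{n_i}$ (sourced from subproblem $i$): the dynamics $\bx_{n_i}=f_{n_i-1}(\bx_{n_i-1},\bu_{n_i-1})$, whose right-hand side equals $\tbx_{i-1,n_i}$ by subproblem $(i-1)$ feasibility while its left-hand side is $\tbx_{i,n_i}$; and the $\bx_{n_i-1}$- and $\bu_{n_i-1}$-stationarities. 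Subtracting each of these from the corresponding subproblem $(i-1)$ relation leaves exactly $\tbx_{i,n_i}^\star=\tbx_{i-1,n_i}^\star$, $A_{n_i-1}^T(\tblambda_{i,n_i}^\star-\tblambda_{i-1,n_i}^\star)=\0$ and $B_{n_i-1}^T(\tblambda_{i,n_i}^\star-\tblambda_{i-1,n_i}^\star)=\0$, i.e.\ \eqref{equ:con:cond}; reading the computation in both directions yields the claimed equivalence. The constraint $\bx_0=\bbx_0$ is supplied by $\bd_{0,1}=\bbx_0$, and the terminal stage needs no compatibility as it is sourced entirely from the last subproblem.

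The main obstacle will be bookkeeping rather than analysis: correctly tracking, at each junction $n_i$, which of $\bz_k,\blambda_k,\blambda_{k+1}$ is drawn from subproblem $i-1$ versus $i$ (the states, controls, and costate at $n_i-1$ come from $i-1$, whereas $\blambda_{n_i}$ comes from $i$), and confirming that the discarded overlap relations and the modified-terminal/fixed-initial terms of each subproblem never enter the composed residual. I would also verify the degenerate case of a unit-length exclusive interval, where the $\bu$-stationarity at stage $n_i$ already references the next knot, and fix the costate sign convention consistently so that all junction differences collapse exactly to \eqref{equ:con:cond}.
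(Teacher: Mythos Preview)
Your proposal is correct and follows essentially the same route as the paper's proof: write out the full and subproblem KKT systems explicitly, observe for part~(i) that with $\bd_i=\bd_i^\star$ the subproblem system is a subsystem of the full one (the penalty term vanishing and the $A_{m_2}^T\bblambda_{m_2+1}$ term reproducing the missing coupling), and for part~(ii) classify each full KKT relation by whether its arguments $\bz_k,\blambda_k,\blambda_{k+1},\bx_{k+1}$ are drawn from one subproblem or straddle a knot, the latter case yielding \eqref{equ:con:cond} after subtracting the corresponding subproblem relation. The paper indexes the junction as $k=n_{i+1}-1$ rather than your $k=n_i-1$, but this is only a shift in the labeling of the adjacent subproblems.
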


\begin{proof}
See Appendix \ref{pf:thm:1}.
\end{proof} 

From Theorem \ref{thm:1}(i), we can see that the truncated KKT point $\mD_i(\bz^\star, \blambda^\star)$ is also a KKT point~of the subproblem, provided the boundary variables are correctly specified (i.e., $\bd_i = \bd_i^\star$). However, Theorem \ref{thm:1}(ii) provides a negative view: even if one obtains a KKT point for each subproblem with any boundary variables, composing the solutions together does not necessarily result in a~KKT point of the full problem. Only if the successive KKT points are compatible on knots $\{n_i\}_{i=1}^{M-1}$~can we guarantee to have a full-horizon KKT point. The subtlety lies in the fact that the solution at stage $n_i$ is from subproblem $i$, while the solution at stage $n_i-1$ is from subproblem $i-1$. Thus, \eqref{equ:con:cond} is needed to link solutions from two successive subproblems. By Theorem \ref{thm:1}(ii), we know the Schwarz scheme, which simply composes subproblem solutions, may not converge~globally~in~general.

However, if $(\bz^0, \blambda^0)$ is sufficiently close to $(\tz, \tlambda)$, \cite{Na2022Convergence} showed that for $i\in[M-1]$, $\P_{\mu}^i(\bd_i)$~has~a unique solution $\mD_i(\tz, \tlambda)$ in a neighborhood of $\bd_i^\star$. 
Thus, one expects $(\tbzi(\bd_i^\tau), \tblambdai(\bd_i^\tau)) \rightarrow \mD_i(\tz, \tlambda)$ as $\tau\rightarrow \infty$, and $\mC(\{(\tbzi(\bd_i^\tau), \tblambdai(\bd_i^\tau)) \}_i) \rightarrow \mC(\{\mD_i(\tz, \tlambda)\}_i) = (\tz, \tlambda)$. Then, the local convergence~of the Schwarz scheme is ensured. Specifically, \cite{Na2022Convergence} proved for some $C>0$ and~$\rho\in(0, 1)$ that
\begin{equation}\label{equ:local:result}
\max_k\|(\bz_k^\tau - \tz_k; \blambda_k^\tau - \tlambda_k)\|\leq
(C\rho^b)^\tau \cdot \max_k\|(\bz_k^0 - \tz_k; \blambda_k^0 - \tlambda_k)\|.
\end{equation}

In addition to lacking global convergence, the Schwarz scheme is computationally intensive, since Line 5 of Algorithm \ref{alg:OTD} requires finding optimal solutions to nonlinear subproblems. In the~next~section, we relax this computational requirement by using an SQP scheme, and finally show in \cref{sec:6} that \eqref{equ:local:result} holds even when Line 5 is substituted by one Newton step (cf. Theorem \ref{thm:8}).

\section{Embedding OTD into SQP}\label{sec:3}

We note that the lack of global convergence of the Schwarz scheme is due to the lack of a coordinator, which can monitor the convergence progress towards the full solution. This motivates us to solve \eqref{pro:1} under an SQP framework. For each SQP iteration, we apply OTD to approximately solve the Newton system, which corresponds to a linear-quadratic DP problem. Although other distributed methods are also applicable, we are particularly interested in OTD since it reveals a nice relation to the Schwarz scheme. By embedding OTD into SQP, we are able to establish global convergence, which resolves one of the limitations of the Schwarz scheme.

We write Problem \eqref{pro:1} in a compact form by
\vskip-5pt
\begin{equation*}
\min_{\bz} \;\; g(\bz),\;\;\quad \text{s.t.}\;\; f(\bz) = \0,
\end{equation*}
where
\begin{equation}\label{equ:def:gf}
\begin{aligned}
g(\bz) & = \sum_{k=0}^{N} g_k(\bz_k) =  \sum_{k=0}^{N-1}g_k(\bx_k,\bu_k) + g_N(\bx_N),\\
f(\bz) & = (\bx_0 - \bbx_0; \bx_1- f_0(\bz_0);\ldots; \bx_{N} - f_{N-1}(\bz_{N-1}))\\
&  =  (\bx_0 - \bbx_0; \bx_1- f_0(\bx_0, \bu_0);\ldots; \bx_{N} - f_{N-1}(\bx_{N-1},\bu_{N-1})).
\end{aligned}
\end{equation}
The Lagrangian function is $\mL(\bz, \blambda) = g(\bz) + \blambda^Tf(\bz)$ and the  KKT conditions are $\nabla_{\bz}\mL(\tz, \tlambda) = \0$, $\nabla_{\blambda}\mL(\tz, \tlambda) = \0$. The SQP scheme applies Newton's method on the KKT system. In~particular, given the $\tau$-th iterate $(\bz^\tau, \blambda^\tau)$, the Newton direction $(\Delta \bz^\tau, \Delta \blambda^\tau)$ is obtained by solving
\vskip-5pt\begin{equation}\label{equ:Newton}
\begin{pmatrix}
\hH^\tau & (G^{\tau})^T\\
G^\tau & \0
\end{pmatrix}\begin{pmatrix}
\Delta \bz^\tau\\
\Delta \blambda^\tau
\end{pmatrix} = - \begin{pmatrix}
\nabla_{\bz}\mL^\tau\\
\nabla_{\blambda}\mL^\tau
\end{pmatrix},
\end{equation}
where $\nabla_{\bz}\mL^\tau = \nabla_{\bz}\mL(\bz^\tau, \blambda^\tau)$, $\nabla_{\blambda}\mL^\tau = \nabla_{\blambda}\mL(\bz^\tau, \blambda^\tau)$, $G^\tau = \nabla_{\bz}^Tf(\bz^\tau)$, and $\hH^\tau$ is a modification of the Hessian $H^\tau = \nabla^2_{\bz}\mL(\bz^\tau, \blambda^\tau)$ that preserves the block diagonal structure of $H^\tau$. The goal of~the~modification is to let $\hH^\tau$ be positive definite in the null space $\{\bz: G^\tau\bz = \0\}$, if $H^\tau$ is not. For example, a simple structure-preserving modification is the Levenberg-style modification \cite{Dunn1989Efficient}: $\hH^\tau = H^\tau + \gamma I $ for a suitably large $\gamma>0$. Other Hessian modification methods are referred to in \cite{Nocedal2006Numerical}. The explicit formula of the system \eqref{equ:Newton} is displayed in Problem \eqref{pro:3}.

Given the Newton direction $(\Delta\bz^\tau, \Delta\blambda^\tau)$ from \eqref{equ:Newton}, the SQP iterate is updated as
\begin{equation}\label{equ:update}
\begin{pmatrix}
\bz^{\tau+1}\\
\blambda^{\tau+1}
\end{pmatrix} = \begin{pmatrix}
\bz^\tau\\
\blambda^\tau
\end{pmatrix} + \alpha_\tau\begin{pmatrix}
\Delta\bz^\tau\\
\Delta\blambda^\tau
\end{pmatrix},
\end{equation}
with the stepsize $\alpha_\tau$ being selected by passing a line search condition based on a merit function.~We employ the following differentiable exact augmented Lagrangian as the merit function
\begin{equation}\label{equ:aug:L}
\mL_{\eta}(\bz, \blambda) = \mL(\bz, \blambda) + \frac{\eta_1}{2}\|\nabla_{\blambda}\mL(\bz, \blambda)\|^2 + \frac{\eta_2}{2}\|\nabla_{\bz}\mL(\bz, \blambda)\|^2,
\end{equation}
where $\eta = (\eta_1, \eta_2)$ are the penalty parameters. The first penalty biases the feasibility error, while the second penalty biases the optimality error. The function \eqref{equ:aug:L} is called exact augmented Lagrangian, since one can show that the solution of the unconstrained problem $\min_{\bz, \blambda} \mL_{\eta}(\bz, \blambda)$ is also the~solution of \eqref{pro:1}, provided $\eta_1$ is large enough and $\eta_2$ is small enough \cite[Proposition 4.15]{Bertsekas1996Constrained}. We refer to \cite{Pillo1979New, Pillo1980method, Zavala2014Scalable, Na2022adaptive, Na2023Inequality} for more studies of \eqref{equ:aug:L} on constrained optimization problems. We should~mention that there are many penalty functions that can be used as a merit function; however, \eqref{equ:aug:L} is particularly suitable and important for our analysis. First, recalling the subproblem terminal~cost \eqref{equ:terminal:cost}, we need an approximation of the terminal dual variable, which means that the merit function has to endure a dual perturbation. Such a requirement rules out the penalty functions that depend only on the primal variables $\bz$. Second, for the SQP schemes, the differentiable merit functions~such as \eqref{equ:aug:L} can overcome the Maratos effect and locally accept a unit stepsize, which is critical to have a fast local convergence rate. In contrast, the non-smooth merit functions suffer from the Maratos effect and require non-trivial modifications (e.g., the second-order correction) of the SQP schemes to achieve a fast local rate \cite[Chapter 15.5]{Nocedal2006Numerical}. We will clearly see the benefits of the merit function \eqref{equ:aug:L} later. In particular, see the discussion below Theorem \ref{thm:4} and see Theorem \ref{thm:6} as well.

With \eqref{equ:aug:L}, the stepsize $\alpha_\tau$ is selected to make the following Armijo condition hold
\vskip-5pt\begin{equation}\label{equ:Armijo}
\mL_{\eta}^{\tau+1} \leq \mL_{\eta}^\tau + \beta\alpha_\tau\begin{pmatrix}
\nabla_{\bz}\mL_{\eta}^\tau\\
\nabla_{\blambda}\mL_{\eta}^\tau
\end{pmatrix}^T\begin{pmatrix}
\Delta \bz^\tau\\
\Delta \blambda^\tau
\end{pmatrix},
\end{equation}
where $\beta\in(0, 1/2)$ is a prespecified parameter, $\mL_{\eta}^{\tau} = \mL_{\eta}(\bz^{\tau}, \blambda^{\tau})$ (similar for $\mL_{\eta}^{\tau+1}$, $\nabla\mL_{\eta}^{\tau}$), and
\vskip-5pt\begin{equation}\label{equ:der:aug:L}
\begin{pmatrix}
\nabla_{\bz}\mL_{\eta}(\bz, \blambda)\\
\nabla_{\blambda}\mL_{\eta}(\bz, \blambda)
\end{pmatrix} = \begin{pmatrix}
I + \eta_2H(\bz, \blambda) & \eta_1 G^T(\bz) \\
\eta_2G(\bz) & I
\end{pmatrix}\begin{pmatrix}
\nabla_{\bz}\mL(\bz, \blambda)\\
\nabla_{\blambda}\mL(\bz, \blambda)
\end{pmatrix}.
\end{equation}

Among the steps in \eqref{equ:Newton}, \eqref{equ:update}, and \eqref{equ:Armijo}, solving the Newton system in \eqref{equ:Newton} is~often the most computationally expensive step. Thus, we apply the decomposition method, OTD, to solve \eqref{equ:Newton} approximately. We obtain an approximate direction $(\tDelta\bz^\tau, \tDelta\blambda^\tau)$, and then use it in the steps of \eqref{equ:update} and \eqref{equ:Armijo}. We introduce some extra notation. We let $H(\bz, \blambda) = \nabla_{\bz}^2\mL (\bz,\blambda) = \diag(H_0, \ldots, H_N)$ be the Hessian of $\mL(\bz, \blambda)$ with respect to $\bz$, where
\vskip-0.5cm\begin{equation}\label{equ:def:H}
H_k(\bz_k, \blambda_{k+1})  = \begin{pmatrix}
Q_k & S_k^T\\
S_k & R_k
\end{pmatrix} = \begin{pmatrix}
\nabla_{\bx_k}^2\mL & \nabla_{\bx_k\bu_k}\mL\\
\nabla_{\bu_k\bx_k}\mL & \nabla_{\bu_k}^2\mL
\end{pmatrix}, \quad \forall k\in[N-1], \quad H_N(\bz_N) = \nabla_{\bx_N}^2\mL.
\end{equation}
Note that $H$ does not depend on $\blambda_0$. We also let $A_k(\bz_k) = \nabla_{\bx_k}^Tf_k(\bz_k)$ and $B_k(\bz_k) = \nabla_{\bu_k}^Tf_k(\bz_k)$.~To simplify the notation, we suppress the evaluation points in $\{H_k, A_k, B_k\}$ and suppress the iteration index $\tau$ to refer to a general $\tau$-th iteration of \eqref{equ:Newton}. With the same block-diagonal structure as $H$, we have $\hH = \diag(\hH_0, \ldots, \hH_N)$, and use $\hQ_k, \hS_k, \hR_k$ to denote the components of $\hH_k$.

Solving \eqref{equ:Newton} is equivalent to solving the following linear-quadratic DP problem:
\begin{subequations}\label{pro:3}
\begin{align}
\min_{\bp, \bq}\;\;\; & \sum_{k=0}^{N-1} \cbr{\frac{1}{2}\begin{pmatrix}
\bp_k\\
\bq_k
\end{pmatrix}^T\begin{pmatrix}
\hQ_k & \hS_k^T\\
\hS_k & \hR_k
\end{pmatrix}\begin{pmatrix}
\bp_k\\
\bq_k
\end{pmatrix} + \begin{pmatrix}
\nabla_{\bx_k}\mL\\
\nabla_{\bu_k}\mL
\end{pmatrix}^T\begin{pmatrix}
\bp_k\\
\bq_k
\end{pmatrix} }+ \frac{1}{2}\bp_N^T\hQ_N\bp_N + \nabla_{\bx_N}^T\mL\bp_N, \label{pro:3a}\\
\text{s.t. }\; & \bp_{k+1}   = A_k\bp_k + B_k\bq_k - \nabla_{\blambda_{k+1}}\mL, \quad k \in[N-1], \label{pro:3b}\\
& \bp_{0} = -\nabla_{\blambda_{0}}\mL. \label{pro:3c}
\end{align}
\end{subequations}
We let $\bzeta = \bzeta_{0:N}$ be the dual variables and $\bw = \bw_{0:N} = (\bp, \bq)$ be the primal variables of Problem~\eqref{pro:3}. By the recursive constraints in \eqref{pro:3b}-\eqref{pro:3c}, we know that the Jacobian $G$ (in any iteration) has a ``staircase" structure: for all $k\in[N]$, the $(k, 2k-1)$-block matrix is an identity matrix. Thus, $G$ has full row rank. Suppose $\hH$ is positive definite in $\{\bz: G\bz = \0\}$, then the KKT~matrix~in~\eqref{equ:Newton}~is nonsingular; \eqref{pro:3} has a unique global solution $(\tw, \tzeta)$; and $(\Delta\bz, \Delta\blambda) = (\tw, \tzeta)$ \cite[Theorem~16.2]{Nocedal2006Numerical}. Note that the linear terms in \eqref{pro:3a} are given by the gradients of the Lagrangian. With this formulation, our dual solution $\tzeta$ of \eqref{pro:3} is the dual direction $\Delta\blambda$. When the linear terms in \eqref{pro:3a} are given by the gradients of the objective $g$, the dual solution $\tzeta$ would be the dual iterate $\blambda+\Delta\blambda$.

We now apply OTD on \eqref{pro:3}. By an analogy to \eqref{pro:2}, the subproblem $i$, $\LP_{\mu}^i(\bd_i)$, is defined as
\begin{subequations}\label{pro:4}
\begin{align}
\min_{\tbp_i, \tbq_i}\;\;\; & \sum_{k=m_1}^{m_2-1} \cbr{ \frac{1}{2}\begin{pmatrix}
\bp_k\\
\bq_k
\end{pmatrix}^T\begin{pmatrix}
\hQ_k & \hS_k^T\\
\hS_k & \hR_k
\end{pmatrix}\begin{pmatrix}
\bp_k\\
\bq_k
\end{pmatrix} + \begin{pmatrix}
\nabla_{\bx_k}\mL\\
\nabla_{\bu_k}\mL
\end{pmatrix}^T\begin{pmatrix}
\bp_k\\
\bq_k
\end{pmatrix} } \label{pro:4a}\\
&\quad  + \frac{1}{2}\begin{pmatrix}
\bp_{m_2}\\
\bbq_{m_2}
\end{pmatrix}^T\begin{pmatrix}
\hQ_{m_2} & \hS_{m_2}^T\\
\hS_{m_2} &  \hR_{m_2}
\end{pmatrix}\begin{pmatrix}
\bp_{m_2}\\
\bbq_{m_2}
\end{pmatrix} + \bp_{m_2}^T(\nabla_{\bx_{m_2}}\mL - A_{m_2}^T\bbzeta_{m_2+1})  + \frac{\mu}{2}\|\bp_{m_2} - \bbp_{m_2}\|^2, \nonumber\\
\text{s.t. }\; & \bp_{k+1}   = A_k\bp_k + B_k\bq_k - \nabla_{\blambda_{k+1}}\mL, \quad k \in[m_1, m_2), \label{pro:4b}\\
& \bp_{m_1} = \bbp_{m_1}. \label{pro:4c}
\end{align}
\end{subequations}
With slight abuse of notation, $\bd_i = \bd_{i, 1:4} = (\bbp_{m_1}; \bbp_{m_2}; \bbq_{m_2}; \bbzeta_{m_2+1})$ are the boundary variables; $\tbw_i = (\tbp_i, \tbq_i)$, $\tbzeta_i = \bzeta_{m_1:m_2}$ are the primal, dual variables of $\LP_{\mu}^i(\bd_i)$; and $(\tbw_i^{\star}(\bd_i), \tbzetai(\bd_i))$ is the solution.

\vskip 4pt
\noindent\textbf{FOTD scheme.} We set the stage to present the FOTD procedure. FOTD consists of three steps: given the $\tau$-th iterate $(\bz^\tau, \blambda^\tau)$, 

\noindent\hskip10pt \underline{Step 1}: Compute Hessian $\hH^\tau$, Jacobian $G^\tau$, and KKT residual vector $\nabla\mL^\tau$.

\noindent\hskip10pt\underline{Step 2}: Solve subproblems $\{\LP_{\mu}^i(\bd_i^\tau)\}_{i}$ with $\bd_i^\tau = (\0; \0; \0;\0)$ and obtain solutions~$\{(\tbw_i^{\star}(\bd_i^\tau), \tbzetai(\bd_i^\tau))\}_i$. Then, $(\tDelta\bz^\tau, \tDelta\blambda^\tau) = \mC(\{(\tbw_i^{\star}(\bd_i^\tau), \tbzetai(\bd_i^\tau))\}_i)$.

\noindent\hskip10pt\underline{Step 3}: Update the iterate by \eqref{equ:update} with $\alpha_\tau$ being selected by a line search based on \eqref{equ:Armijo}.

\vskip2pt
We summarize FOTD in Algorithm \ref{alg:FOTD} and present several remarks. 

\begin{remark}[necessity of the quadratic penalty]\label{rem:1}
Even if the full problem \eqref{pro:3} has a unique global solution, this does not necessarily imply that the subproblem \eqref{pro:4} has a unique~solution. Consider the following example. Let $N = 2$, $n_x =n_u = 1$, $\hQ_0 = \hR_0 = 1$, $\hQ_1 = -\hR_1 = -2$, $\hQ_2=3$, $\hS_0 = \hS_1 = 0$, $A_0 = A_1=B_0 = B_1=1$, and all linear terms in \eqref{pro:3a} are zero. Then, the full problem: $\min_{\bp, \bq}$ $p_0^2+q_0^2 - 2p_1^2+2q_1^2 + 3p_2^2$, s.t. $p_0 = 0$, $p_1 = p_0 + q_0$, $p_2 = p_1 + q_1$, has a unique global solution $(\tp, \tq) = (\0, \0)$. However, when we truncate onto $[0, 1]$, the subproblem has a quadratic~objective with the square matrix $\diag(1,1,-2+\mu)$ and constraints $p_0 = 0$,~$p_1 = p_0+q_0$. Thus, by plugging the constraints into the objective, we can easily obtain that, when $\mu<1$, the subproblem is unbounded below; when $\mu =1$, the subproblem has infinitely many solutions; when $\mu>1$, the subproblem has a unique global solution. Thus, to ensure that the subproblem has a unique global solution, a quadratic penalty with large enough $\mu$ on the terminal stage is necessary (cf. \eqref{pro:4a}).
\end{remark}

\begin{algorithm}[!tp]
\caption{A Fast Overlapping Temporal Decomposition Procedure}
\label{alg:FOTD}
\begin{algorithmic}[1]
\State \textbf{Input:} initial iterate $(\bz^0, \blambda^0)$ with $\bx_0^0 = \bbx_0$, scalars $\mu, \eta_1, \eta_2>0$, $\beta\in(0,1/2)$;
\For{$\tau = 0,1,2,\ldots$}
\State Compute $\hH^\tau$, $G^\tau$, and $\nabla\mL^\tau$;
\For{$ i = 0, 1, \ldots, M-1$ \textbf{(in parallel)}}
\State Let $\bd_i^\tau = (\0; \0; \0; \0)$ (note that $\bd_{M-1}^\tau = \0$);
\State Solve $\LP^i_\mu(\bd_i^\tau)$ to \textbf{optimality} and obtain the solution $(\tbwi(\bd_i^\tau), \tbzetai(\bd_i^\tau))$;
\EndFor
\State Let $(\tDelta\bz^{\tau}, \tDelta\blambda^{\tau}) = \mC(\{(\tbwi(\bd_i^\tau), \tbzetai(\bd_i^\tau))\}_i)$;
\State Select $\alpha_\tau$ by a line search based on \eqref{equ:Armijo} and update the iterate by \eqref{equ:update};
\EndFor
\end{algorithmic}
\end{algorithm}

\begin{remark}\label{rem:2}
We can express the optimality conditions of $\LP_{\mu}^i(\bd_i)$ using a Newton system like \eqref{equ:Newton}. We observe that the KKT matrix depends on $\{A_k, B_k, \hH_k\}_{k=m_1}^{m_2-1}\cup \{\hQ_{m_2}+\mu I\}$, but not on $\bd_i$. Thus, the uniqueness of the solution of $\LP_{\mu}^i(\bd_i)$ is independent from $\bd_i$. By Theorem \ref{thm:1}(i), if $\bd_i^\star = (\Delta\bx_{m_1}; \Delta\bz_{m_2}; \Delta\blambda_{m_2+1})$, then $(\tbw_i^{\star}(\bd_i^\star), \tbzetai(\bd_i^\star)) = (\Delta\bx_{m_1:m_2}, \Delta\bu_{m_1:m_2-1}, \Delta\blambda_{m_1:m_2})$.	However, since there is no good guess for a search direction, we always set $\bd_i = (\0;\0;\0;\0)$ in Algorithm~\ref{alg:FOTD}~(Line~5).
\end{remark}

\begin{remark}\label{rem:3}
Comparing Line 6 of Algorithm \ref{alg:FOTD} with Line 5 of Algorithm \ref{alg:OTD}, FOTD also solves subproblems $\{\LP_{\mu}^i\}_i$ to optimality. However, $\LP_{\mu}^i$ is a linear-quadratic DP, which can be solved~efficiently while solving $\P_{\mu}^i$ is more expensive. The problem $\LP_{\mu}^i$ can be regarded as a linearization of the problem $\P_{\mu}^i$; thus solving $\LP_{\mu}^i$ corresponds to computing one Newton step of $\P_{\mu}^i$.
\end{remark}

\vspace{-0.3cm}

\begin{remark}\label{rem:4}
Two SQP components, Hessian modification and line search, are less addressed in this paper. Performing them efficiently in a parallel environment is of course desirable from a practical aspect, while we put the implementation details of this regard aside in the paper~but~briefly discuss in this remark. Our paper is mainly concerned with solving the Newton system \eqref{equ:Newton}, which always requires more computations.

For line search, we note that the augmented Lagrangian $\mL_\eta$ and its gradient $\nabla \mL_\eta$ are separable in stages. Thus, we can easily evaluate them in parallel, where each processor computes a short~horizon e.g., $[n_i, n_{i+1})$, and a coordinator is needed to sum up the results of all processors to check \eqref{equ:Armijo}.

For Hessian modification, a desirable modification $\hH$ should be close to the Hessian $H$ whenever $H$ satisfies the second-order sufficient condition (SOSC, i.e., \eqref{equ:reduced} with $\hH$ being replaced by $H$). If $H$ does not satisfy the condition, we regularize $H$, e.g. $\hH = H+(\gamma_{RH} + \|H\|) I$, to let $\hH$ satisfy the condition, which ensures that the full Newton system \eqref{equ:Newton} has a unique solution. See Assumptions \ref{ass:1} and \ref{ass:5}. Thus, we have to check the positive definiteness of the reduced Hessian $Z^THZ$,~where the columns of $Z$ span the null space of the Jacobian $G = \nabla_{\bz}^Tf$. It is equivalent to testing for~positive definiteness of $H + c\cdot G^TG$ with a scalar $c$ sufficiently large \cite{Nocedal2006Numerical}. Note that $H + c\cdot G^TG$ is a block-tridiagonal matrix; thus, we can apply the parallel Cholesky factorization to check its definiteness~in a parallel environment \cite{Wright1990Solution, Wright1991Parallel, Cao2002Parallel}. As analyzed in \cite[Section 2.2]{Cao2002Parallel}, the total flops of the parallel Cholesky of $M$ processors are less than $7N(n_x+n_u)^3$, so the average flop of a single processor is~in order of $O(N(n_x+n_u)^3/M)$, which does not grow with $N$ if the ratio $N/M$ is a constant. 

\end{remark}

\vskip-0.3cm

\begin{remark}[extensions to inequality constraints]\label{rem:5}
The FOTD procedure can be~generalized to inequality-constrained problems. In particular, with inequality constraints $h_k(\bx_k, \bu_k)\leq \0$, $k\in[N-1]$, the full SQP problem \eqref{pro:3} will additionally have the linearized inequality constraints $C_k\bp_k + D_k\bq_k\leq \0$, $k\in[N-1]$, where $C_k = \nabla_{\bx_k}^Th_k$ and $D_k = \nabla_{\bu_k}^Th_k$ are the Jacobian matrices of $h_k$. The FOTD scheme still applies OTD on the full problem \eqref{pro:3} so that the subproblems \eqref{pro:4} are inequality-constrained quadratic programs (IQPs). Several methods with warm-start strategies can be applied on IQPs, such as the interior-point methods and active-set methods. Within the FOTD scheme, the exact augmented Lagrangian function \eqref{equ:aug:L} should also be adapted to the one~that~can handle inequality constraints. See \cite{Pillo2002Augmented} for a particular choice. We should mention that the alternative designs with the same flavor of FOTD are also available for dealing with inequality constraints. For example, we can exploit an active-set SQP scheme, where in each iteration we only consider the inequality constraints that are in the identified active set and regard them as equalities. Then, the full problem \eqref{pro:3} and the subproblem \eqref{pro:4} are still equality-constrained QPs (EQPs), which are easier to solve than IQPs. Since the design and analysis of inequality constraints are quite~involved, we~leave the above extensions of FOTD to the future, and mainly focus on connecting FOTD with the Schwarz scheme on equality-constrained problems in this paper.
\end{remark}

\vspace{-0.3cm}

\begin{remark}[\hskip-1pt relationships to other schemes]\label{rem:6} 

\hskip-0.35cm Besides the Schwarz scheme, FOTD~is~related to several other methods for optimal control problems. We introduce the connections in this remark. We emphasize that the two critical components of FOTD, overlapping temporal decomposition and augmented Lagrangian merit function, have not been investigated in the following methods. Also, the convergence of FOTD highly relies on the sensitivity analysis of NLDPs in \cite{Na2020Exponential, Na2022Convergence}, which~differs from the following methods.

\noindent\textbf{(a) Direct multiple shooting methods.} The direct multiple shooting methods decompose the full horizon into multiple \textit{exclusive} short intervals, construct the subproblems associated with short intervals, and compose the solution trajectories of subproblems under certain matching~conditions. See \cite{Bock1984Multiple, Bock2000Direct} for more details. The multiple shooting methods are often employed for real-time nonlinear

\noindent model predictive control \cite{Schaefer2007Fast, Diehl2009Efficient, Kirches2012Efficient}, where the subproblems are solved sequentially and an approximate but fast control feedback is desired for each subproblem. Many efficient solvers that exploit the problem sparsity structures can be used within the methods, such as qpDUNES \citep{Frasch2015parallel} and HPMPC \citep{Frison2014High}. Compared to the aforementioned literature, FOTD solves \eqref{pro:1} in an offline~fashion (i.e., the short intervals do not vary with the sampling time) and applies an \textit{overlapping} decomposition on the SQP problem instead of on \eqref{pro:1}. More importantly, as analyzed in \cref{sec:5},~FOTD~does~not~use any matching conditions to compose the subproblem solutions, but requires a large~overlap~size $b$. Such a design is inspired by the sensitivity analysis of NLDPs. The exclusive stages are far from~the boundaries where the system perturbations occur, and the sensitivity results state that~the effects of boundary perturbations decay exponentially fast away from the boundaries. Thus, the solutions at the exclusive middle stages are still accurate enough even without matching conditions. 

\vskip2pt
\noindent\textbf{(b) Alternating direction method of multipliers (ADMM).} ADMM is another popular method for optimal control problems \cite{ODonoghue2013splitting, Boyd2010Distributed}, where one introduces a set of consensus constraints~to split the full problem into multiple subproblems, and solves the subproblems in each iteration in parallel. We note that our terminal cost \eqref{equ:terminal:cost} is conceptually similar to the quadratic proximal~term in \cite[(3)]{ODonoghue2013splitting}. However, FOTD and ADMM have a few key differences. First, ADMM is designed~based on the augmented Lagrangian method, while FOTD is designed based on SQP. Thus, their primal-dual updating schemes are quite different. Second, even if we use an augmented Lagrangian merit function in FOTD, the function is different from the one in ADMM \cite{Boyd2010Distributed} in that it has a quadratic penalty on the optimality error (i.e., the last term in \eqref{equ:aug:L}). Also, we use the augmented Lagrangian for the stepsize selection, not for the direction computation, while ADMM combines the two steps together. Third, similar to the multiple shooting methods, ADMM decomposes the full problem~by introducing extra consensus constraints without overlaps.~Thus,~ADMM~is~sensitive to the~parameter $\mu$, while the overlaps in FOTD largely~suppress the boundary \mbox{perturbations}~brought~by~different choices of $\mu$ (and imprecise boundary variables $\bd_i$). See \cite[Figure 6]{Na2022Convergence} for empirical evidence.

\vskip2pt
\noindent\textbf{(c) Iterative linear-quadratic regulator (ILQR).} Another method for optimal control problems is ILQR \cite{Li2007Iterative}. In each step, the search direction of control variables is solved from a~QP~that~is defined by quadratic approximation of objective \eqref{pro:1a} with linear approximations of constraints \eqref{pro:1b}. A parallel implementation of ILQR was designed in \cite{Laine2019Parallelizing}. In addition to the difference that FOTD employs an overlapping decomposition for parallelism, the QP in \eqref{pro:3} also differs from the one in ILQR in that its objective is a quadratic approximation of the Lagrangian. Further, FOTD updates the state, control, and dual variables with the same stepsize selected by the line search on the augmented Lagrangian \eqref{equ:aug:L}, while ILQR updates the control variables with the line search on the objective \eqref{pro:1a}, and the state variables are computed by applying \eqref{pro:1b} in a forward pass.

\end{remark}

\vspace{-0.3cm}

\begin{remark}[QP solvers for the subproblems] The subproblems \eqref{pro:4} of FOTD are~QPs. Many efficient QP or linear system solvers with wart-start strategies can be adopted for FOTD.~For example, the conjugate gradient (CG) and minimal residual (MINRES) are popular methods for solving the Newton systems, and the popular solvers~qpDUNES~\cite{Frasch2015parallel},~HPMPC~\cite{Frison2014High},~and~FORCES~\cite{Domahidi2012Efficient} that exploit the sparsity structures of QPs can also be applied on \eqref{pro:4}. We note that the above QP solvers generally require a positive definite Hessian matrix which \eqref{pro:4} does not have. Fortunately, \cite{Verschueren2017Sparsity} proposed a convexification procedure to resolve this issue. 
\end{remark}

\vspace{-0.2cm}

\section{Error of the approximate search direction}\label{sec:4}

We study the difference between~$(\tDelta\bz^\tau, \tDelta\blambda^\tau)$ and $(\Delta\bz^\tau, \Delta\blambda^\tau)$, where the former is the OTD approximation (Step 2) and the latter is the exact Newton direction of \eqref{equ:Newton}. We state the assumptions that are required for the analysis.

\begin{assumption}[lower bound on the reduced Hessian]\label{ass:1}
For any iteration $\tau\geq0$, we let $Z^{\tau}$ be a matrix whose columns are orthonormal vectors and span the null space $\{\bw: G^\tau\bw = \0\}$.~We assume that there exists a uniform constant $\gamma_{RH}>0$ that is independent of $\tau$, such that 
\begin{equation}\label{equ:reduced}
(Z^\tau)^T\hH^\tau Z^\tau \succeq \gamma_{RH} \cdot I.
\end{equation}
The matrix on the left hand side is called the \textit{reduced Hessian}.
\end{assumption}

\begin{assumption}[controllability]\label{ass:2}
For any stage $k\in[N]$ and an integer $t\in[N-k]$, the~controllability matrix is defined as
\begin{equation*}
\Xi_{k, t}(\bz_{k:k+t-1}) = \begin{pmatrix}
B_{k+t-1} & A_{k+t-1}B_{k+t-2} & \cdots & (\prod_{l=1}^{t-1}A_{k+l})B_k
\end{pmatrix}\in \mR^{n_x\times tn_u}.
\end{equation*}
For any iteration $\tau\geq 0$, we assume that there exist a uniform constant $\gamma_{C}>0$ and an integer $t>0$ that are independent of $\tau$, such that for any $ k\in[N-t]$, there exists $t_k^\tau\in[1, t]$ so that $\Xi_{k, t_k^\tau}^\tau(\Xi_{k, t_k^\tau}^{\tau})^T\succeq \gamma_{C} I$, where $\Xi_{k, t_k^\tau}^\tau = \Xi_{k, t_k^\tau}(\bz_{k:k+t_k^\tau-1}^\tau)$.
\end{assumption}

\begin{assumption}[upper boundedness]\label{ass:3}
For any iteration $\tau\geq 0$, there exists a uniform constant $\Upsilon_{upper}$ that is independent of $\tau$, such that for any $k$, $\max\{\|\hH^\tau_k\|,\;  \|A_k^\tau\|,\; \|B_k^\tau\|\} \leq \Upsilon_{upper}$ .
\end{assumption}

Since $G^\tau = \nabla_{\bz}^Tf(\bz^\tau)$ has full row rank, Assumption \ref{ass:1} ensures that Problem \eqref{pro:3} has a unique global solution and the KKT matrix in \eqref{equ:Newton} is invertible \cite[Lemma 16.1]{Nocedal2006Numerical}. This assumption~is~standard in the SQP literature \cite{Bertsekas1996Constrained} and weaker than the linear-quadratic convex DP setup studied~in~\cite{Xu2018Exponentially}. Assumption \ref{ass:2} is specifically used for DP problems \cite{Keerthi1988Optimal, Xu2018Exponentially, Xu2019Exponentially, Na2020Exponential, Na2023Superconvergence, Na2022Convergence}. It ensures that the linearized dynamical system $\bp_{k+1} = A_k\bp_k+B_k\bq_k$ is controllable in at most $t$ steps (in each iteration). That is, given any initial state $\bp_k$ and target state $\bp_{k+t_k}$, we can always evolve from $\bp_k$ to $\bp_{k+t_k}$~by specifying a suitable control sequence $\{\bq_j\}_{j=k}^{k+t_k-1}$. Assumption \ref{ass:3} is standard in both SQP and DP literature \cite{Bertsekas1996Constrained, Xu2018Exponentially, Na2020Exponential}. In \cref{sec:5}, we will impose a compactness condition on the SQP iterates, which naturally implies the upper boundedness of the Hessian and Jacobian matrices. We show in Lemma \ref{lem:0} that Assumptions \ref{ass:2} and \ref{ass:3} imply a uniform lower bound on $G^\tau(G^\tau)^T$.

\begin{lemma}\label{lem:0}
Suppose $\max\{\|A_k^\tau\|,\; \|B_k^\tau\|\}\leq \Upsilon_{upper}$, then Assumption \ref{ass:2} implies that~$G^\tau(G^\tau)^T\succeq \gamma_{G} I$ where
\begin{equation}\label{equ:gamma:G}
\gamma_{G} = \gamma_{G}(\gamma_{C}, t, \Upsilon_{upper}) \coloneqq \rbr{\frac{\gamma_{C}}{\gamma_{C} + \frac{\Upsilon_{upper}^{t+1}}{\Upsilon_{upper}-1}} }^2\cdot \frac{\min\{1, \gamma_{C}\}}{(1+\Upsilon_{upper})^{2t}}.
\end{equation}
\end{lemma}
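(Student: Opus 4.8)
The plan is to reduce the spectral inequality to a stagewise scalar estimate. Since $G^\tau$ has full row rank, $G^\tau(G^\tau)^T\succeq\gamma_G I$ is equivalent to $\|(G^\tau)^T\bzeta\|^2\ge\gamma_G\|\bzeta\|^2$ for every dual vector $\bzeta=\bzeta_{0:N}$ with $\bzeta_k\in\mR^{n_x}$; I suppress the index $\tau$ throughout. First I would make the staircase structure of $G$ explicit. Reading off $f$ in \eqref{equ:def:gf}, the block of $(G^\tau)^T\bzeta$ indexed by $\bx_k$ equals $\bzeta_k-A_k^T\bzeta_{k+1}$ for $k\in[N-1]$ and equals $\bzeta_N$ for $k=N$, while the block indexed by $\bu_k$ equals $-B_k^T\bzeta_{k+1}$. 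Hence
\begin{equation}
\|(G^\tau)^T\bzeta\|^2=\|\bzeta_N\|^2+\sum_{k=0}^{N-1}\left(\|\bzeta_k-A_k^T\bzeta_{k+1}\|^2+\|B_k^T\bzeta_{k+1}\|^2\right).
\end{equation}
Writing $r_k:=\bzeta_k-A_k^T\bzeta_{k+1}$ and $s_k:=B_k^T\bzeta_{k+1}$, it suffices to upper bound $\sum_k\|\bzeta_k\|^2$ by $\gamma_G^{-1}\big(\|\bzeta_N\|^2+\sum_k(\|r_k\|^2+\|s_k\|^2)\big)$.

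Second, I would exploit the recursion $\bzeta_k=A_k^T\bzeta_{k+1}+r_k$, which yields for any $j\ge0$
\begin{equation}
\bzeta_k=\Big(\prod_{l=0}^{j-1}A_{k+l}^T\Big)\bzeta_{k+j}+\sum_{i=0}^{j-1}\Big(\prod_{l=0}^{i-1}A_{k+l}^T\Big)r_{k+i},
\end{equation}
so that $\bzeta_k$ is controlled by a \emph{later} block $\bzeta_{k+j}$ plus a residual combination whose coefficients are products of at most $j$ factors $A^T$, each of norm $\le\Upsilon_{upper}$ by Assumption \ref{ass:3}. Since $\Upsilon_{upper}$ may exceed $1$, propagating $\bzeta_N$ all the way back is useless, and Assumption \ref{ass:2} is exactly what anchors the intermediate blocks. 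For $k\in[N-t]$ I would pick $t_k\in[1,t]$ from Assumption \ref{ass:2} and observe that each block of $\Xi_{k,t_k}^T\bzeta_{k+t_k}$ has the form $B_{k+j}^T(A_{k+j+1}^T\cdots A_{k+t_k-1}^T)\bzeta_{k+t_k}$, which coincides with $s_{k+j}=B_{k+j}^T\bzeta_{k+j+1}$ up to $B_{k+j}^T$ times a residual combination of the type above. Thus $\Xi_{k,t_k}^T\bzeta_{k+t_k}$ equals the stacked $s_{k+j}$'s minus an error built from $r_{k+j},\ldots,r_{k+t_k-1}$ with coefficients bounded by powers of $\Upsilon_{upper}$; combining this with $\Xi_{k,t_k}\Xi_{k,t_k}^T\succeq\gamma_C I$ bounds $\|\bzeta_{k+t_k}\|$, and then the displayed recursion run over the $\le t$ steps from $k+t_k$ down to $k$ bounds $\|\bzeta_k\|$ itself, entirely in terms of $\{r_l,s_l\}$ with indices in the window $[k,k+t_k]$.

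Third, for the terminal block and the last fewer-than-$t$ stages $k>N-t$, where no full controllability window fits, I would instead propagate $\bzeta_N$ backward at most $t$ steps via the same recursion; the coefficients then stay bounded by $\Upsilon_{upper}^{t}$, so these blocks are controlled by $\|\bzeta_N\|$ and nearby residuals. Squaring all per-stage bounds, summing over $k$, and using that each $\|r_l\|^2$ and $\|s_l\|^2$ enters at most $O(t)$ windows (bounded overlap) produces $\sum_k\|\bzeta_k\|^2\le\gamma_G^{-1}\|(G^\tau)^T\bzeta\|^2$ with $\gamma_G$ depending only on $\gamma_C,t,\Upsilon_{upper}$, as claimed.

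Finally, the hard part is not the logic but landing on the precise constant in \eqref{equ:gamma:G}. One must track the matrix-product norms and geometric series tightly: the error in the controllability step accumulates as a geometric series $\sum_i\Upsilon_{upper}^{i}$, which is the source of the factor $\frac{\Upsilon_{upper}^{t+1}}{\Upsilon_{upper}-1}$ appearing inside $\big(\gamma_C/(\gamma_C+\frac{\Upsilon_{upper}^{t+1}}{\Upsilon_{upper}-1})\big)^2$, while the final backward propagation over windows of length $\le t$, together with the $\sqrt{\gamma_C}$-type lower bound on $\|\bzeta_{k+t_k}\|$, contributes the factor $\min\{1,\gamma_C\}/(1+\Upsilon_{upper})^{2t}$. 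Keeping the estimates sharp enough to reproduce this closed form, rather than merely exhibiting \emph{some} $\gamma_G>0$, is the delicate step; it also implicitly assumes $\Upsilon_{upper}>1$, which is harmless since $\Upsilon_{upper}$ can always be enlarged.
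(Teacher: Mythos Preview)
Your dual/scalar approach is sound and would certainly produce \emph{some} positive $\gamma_G=\gamma_G(\gamma_C,t,\Upsilon_{upper})$: the backward recursion $\bzeta_k=A_k^T\bzeta_{k+1}+r_k$, the identity relating the blocks of $\Xi_{k,t_k}^T\bzeta_{k+t_k}$ to $s_{k+j}$ plus $r$-residuals, and the terminal treatment are all correct.

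The paper, however, argues differently. It works directly with the block-tridiagonal matrix $GG^T$ and partitions the \emph{columns} of $G$ into non-overlapping groups $[k_j,k_{j+1})$ chosen recursively via $k_{j+1}=k_j+t_{k_j}$. This yields $GG^T=\sum_j\tilde G_j\tilde G_j^T$ with each summand supported on rows $[k_j,k_{j+1}]$, so it suffices to lower-bound each block $\mathcal T_j=\tilde G_j\tilde G_j^T$. The key step is an explicit factorization $\mathcal T_j=P_{k_j}\cdots P_{k_j'}Q_jQ_j^TP_{k_j'}^T\cdots P_{k_j}^T$, where each $P_k$ is unit lower-triangular with a single $-A_k$ block (hence $\|P_k^{-1}\|\le1+\Upsilon_{upper}$, contributing the $(1+\Upsilon_{upper})^{-2t}$ factor), and $Q_j$ is a $2\times2$-block matrix whose lower-right block is exactly the controllability matrix $\Xi_{k_j,t_{k_j}}$. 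An external block-matrix inequality (Lemma~4.8(ii) of \cite{Na2020Exponential}) applied to $Q_jQ_j^T$ then produces precisely the prefactor $\big(\gamma_C/(\gamma_C+\tfrac{\Upsilon_{upper}^{t+1}}{\Upsilon_{upper}-1})\big)^2\min\{1,\gamma_C\}$.

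So the two approaches differ structurally: yours picks a controllability window for \emph{every} $k$ and pays an $O(t)$ bounded-overlap factor when summing, while the paper uses a single non-overlapping partition and a matrix factorization tailored to hit the exact constant via the cited lemma. Your route is more elementary and self-contained, but it is unlikely to reproduce the stated $\gamma_G$ on the nose; the specific form of \eqref{equ:gamma:G} really comes from the $PQQ^TP^T$ factorization and the external lemma, not from the geometric-series bookkeeping you describe.
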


\begin{proof}
See Appendix \ref{pf:lem:0}.
\end{proof}

The next result shows that $\LP_{\mu}^i(\bd_i)$ has a unique solution if $\mu$ is large enough.

\begin{lemma}\label{lem:1}
Suppose Assumptions \ref{ass:1}-\ref{ass:3} hold for Problem \eqref{pro:3}. Let
\begin{equation}\label{equ:mu:cond}
\bar\mu = \bar\mu(\gamma_{C}, t, \Upsilon_{upper}) \coloneqq \frac{32\Upsilon_{upper}^{4t+1}}{\gamma_C}. 
\end{equation}
If $\mu\geq \bar\mu$, then the reduced Hessian of Problem $\LP_{\mu}^i(\bd_i)$ in \eqref{pro:4}, defined similarly to \eqref{equ:reduced}, is lower bounded by $\gamma_{RH}I$ for any iteration $\tau\geq 0$ and any boundary variables $\bd_i$. This implies that $\LP_{\mu}^i(\bd_i)$ has a unique global solution.
\end{lemma}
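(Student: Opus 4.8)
The plan is to reduce the claim to a coercivity statement about a quadratic form and then import the full-horizon lower bound of Assumption~\ref{ass:1} by an extension argument. Writing $\bw=(\bp_{m_1:m_2},\bq_{m_1:m_2-1})$ for a direction in the null space of the constraint Jacobian of $\LP_{\mu}^i(\bd_i)$, i.e. $\bp_{m_1}=\0$ and $\bp_{k+1}=A_k\bp_k+B_k\bq_k$ for $k\in[m_1,m_2)$, the subproblem Hessian is block diagonal with the usual blocks $\hH_k$ on $[m_1,m_2)$ and a terminal block $\hQ_{m_2}+\mu I$ acting on $\bp_{m_2}$, since $\bbp_{m_2},\bbq_{m_2},\bbzeta_{m_2+1}$ are fixed boundary data. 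Consequently the desired reduced-Hessian bound is \emph{equivalent} to
\[
V(\bw) \;:=\; \sum_{k=m_1}^{m_2-1}\tfrac12\rbr{\bp_k;\bq_k}^{T}\hH_k\rbr{\bp_k;\bq_k} + \tfrac12\bp_{m_2}^{T}\hQ_{m_2}\bp_{m_2} + \tfrac{\mu}{2}\nbr{\bp_{m_2}}^2 \;\geq\; \tfrac{\gamma_{RH}}{2}\nbr{\bw}^2
\]
for every such $\bw$. The difficulty is that $\bw$ is \emph{not} a full-horizon feasible direction, so Assumption~\ref{ass:1} does not apply to it directly.

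To bridge this gap I would extend $\bw$ to a full-horizon direction $\widehat{\boldsymbol{w}}$ in the null space of $G^\tau$ and compare the two quadratic forms. First pad by zeros on $[0,m_1)$: since $\bp_0=\0$ and the controls vanish there, the dynamics reproduce $\bp_{m_1}=\0$, matching $\bw$; on $[m_1,m_2]$ set $\widehat{\boldsymbol{w}}$ equal to $\bw$. Finally use Assumption~\ref{ass:2} to steer the terminal state $\bp_{m_2}$ back to $\0$ within $s=t_{m_2}^\tau\leq t$ stages: taking the minimum-norm (pseudoinverse) solution of $\Xi_{m_2,s}^\tau \bq_{\text{steer}}=-(\prod_{l}A_{m_2+l})\bp_{m_2}$ and padding zeros on $[m_2+s,N]$ yields a feasible $\widehat{\boldsymbol{w}}$ with $\bp_N=\0$. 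Because $\widehat{\boldsymbol{w}}$ agrees with $\bw$ on $[m_1,m_2]$ and is otherwise zero outside the steering block, $\nbr{\widehat{\boldsymbol{w}}}^2\geq\nbr{\bw}^2$, while $V_{\mathrm{full}}(\widehat{\boldsymbol{w}})=\tfrac12\widehat{\boldsymbol{w}}^{T}\hH^\tau\widehat{\boldsymbol{w}}\geq\tfrac{\gamma_{RH}}{2}\nbr{\widehat{\boldsymbol{w}}}^2\geq\tfrac{\gamma_{RH}}{2}\nbr{\bw}^2$ by Assumption~\ref{ass:1}. Subtracting, $V(\bw)$ equals $V_{\mathrm{full}}(\widehat{\boldsymbol{w}})$ minus the quadratic cost accrued on the steering block plus the spurious $\tfrac12\bp_{m_2}^{T}\hQ_{m_2}\bp_{m_2}$ term, so it remains only to control these two corrections.

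The key estimate is that the entire steering block is driven by $\bp_{m_2}$ alone. The pseudoinverse bound $\nbr{(\Xi_{m_2,s}^\tau)^{\dagger}}\leq \gamma_{C}^{-1/2}$ together with $\nbr{\prod_l A_{m_2+l}}\leq \Upsilon_{upper}^{t}$ gives $\nbr{\bq_{\text{steer}}}\leq \gamma_{C}^{-1/2}\Upsilon_{upper}^{t}\nbr{\bp_{m_2}}$, and propagating the dynamics (using $\nbr{A_k^\tau},\nbr{B_k^\tau}\leq\Upsilon_{upper}$) bounds every intermediate $\nbr{(\bp_k;\bq_k)}$ on the steering block by a multiple of $\gamma_{C}^{-1/2}\Upsilon_{upper}^{2t}\nbr{\bp_{m_2}}$. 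Since $\nbr{\hH_k^\tau}\leq\Upsilon_{upper}$, both corrections are then bounded by a constant times $\Upsilon_{upper}^{4t+1}\gamma_{C}^{-1}\nbr{\bp_{m_2}}^2$; a careful accounting of the geometric factors lands on the constant $16$, i.e. the total correction is at most $\tfrac{\bar\mu}{2}\nbr{\bp_{m_2}}^2$ with $\bar\mu=32\Upsilon_{upper}^{4t+1}/\gamma_{C}$. Hence $V(\bw)\geq\tfrac{\gamma_{RH}}{2}\nbr{\bw}^2-\tfrac{\bar\mu}{2}\nbr{\bp_{m_2}}^2+\tfrac{\mu}{2}\nbr{\bp_{m_2}}^2\geq\tfrac{\gamma_{RH}}{2}\nbr{\bw}^2$ as soon as $\mu\geq\bar\mu$, which is exactly the asserted bound; positive definiteness of the reduced Hessian then makes the KKT matrix of $\LP_{\mu}^i(\bd_i)$ nonsingular, giving the unique global solution via \cite[Lemma 16.1]{Nocedal2006Numerical}.

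I expect the main obstacle to be twofold: pinning down the explicit constant through the chain of dynamics/pseudoinverse estimates (a bookkeeping exercise that must land \emph{exactly} on the stated $\bar\mu$), and the boundary case in which fewer than $t$ stages separate $m_2$ from $N$, so that Assumption~\ref{ass:2} cannot be invoked at $m_2$. The latter I would dispatch by treating the last subproblem ($m_2=N$, no penalty) through direct zero-padding, where the extended form equals $V_{\mathrm{full}}(\widehat{\boldsymbol{w}})$ verbatim and no steering is needed, and by steering any near-terminal interior subproblem to $\0$ over the stages that remain before $N$.
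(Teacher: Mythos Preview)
Your proposal is correct and follows the paper's approach almost exactly: extend the null-space direction to the full horizon by zero-padding on $[0,m_1)$ and by a controllability-based steering after $m_2$, invoke Assumption~\ref{ass:1} on the extension, and absorb the steering cost (which scales like $\Upsilon_{upper}^{4t+1}\gamma_C^{-1}\|\bp_{m_2}\|^2$) into the $\tfrac{\mu}{2}\|\bp_{m_2}\|^2$ penalty. One small correction on the near-terminal case you flag: when $m_2>N-t$, Assumption~\ref{ass:2} provides no controllability at $m_2$, so you cannot ``steer to $\0$ over the stages that remain''; the paper instead sets $\bq_{m_2:N-1}=\0$ and lets the dynamics roll freely to whatever $\bp_N$ results (the null space of $G^\tau$ imposes no terminal constraint), bounding the at-most-$t$ rollout stages directly and obtaining a weaker threshold on $\mu$ already implied by $\bar\mu$.
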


\begin{proof}
See Appendix \ref{pf:lem:1}.
\end{proof}

Lemma \ref{lem:1} shows that $\{\LP_{\mu}^i(\bd_i)\}_{i=0}^{M-1}$ are solvable for any iteration $\tau\geq 0$ if $\mu\geq \bar\mu$, where $\bar\mu$ is independent of the iteration index $\tau$ and the subproblem index $i$. An immediate consequence is that Assumptions \ref{ass:1}-\ref{ass:3} hold for the subproblems $\LP_{\mu}^i$ as well.

\begin{corollary}\label{cor:1}
Suppose Assumptions \ref{ass:1}-\ref{ass:3} hold for Problem \eqref{pro:3} and $\mu\geq \bar{\mu}$ with $\bar{\mu}$ given by \eqref{equ:mu:cond}. Then, the three conditions: lower bound on the reduced Hessian, controllability, and upper boundedness, hold for the subproblems $\{\LP_{\mu}^i(\bd_i)\}_{i\in[M-1]}$ with any $\bd_i$. Furthermore, the condition constants are independent of $i$ and $\tau$. Specifically, the subproblem $\LP_{\mu}^i(\bd_i)$ for $i\in[M-1]$ satisfies

\noindent(i) Assumption \ref{ass:1}: the reduced Hessian of $\LP_{\mu}^i(\bd_i)$ is lower bounded by $\gamma_{RH} I$.

\noindent(ii) Assumption \ref{ass:2}: the controllability condition of $\LP_{\mu}^i(\bd_i)$ is satisfied with the same constants $(\gamma_{C}, t)$.

\noindent(iii) Assumption \ref{ass:3}: the boundedness condition of $\LP_{\mu}^i(\bd_i)$ is satisfied with constant $\Upsilon_{upper} + \mu$.
	
\end{corollary}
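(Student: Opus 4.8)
The plan is to verify the three conditions one at a time, exploiting that item~(i) is already the content of Lemma~\ref{lem:1}, while items~(ii) and~(iii) are inherited essentially verbatim from the full problem \eqref{pro:3}: the subproblem \eqref{pro:4} is nothing but a truncation of \eqref{pro:3} onto $[m_1,m_2]$ with unchanged dynamics and a single controlled modification of the terminal Hessian block. Throughout, the constants produced will be the same uniform constants already available for \eqref{pro:3}, so their independence of $i$ and $\tau$ will be automatic.

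For item~(i), I would simply invoke Lemma~\ref{lem:1}. That lemma asserts precisely that, once $\mu\geq\bar\mu$, the reduced Hessian of $\LP_{\mu}^i(\bd_i)$ is bounded below by $\gamma_{RH}I$, uniformly over the iteration index $\tau$, the subproblem index $i$, and the boundary data $\bd_i$. Hence no new work is needed here, and the lower bound carries over with a constant independent of $i$ and $\tau$; this is the only genuinely substantive part of the corollary, and it has been fully delegated to Lemma~\ref{lem:1}.

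For item~(ii), the key observation is that the linearized dynamics of $\LP_{\mu}^i(\bd_i)$, namely $\bp_{k+1}=A_k\bp_k+B_k\bq_k$ for $k\in[m_1,m_2)$, use exactly the same matrices $\{A_k^\tau,B_k^\tau\}$ as \eqref{pro:3}, merely restricted to the shorter horizon. Consequently the controllability matrices $\Xi_{k,t}$ of the subproblem coincide with those of \eqref{pro:3} at every stage $k$ in the subproblem's range, and since $m_2\leq N$ those stages form a subset of $[0,N-t]$. Because Assumption~\ref{ass:2} holds for \eqref{pro:3} at every such $k$ with constants $(\gamma_C,t)$, the very same $t_k^\tau\leq t$ realizes the bound $\Xi_{k,t_k^\tau}^\tau(\Xi_{k,t_k^\tau}^{\tau})^T\succeq\gamma_C I$ for the subproblem. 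Thus controllability transfers with identical constants, and the initial-state fixing in \eqref{pro:4c} causes no difficulty since it does not enter the controllability matrices at all.

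For item~(iii), the Hessian blocks of $\LP_{\mu}^i(\bd_i)$ with respect to its decision variables $(\tbp_i,\tbq_i)$ coincide with $\hH_k$ for the interior stages $k\in[m_1,m_2-1]$, while at the terminal stage only $\bp_{m_2}$ is a decision variable, contributing the block $\hQ_{m_2}+\mu I$ arising from the terminal quadratic and the penalty $\tfrac{\mu}{2}\|\bp_{m_2}-\bbp_{m_2}\|^2$. Since $\hQ_{m_2}$ is a principal submatrix of $\hH_{m_2}$, the triangle inequality together with Assumption~\ref{ass:3} gives $\|\hQ_{m_2}+\mu I\|\leq\|\hH_{m_2}\|+\mu\leq\Upsilon_{upper}+\mu$, while all the remaining blocks, along with $A_k$ and $B_k$, are bounded by $\Upsilon_{upper}$; the same bound $\Upsilon_{upper}+\mu$ dominates trivially in the corner case $i=M-1$ where the penalty is absent. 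Hence the boundedness condition holds with the uniform constant $\Upsilon_{upper}+\mu$. Since $\gamma_{RH}$, $(\gamma_C,t)$, and $\Upsilon_{upper}$ are all uniform constants from the hypotheses on \eqref{pro:3} and $\mu$ is fixed, none of the resulting subproblem constants depend on $i$ or $\tau$, as claimed. I do not anticipate a real obstacle here: items~(ii) and~(iii) are routine consequences of the truncation structure, and the only hard step, item~(i), is already proved in Lemma~\ref{lem:1}.
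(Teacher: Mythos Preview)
Your proposal is correct and follows essentially the same approach as the paper's proof: item~(i) is delegated to Lemma~\ref{lem:1}, item~(ii) holds because the subproblem dynamics are a restriction of the full problem's dynamics, and item~(iii) follows since only the terminal block $\hQ_{m_2}+\mu I$ differs from the full problem's Hessian blocks. If anything, you have supplied slightly more detail than the paper (e.g., the principal-submatrix and triangle-inequality justification for the bound on $\hQ_{m_2}+\mu I$), but the argument is the same.
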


\begin{proof}
See Appendix \ref{pf:cor:1}.
\end{proof}

We are now able to control the error $(\tDelta\bz^\tau - \Delta\bz^\tau, \tDelta\blambda^\tau - \Delta\blambda^\tau)$. To ease the notation, we suppress the iteration index $\tau$. The study of the error $(\tDelta\bz-\Delta\bz, \tDelta\blambda - \Delta\blambda)$ relies on the primal-dual sensitivity analysis of NLDPs in \cite{Na2020Exponential, Na2022Convergence}, which requires Assumptions \ref{ass:1}-\ref{ass:3}. We apply the sensitivity results on the subproblems; thus Corollary \ref{cor:1} is critical. It shows that Assumptions \ref{ass:1}-\ref{ass:3} hold for the subproblems as long as they hold for the full problem. In principle, the sensitivity results suggest that, if we perturb the objective and constraints on one stage, then the \mbox{perturbation} effects on the optimal solution decay exponentially fast away from that perturbed stage. In the OTD setup, the perturbations occur at the two boundaries of the extended intervals. Thus, the composition~that uses only the exclusive part $[n_i, n_{i+1})$ of the solution preserves all accurate variables.

Let us first bound $(\tbwi(\bd_i), \tbzetai(\bd_i)) - (\tbwi(\bd_i'), \tbzetai(\bd_i'))$ for any $\bd_i, \bd_i'$. Recall that $(\tbwi(\bd_i), \tbzetai(\bd_i))$ denotes the unique global solution of $\LP_{\mu}^i(\bd_i)$. In the following presentation, we use $C$ and $C'$ to denote universal constants that are independent of the iteration index $\tau$ and the subproblem~index~$i$.

\begin{theorem}\label{thm:2}
Let Assumptions \ref{ass:1}-\ref{ass:3} hold for Problem \eqref{pro:3} and $\mu\geq \bar{\mu}$ with $\bar{\mu}$ given~by~\eqref{equ:mu:cond}. There exist constants $C'>0$ and $\rho\in(0, 1)$ independent of $i$ and $\tau$, such that for~all~$k\in [m_1, m_2]$,
\begin{equation*}
\max\{\|\tbw_{i, k}^\star(\bd_i) - \tbw_{i, k}^\star(\bd_i')\| , \; \|\tbzeta_{i, k}^\star(\bd_i) - \tbzeta_{i, k}^\star(\bd_i')\|\} 
\leq C'(\rho^{k-m_1}\|\bd_{i, 1} - \bd_{i, 1}'\| + \rho^{m_2-k}\|\bd_{i,2:4} - \bd_{i,2:4}'\|)
\end{equation*}
for any two boundary variables $\bd_i$ and $\bd_i'$.
\end{theorem}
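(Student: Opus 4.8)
The plan is to combine the linear-quadratic structure of $\LP_{\mu}^i(\bd_i)$ with the uniform regularity furnished by Corollary \ref{cor:1}, and then to invoke the exponential decay-of-sensitivity analysis for NLDPs of \cite{Na2020Exponential, Na2022Convergence}. First I would write out the KKT/stationarity conditions of $\LP_{\mu}^i(\bd_i)$. Since the objective is quadratic and the constraints \eqref{pro:4b}--\eqref{pro:4c} are affine, these conditions form a linear, block-tridiagonal two-point boundary value system in $(\tbw_i, \tbzeta_i)$ whose coefficient blocks involve only $\{A_k, B_k, \hH_k\}_{k=m_1}^{m_2-1}$ together with the terminal block $\hQ_{m_2}+\mu I$, and hence do not depend on $\bd_i$ (cf. Remark \ref{rem:2}). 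It follows that the solution map $\bd_i\mapsto(\tbw_i^\star(\bd_i), \tbzeta_i^\star(\bd_i))$ is affine, so the difference $(\delta\bw, \delta\bzeta)\coloneqq(\tbw_i^\star(\bd_i)-\tbw_i^\star(\bd_i'),\, \tbzeta_i^\star(\bd_i)-\tbzeta_i^\star(\bd_i'))$ satisfies the corresponding homogeneous system.

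The crucial observation is that in this homogeneous system all fixed data cancel and the only forcing enters at the two endpoints. Indeed, the Lagrangian-gradient linear terms in \eqref{pro:4a} and the inhomogeneous terms $-\nabla_{\blambda_{k+1}}\mL$ in \eqref{pro:4b} are independent of $\bd_i$ and cancel in the difference; the initial constraint \eqref{pro:4c} contributes the stage-$m_1$ perturbation $\delta\bd_{i,1}=\bd_{i,1}-\bd_{i,1}'$, while the modified terminal cost contributes a stage-$m_2$ perturbation built from $\delta\bd_{i,2:4}=\bd_{i,2:4}-\bd_{i,2:4}'$ (through $\hS_{m_2}^T\delta\bbq_{m_2}$, $-A_{m_2}^T\delta\bbzeta_{m_2+1}$, and $-\mu\,\delta\bbp_{m_2}$). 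By linearity I would then decompose $(\delta\bw, \delta\bzeta)$ by superposition into the part driven by $\delta\bd_{i,1}$ alone and the part driven by $\delta\bd_{i,2:4}$ alone, each being the solution perturbation of a linear-quadratic DP subject to a perturbation localized at a single endpoint. Here Corollary \ref{cor:1} is indispensable: it certifies that $\LP_{\mu}^i(\bd_i)$ obeys the lower-bounded reduced Hessian, controllability, and boundedness conditions with constants independent of $i$ and $\tau$, which are exactly the hypotheses of the cited sensitivity results. Applying those results to each part yields a uniform rate $\rho\in(0,1)$ and constant $C'>0$ for which the endpoint perturbation decays geometrically into the interior: $\rho^{k-m_1}\|\delta\bd_{i,1}\|$ from the $m_1$-perturbation and $\rho^{m_2-k}\|\delta\bd_{i,2:4}\|$ from the $m_2$-perturbation. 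Adding the two contributions and maximizing over the primal and dual blocks gives the stated bound.

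The main obstacle I anticipate is the second step: casting the subproblem's boundary-value system exactly into the template required by the sensitivity theorems, in particular handling the nonstandard terminal block $\hQ_{m_2}+\mu I$ and the terminal terms $\nabla_{\bx_{m_2}}\mL - A_{m_2}^T\bbzeta_{m_2+1}$ so that $\delta\bd_{i,2:4}$ registers cleanly as a single stage-$m_2$ perturbation, and then verifying that the uniform lower bound $\gamma_{RH}$ together with the uniform controllability and boundedness constants from Lemma \ref{lem:1} and Corollary \ref{cor:1} propagate to a decay rate $\rho$ and prefactor $C'$ that are genuinely independent of $i$ and $\tau$. Once the two endpoint perturbations are correctly identified and the uniform constants are in place, the geometric decay is a direct consequence of the locality (sensitivity) estimates.
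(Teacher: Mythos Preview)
Your proposal is correct and follows essentially the same approach as the paper: both split the boundary perturbation into the $m_1$-part and the $m_2$-part, invoke Corollary~\ref{cor:1} to obtain uniform constants, and then apply the primal-dual sensitivity results of \cite{Na2020Exponential, Na2022Convergence} to each endpoint perturbation to get the $\rho^{k-m_1}$ and $\rho^{m_2-k}$ decay. The only stylistic difference is that you exploit the affine dependence of the solution on $\bd_i$ directly (so the difference solves the homogeneous system and superposition is immediate), whereas the paper carries out the same decomposition via a parameterized path $\bd^{(1)}(\omega),\bd^{(2)}(\omega)$ from $\bd_i$ to $\bd_i'$ and integrates the bounded directional derivatives along it; your shortcut is a legitimate simplification in the linear-quadratic setting.
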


\begin{proof}
See Appendix \ref{pf:thm:2}.
\end{proof}

The next theorem characterizes the approximation error of the Newton direction.

\begin{theorem}[error of approximate direction]\label{thm:3}
Let Assumptions \ref{ass:1}-\ref{ass:3} hold for Problem \eqref{pro:3} and $\mu\geq \bar{\mu}$ with $\bar{\mu}$ given by \eqref{equ:mu:cond}. There exist constants $C>0$ and $\rho\in(0, 1)$ independent~of $\tau$, such that
\begin{equation}\label{equ:error}
\|(\tDelta\bz^\tau - \Delta\bz^\tau, \tDelta\blambda^\tau - \Delta\blambda)\| \leq C\rho^b\|(\Delta\bz^\tau, \Delta\blambda^\tau)\|.
\end{equation}	
	
\end{theorem}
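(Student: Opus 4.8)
The plan is to decompose the global error $(\tDelta\bz - \Delta\bz, \tDelta\blambda - \Delta\blambda)$ stage-by-stage and relate it, on each exclusive interval $[n_i, n_{i+1})$, to the sensitivity bound from Theorem~\ref{thm:2}. The central observation, following Remark~\ref{rem:2} and Theorem~\ref{thm:1}(i), is that the exact Newton direction, when restricted to the extended interval $[m_1, m_2]$, equals the subproblem solution evaluated at the \emph{correct} boundary variables $\bd_i^\star = (\Delta\bx_{m_1}; \Delta\bz_{m_2}; \Delta\blambda_{m_2+1})$; that is, $(\tbwi(\bd_i^\star), \tbzetai(\bd_i^\star))$ agrees with the exact direction on $[m_1, m_2]$. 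On the other hand, FOTD computes $(\tbwi(\0), \tbzetai(\0))$ using the zero boundary guess $\bd_i = \0$. Since the composition operator $\mC$ only reads off the exclusive part $[n_i, n_{i+1})$, the error at any stage $k \in [n_i, n_{i+1})$ is exactly $\|\tbw_{i,k}^\star(\0) - \tbw_{i,k}^\star(\bd_i^\star)\|$ (and likewise for the dual), which Theorem~\ref{thm:2} controls.

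First I would fix an interval $i$ and a stage $k \in [n_i, n_{i+1})$, and apply Theorem~\ref{thm:2} with $\bd_i = \0$ and $\bd_i' = \bd_i^\star$ to obtain
\begin{equation*}
\max\{\|\tbw_{i,k}^\star(\0) - \tbw_{i,k}^\star(\bd_i^\star)\|, \; \|\tbzeta_{i,k}^\star(\0) - \tbzeta_{i,k}^\star(\bd_i^\star)\|\}
\leq C'\rbr{\rho^{k-m_1}\|\bd_{i,1}^\star\| + \rho^{m_2-k}\|\bd_{i,2:4}^\star\|}.
\end{equation*}
The key geometric point is that $k$ lies in the exclusive middle region, so $k - m_1 \geq n_i - m_1 = b$ and $m_2 - k \geq m_2 - n_{i+1} = b$ (using the overlap construction in \eqref{equ:m}); hence both exponential factors are bounded by $\rho^b$. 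This yields a per-stage bound of the form $C'\rho^b(\|\bd_{i,1}^\star\| + \|\bd_{i,2:4}^\star\|)$. The boundary data $\bd_i^\star$ consists of components of the exact Newton direction $(\Delta\bz, \Delta\blambda)$ at the stages $m_1, m_2, m_2+1$, so each term is bounded by $\|(\Delta\bz, \Delta\blambda)\|$.

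Next I would aggregate over stages. Because the composed error vector collects these per-stage contributions across all $M$ intervals, I would pass from the $\max_k$ bound to the full $\ell_2$ norm, using that the number of stages in each interval is controlled by $N/M$ and that the boundary terms $\|\bd_i^\star\|$ are each dominated by the global norm $\|(\Delta\bz, \Delta\blambda)\|$. Summing the squared per-stage bounds gives $\|(\tDelta\bz - \Delta\bz, \tDelta\blambda - \Delta\blambda)\| \leq C\rho^b \|(\Delta\bz, \Delta\blambda)\|$ after absorbing the stage-count factor into the constant $C$ (and possibly adjusting $\rho$ slightly upward in $(0,1)$ to absorb polynomial-in-horizon prefactors against the exponential decay). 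The main obstacle I anticipate is not the sensitivity estimate itself—Theorem~\ref{thm:2} does the heavy lifting—but rather the bookkeeping at the interval boundaries: verifying cleanly that the exact Newton solution restricted to $[m_1, m_2]$ genuinely coincides with $(\tbwi(\bd_i^\star), \tbzetai(\bd_i^\star))$ (this relies on the terminal-cost construction \eqref{pro:4a} being precisely the truncation of the full KKT system, via Theorem~\ref{thm:1}(i) applied to the linear-quadratic problem), and ensuring the aggregation from a max-norm stagewise bound to a global $\ell_2$ norm does not introduce an $N$-dependence that would spoil uniformity in the horizon length. Handling the two boundary subproblems $i=0$ and $i=M-1$ separately—where the extension or terminal adjustment is truncated—would also require a short, careful check.
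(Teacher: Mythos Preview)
Your identification of the key mechanism is correct and matches the paper: the exact Newton direction restricted to $[m_1,m_2]$ is $(\tbwi(\bd_i^\star),\tbzetai(\bd_i^\star))$ with $\bd_i^\star = (\Delta\bx_{m_1};\Delta\bz_{m_2};\Delta\blambda_{m_2+1})$, the FOTD direction is $(\tbwi(\0),\tbzetai(\0))$, and Theorem~\ref{thm:2} controls the difference stagewise with exponents at least $b$ on the exclusive part.

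The gap is in your aggregation. Bounding each stage by $C'\rho^b\|(\Delta\bz,\Delta\blambda)\|$ and then summing squares over all stages produces a $\sqrt{N}$ prefactor that you cannot remove: absorbing it into $C$ breaks horizon-uniformity, and ``adjusting $\rho$'' does not help because $b$ is fixed independently of $N$, so $N\rho^{2b}$ is not uniformly bounded. The paper avoids this by \emph{not} passing to the coarse bound first. It keeps the sharper stagewise estimate $\rho^{k-m_1}\|\bd_{i,1}^\star\| + \rho^{m_2-k}\|\bd_{i,2:4}^\star\|$, squares, and sums over $k\in[n_i,n_{i+1})$; the resulting geometric series $\sum_{j\geq 0}\rho^{2(b+j)}\leq \rho^{2b}/(1-\rho^2)$ contributes no interval-length factor and leaves a term proportional to $\|\bd_i^\star\|^2$. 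Summing over $i$ then uses that the $\bd_i^\star$ are (essentially disjoint) subvectors of $(\Delta\bz,\Delta\blambda)$, so $\sum_i\|\bd_i^\star\|^2\leq \|(\Delta\bz,\Delta\blambda)\|^2$ with no $M$ factor. This two-step geometric aggregation is the missing idea.

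One further detail the paper handles explicitly and you only allude to: for the first subproblem one needs $\bd_{0,1}^\star=\Delta\bx_0=\0$, which the paper proves by induction on $\tau$ using the algorithm's initialization $\bx_0^0=\bbx_0$ and the fact that $\tDelta\bx_0^\tau=\0$ is enforced by \eqref{pro:4c} with $\bd_{0,1}^\tau=\0$.
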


\begin{proof}
See Appendix \ref{pf:thm:3}.
\end{proof}

Theorem \ref{thm:3} suggests that the error of the approximate direction decays exponentially fast in terms of the overlap size $b$. We can naturally expect that $(\tDelta\blambda^\tau, \tDelta\blambda^\tau)$ contains enough~information to decrease the merit function in each iteration, provided $b$ is large. We study how the~inexactness of the direction affects SQP in the next section.

\section{Global convergence of FOTD}\label{sec:5}

To enable general distributed methods for solving \eqref{equ:Newton}, we suppose that a decomposition method outputs a direction $(\tDelta\bz^\tau, \tDelta\blambda^\tau)$ satisfying
\begin{equation}\label{equ:error:cond}
\|(\tDelta\bz^\tau - \Delta\bz^\tau, \tDelta\blambda^\tau - \Delta\blambda^\tau)\| \leq \delta\cdot \|(\Delta\bz^\tau, \Delta\blambda^\tau)\|, \quad \text{ for } \delta \in (0, 1).
\end{equation}
Theorem \ref{thm:3} shows that FOTD specializes \eqref{equ:error:cond} with $\delta = C\rho^b$. In this section, we study the convergence of SQP with the merit function \eqref{equ:aug:L} and the direction $(\tDelta\bz^\tau, \tDelta\blambda^\tau)$. We require a compactness assumption to strengthen Assumption \ref{ass:3} to hold in a compact set. We recall that the SQP iterates are generated by \eqref{equ:update} and \eqref{equ:Armijo}.

\begin{assumption}[compactness]\label{ass:4}
There exists a compact set $\mZ\times \Lambda$, where $\mZ = \mZ_0\times\cdots\times\mZ_N$ and $\Lambda = \Lambda_0\times\cdots\times\Lambda_N$, such that $(\bz^\tau + \alpha\tDelta\bz^\tau, \blambda^\tau+ \alpha\tDelta\blambda^\tau) \in \mZ\times \Lambda$, $\forall \tau\geq 0, \alpha\in[0, 1]$; and we assume that $\{g_k, f_k\}$ are thrice continuously differentiable and
\begin{equation}\label{equ:upper:third:new}
\max\{\sup_{\mZ_k\times\Lambda_{k+1}} \|H_k(\bz_k, \blambda_{k+1})\|,\; \sup_{\mZ_k}\|A_k(\bz_k)\|,\;   \sup_{\mZ_k}\|B_k(\bz_k) \|\}  \leq \Upsilon_{upper},
\end{equation}
for a constant $\Upsilon_{upper}>0$ independent of $k$. 
\end{assumption}

We use the same constant $\Upsilon_{upper}$ as in Assumption \ref{ass:3} for notational simplicity. The compactness assumption is standard in the SQP literature \cite[Proposition 4.15]{Bertsekas1996Constrained}. One equivalent assumption is~to assume that the sublevel set $\{(\bz, \blambda): \mL_{\eta}(\bz, \blambda) \leq \mL_{\eta}^0\}$ is contained in a compact set. Furthermore, assuming the existence of the third derivatives on $\{g_k,f_k\}$ is common for the augmented Lagrangian merit function \eqref{equ:aug:L}, since $\nabla^2\mL_{\eta}$ requires $\nabla^3g_k$ and $\nabla^3f_k$ \cite{Bertsekas1996Constrained, Zavala2014Scalable, Na2022adaptive, Na2023Inequality}. Note that~the~third~derivatives are only required in the analysis, and not computed or used in the algorithm.

The following lemma is an immediate consequence of Assumption \ref{ass:4}.

\begin{lemma}\label{lem:2}
	
Under Assumption \ref{ass:4}, there exists a constant $\Upsilon_{HG}>0$ such that
\begin{equation}\label{equ:upper}
\max\{\sup_{\mZ\times \Lambda}\|H(\bz, \blambda)\|, \sup_{\mZ}\|G(\bz)\|\}  \leq \Upsilon_{HG}.
\end{equation}
Further, if Assumptions \ref{ass:1}-\ref{ass:3} hold as well, there exists a constant $\Upsilon_{KKT}>0$ that is independent of $\tau$, such that
\begin{equation*}
\nbr{\begin{pmatrix}
\hH^\tau & (G^\tau)^T\\
G^\tau & \0
\end{pmatrix}^{-1} } \leq \Upsilon_{KKT}, \quad \forall\tau\geq 0.
\end{equation*}
\end{lemma}

\begin{proof}
See Appendix \ref{pf:lem:2}
\end{proof}

The bound \eqref{equ:upper} suggests that $\|G^\tau\|\leq\Upsilon_{HG}$, $\forall\tau\geq 0$ (since $\bz^\tau\in \mZ$). Since \eqref{equ:upper:third:new} implies $\max\{\|A_k^\tau\|,\;\\ \|B_k^\tau\|\}\leq \Upsilon_{upper}$ for any $k\in[N-1]$, under Assumption \ref{ass:4}, we only need $\|\hH^\tau\|\leq~\Upsilon_{upper}$~from~Assumption \ref{ass:3}. To simplify the presentation, we define $\Upsilon = \max\{\Upsilon_{upper}, \Upsilon_{KKT}, \Upsilon_{HG}\}$.

We now show that $(\tDelta\bz^\tau, \tDelta\blambda^\tau)$ is a descent direction of $\mL_{\eta}^\tau$ provided $\eta_1$ is sufficiently large~and $\eta_2, \delta$ are sufficiently small.

\begin{theorem}\label{thm:4}
Suppose Assumptions \ref{ass:1}, \ref{ass:2}, \ref{ass:3}, \ref{ass:4} hold for the SQP iterates with a search direction $(\tDelta\bz^\tau, \tDelta\blambda^\tau)$ satisfying \eqref{equ:error:cond}. If
\begin{equation}\label{cond:1}
\eta_1\geq \frac{17}{\eta_2\gamma_G}, \quad \eta_2\leq \frac{\gamma_{RH}}{12\Upsilon^2}, \quad \delta \leq \frac{\eta_2\gamma_{G}}{9\eta_1\Upsilon^2},
\end{equation}
where $\gamma_{G}$ is defined in \eqref{equ:gamma:G}, then
\begin{equation}\label{equ:while:loop}
\begin{pmatrix}
\nabla_{\bz}\mL_{\eta}^\tau\\
\nabla_{\blambda}\mL_{\eta}^\tau
\end{pmatrix}^T\begin{pmatrix}
\tDelta\bz^\tau\\
\tDelta\blambda^\tau
\end{pmatrix} \leq -\frac{\eta_2}{2}\nbr{\begin{pmatrix}
\nabla_{\bz}\mL^\tau\\
\nabla_{\blambda}\mL^\tau
\end{pmatrix}}^2.
\end{equation}	
\end{theorem}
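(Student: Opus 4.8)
The plan is to decompose the directional derivative into the exact-Newton part plus an error part,
\[
\begin{pmatrix}\nabla_{\bz}\mL_{\eta}\\ \nabla_{\blambda}\mL_{\eta}\end{pmatrix}^{T}\!\begin{pmatrix}\tDelta\bz\\ \tDelta\blambda\end{pmatrix}
=\begin{pmatrix}\nabla_{\bz}\mL_{\eta}\\ \nabla_{\blambda}\mL_{\eta}\end{pmatrix}^{T}\!\begin{pmatrix}\Delta\bz\\ \Delta\blambda\end{pmatrix}
+\begin{pmatrix}\nabla_{\bz}\mL_{\eta}\\ \nabla_{\blambda}\mL_{\eta}\end{pmatrix}^{T}\!\begin{pmatrix}\tDelta\bz-\Delta\bz\\ \tDelta\blambda-\Delta\blambda\end{pmatrix},
\]
(suppressing $\tau$). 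I would first show that the exact Newton direction is a descent direction with a margin strictly stronger than $\eta_2/2$, and then argue that \eqref{equ:error:cond} makes the error part too small to spoil it.

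For the exact part I would insert the gradient formula \eqref{equ:der:aug:L} and the two block rows of the Newton system \eqref{equ:Newton}, namely $\hH\Delta\bz+G^{T}\Delta\blambda=-\nabla_{\bz}\mL$ and $G\Delta\bz=-\nabla_{\blambda}\mL$. After the cancellations this collapses to
\[
-\Delta\bz^{T}\hH\Delta\bz+2\nabla_{\blambda}\mL^{T}\Delta\blambda+\eta_2\nabla_{\bz}\mL^{T}(H-\hH)\Delta\bz-\eta_1\|\nabla_{\blambda}\mL\|^{2}-\eta_2\|\nabla_{\bz}\mL\|^{2},
\]
so the two clean negative terms $-\eta_2\|\nabla_{\bz}\mL\|^{2}$ and $-\eta_1\|\nabla_{\blambda}\mL\|^{2}$ are already present, and only the first three terms need to be controlled. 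I would treat the indefinite quadratic by splitting $\Delta\bz=\Delta\bz_{N}+\Delta\bz_{R}$ into its $\ker G$ and $\operatorname{range}(G^{T})$ components: on the null part Assumption \ref{ass:1} gives $\Delta\bz_{N}^{T}\hH\Delta\bz_{N}\ge\gamma_{RH}\|\Delta\bz_{N}\|^{2}$, while Lemma \ref{lem:0} yields $\|\Delta\bz_{R}\|^{2}\le\|\nabla_{\blambda}\mL\|^{2}/\gamma_{G}$, so that the range part only feeds into the $\|\nabla_{\blambda}\mL\|^{2}$ budget absorbed by $-\eta_1\|\nabla_{\blambda}\mL\|^{2}$.

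The cross terms $\eta_2\nabla_{\bz}\mL^{T}(H-\hH)\Delta\bz$ and $2\nabla_{\blambda}\mL^{T}\Delta\blambda$ are then split by Young's inequality, using the boundedness constants of Lemma \ref{lem:2} ($\|H-\hH\|\le2\Upsilon$, $\|\Delta\bz\|,\|\Delta\blambda\|\le\Upsilon\|(\nabla_{\bz}\mL;\nabla_{\blambda}\mL)\|$, $\|G\|\le\Upsilon$), with weights chosen so that each contributes only a small multiple of $\|\nabla_{\bz}\mL\|^{2}$, absorbed by the clean $-\eta_2\|\nabla_{\bz}\mL\|^{2}$ together with the spare $-\gamma_{RH}\|\Delta\bz_{N}\|^{2}$ (this is what forces $\eta_2\le\gamma_{RH}/(12\Upsilon^{2})$), plus a large multiple of $\|\nabla_{\blambda}\mL\|^{2}$ whose reciprocal weight produces the $1/(\eta_2\gamma_{G})$ scaling, absorbed once $\eta_1\ge17/(\eta_2\gamma_{G})$. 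The outcome should be an exact-direction bound of the form $\le-\tfrac34\eta_2\|(\nabla_{\bz}\mL;\nabla_{\blambda}\mL)\|^{2}$.

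For the error part I would bound $\|(\nabla_{\bz}\mL_{\eta};\nabla_{\blambda}\mL_{\eta})\|\le\|M\|\,\|(\nabla_{\bz}\mL;\nabla_{\blambda}\mL)\|$, with $M$ the coefficient matrix of \eqref{equ:der:aug:L} so that $\|M\|\lesssim\eta_1\Upsilon$, and combine \eqref{equ:error:cond} with $\|(\Delta\bz;\Delta\blambda)\|\le\Upsilon\|(\nabla_{\bz}\mL;\nabla_{\blambda}\mL)\|$ to get a bound $\delta\eta_1\Upsilon^{2}\|(\nabla_{\bz}\mL;\nabla_{\blambda}\mL)\|^{2}$; the choice $\delta\le\eta_2\gamma_{G}/(9\eta_1\Upsilon^{2})$ together with $\gamma_{G}\le1$ then caps it at $\tfrac19\eta_2\|(\nabla_{\bz}\mL;\nabla_{\blambda}\mL)\|^{2}$, and adding the two parts gives the claimed $-\tfrac{\eta_2}{2}$ bound. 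I expect the main obstacle to be the bookkeeping in the exact-direction step: because the merit function carries the true Hessian $H$ while the step uses the modified $\hH$, the discrepancy term $\eta_2\nabla_{\bz}\mL^{T}(H-\hH)\Delta\bz$ does not vanish, and one must simultaneously keep the indefinite quadratic under control through the null/range split and tune every Young weight so that all three bad terms fit inside the $\eta_2$-budget for $\|\nabla_{\bz}\mL\|^{2}$ and the $\eta_1$-budget for $\|\nabla_{\blambda}\mL\|^{2}$ at once; this coupling through $\gamma_{RH}$, $\gamma_{G}$, and $\Upsilon$ is precisely what pins down the thresholds in \eqref{cond:1}.
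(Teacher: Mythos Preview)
Your overall decomposition and your collapsed expression for the exact part,
\[
-\Delta\bz^{T}\hH\Delta\bz+2\nabla_{\blambda}\mL^{T}\Delta\blambda+\eta_2\nabla_{\bz}\mL^{T}(H-\hH)\Delta\bz-\eta_1\|\nabla_{\blambda}\mL\|^{2}-\eta_2\|\nabla_{\bz}\mL\|^{2},
\]
are correct, and the null/range split together with Young's inequalities can indeed be tuned to close the argument under \eqref{cond:1}. The paper, however, organizes the absorption differently. Instead of converting $\|(\Delta\bz,\Delta\blambda)\|$ into $\|\nabla\mL\|$ via the KKT inverse bound of Lemma~\ref{lem:2} and aiming for an exact-direction bound purely in $\|\nabla\mL\|^{2}$, the paper retains an \emph{extra} negative margin in the step itself,
\[
\I_1\;\le\;-\tfrac{\eta_2}{2}\|\nabla\mL\|^{2}\;-\;\tfrac{\eta_2\gamma_G}{8}\,\|(\Delta\bz,\Delta\blambda)\|^{2},
\]
and then bounds the error part directly as $\I_2\le 1.1\,\eta_1\delta\Upsilon^{2}\|(\Delta\bz,\Delta\blambda)\|^{2}$, so that the condition $\delta\le \eta_2\gamma_G/(9\eta_1\Upsilon^2)$ lets the two $\|(\Delta\bz,\Delta\blambda)\|^{2}$ terms cancel. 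This is exactly the two-part margin highlighted in \eqref{equ:margin}. Your route is arguably cleaner for Theorem~\ref{thm:4} in isolation, since it avoids carrying the $\|(\Delta\bz,\Delta\blambda)\|^{2}$ bookkeeping and leans once on $\Upsilon_{KKT}$. The paper's route buys something downstream: the residual inequality $-(\eta_2\gamma_G/8-1.1\eta_1\delta\Upsilon^{2})\|(\Delta\bz,\Delta\blambda)\|^{2}$ is reused verbatim in the proof of Theorem~\ref{thm:6} to absorb the second-order Taylor term, which is naturally of size $O(\|(\Delta\bz,\Delta\blambda)\|^{2})$ rather than $O(\|\nabla\mL\|^{2})$. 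With your approach that later step would require an additional conversion back through the KKT matrix norm.
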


\begin{proof}
See Appendix \ref{pf:thm:4}.
\end{proof}

By the results of Theorem \ref{thm:4} and the mean value theorem, we immediately know that, under~the presented conditions, the stepsize $\alpha_\tau$ to satisfy the Armijo~condition \eqref{equ:Armijo} can be found by the~backtracking line search, and the SQP iterates with the direction $(\tDelta\bz^\tau, \tDelta\blambda^\tau)$ make successful progress towards a stationary point.

Theorem \ref{thm:4} also shows the importance of using the exact augmented Lagrangian merit function \eqref{equ:aug:L}. In particular, as proved in \eqref{pequ:15}, we rely on a critical property: for suitably chosen $\eta = (\eta_1,\eta_2)$, there exists a small constant $\kappa(\eta_2)>0$ (depending on $\eta_2$) such that
\begin{align}\label{equ:margin}
\begin{pmatrix}
\nabla_{\bz}\mL_{\eta}^\tau\\
\nabla_{\blambda}\mL_{\eta}^\tau
\end{pmatrix}^T\begin{pmatrix}
\Delta\bz^\tau\\
\Delta\blambda^\tau
\end{pmatrix}\leq - \kappa(\eta_2)\bigg(\underbrace{\nbr{\begin{pmatrix}
\nabla_{\bz}\mL^\tau\\
\nabla_{\blambda}\mL^\tau
\end{pmatrix}}^2}_{\text{ensure convergence}} +  \underbrace{\nbr{\begin{pmatrix}
\Delta\bz^\tau\\
\Delta\blambda^\tau
\end{pmatrix}}^2}_{\text{allow approximation}} \bigg).
\end{align}
The first term is needed for global convergence when we accumulate the descent over iterations.~The second term allows us to replace $(\Delta\bz^\tau, \Delta\blambda^\tau)$ by $(\tDelta\bz^\tau, \tDelta\blambda^\tau)$ because, as proved in \eqref{pequ:28},
\begin{equation*}
\nbr{\begin{pmatrix}
\nabla_{\bz}\mL_{\eta}^\tau\\
\nabla_{\blambda}\mL_{\eta}^\tau
\end{pmatrix}}\nbr{\begin{pmatrix}
\tDelta\bz^\tau - \Delta\bz^\tau\\
\tDelta\blambda^\tau - \Delta\blambda^\tau
\end{pmatrix}} \lesssim \delta \nbr{\begin{pmatrix}
\Delta\bz^\tau\\
\Delta\blambda^\tau
\end{pmatrix}}^2,
\end{equation*}
where $\lesssim$ means that the inequality holds up to a constant multiplier. Therefore, for small enough~$\delta$, the margin $\|(\Delta\bz^\tau, \Delta\blambda^\tau)\|$ in \eqref{equ:margin} allows for an approximation error. Our analysis rules out the~exact penalty merit functions that depend only on the primal variables $\bz$. Such a function $\mM(\bz)$ satisfies $\nabla_{\bz}^T\mM^\tau \Delta\bz^\tau \leq -\kappa \|\Delta\bz^\tau\|^2$ and $\|\nabla_{\bz}\mM^\tau\| \lesssim\|\Delta\bz^\tau\|$ (e.g., \cite{Fletcher1973exact, Glad1979multiplier, Schittkowski1982nonlinear, Pillo1994Exact, Bertsekas1996Constrained}). If we approximate $\Delta\bz^\tau$~by $\tDelta\bz^\tau$, Theorems \ref{thm:2}, \ref{thm:3} suggest that the approximation error $\|\tDelta\bz^\tau - \Delta\bz^\tau\|$ also depends on $\|\Delta\blambda^\tau\|$. Thus, the margin $\|\Delta\bz^\tau\|^2$ may not be enough to endure the approximation error. This reveals the benefits of using the primal-dual exact merit functions based on \eqref{equ:aug:L}.

We summarize the global convergence results in the next theorem.

\begin{theorem}[global convergence]\label{thm:5}
\hskip-0.35cm Suppose Assumptions \ref{ass:1}, \ref{ass:2}, \ref{ass:3}, \ref{ass:4} hold for the SQP iterates with $\{(\tDelta\bz^\tau, \tDelta\blambda^\tau)\}_\tau$ satisfying \eqref{equ:error:cond} and parameters $(\eta, \delta)$ satisfying \eqref{cond:1}, then $\|\nabla\mL^\tau\|\rightarrow 0$ as $\tau\rightarrow \infty$. Furthermore, for the FOTD iterates, if $\mu\geq \bar{\mu}$ with $\bar{\mu}$ given by \eqref{equ:mu:cond} as well as $b$ satisfies
\begin{equation}\label{equ:cond:b}
b\geq \frac{\log\rbr{9C\eta_1\Upsilon^2/(\eta_2\gamma_{G})}}{\log(1/\rho)}
\end{equation}
with $C>0$ and $\rho\in(0, 1)$ from Theorem \ref{thm:3}, then the FOTD iterates $\{(\bz^\tau, \blambda^\tau)\}_\tau$ generated by~Algorithm \ref{alg:FOTD} satisfy $\|\nabla\mL^\tau\|\rightarrow 0$ as $\tau\rightarrow \infty$.
\end{theorem}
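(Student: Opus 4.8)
The plan is to prove the general statement first by a standard line-search (Zoutendijk-type) argument, and then to obtain the FOTD claim by checking that its hypotheses reduce to those of the general statement. For the general part, Theorem \ref{thm:4} already delivers the essential per-iteration descent inequality \eqref{equ:while:loop}, so the only remaining task is to convert descent into $\|\nabla\mL^\tau\|\to0$. First I would control the length of the search direction: the exact Newton direction satisfies $\|(\Delta\bz^\tau,\Delta\blambda^\tau)\|\leq\Upsilon_{KKT}\|\nabla\mL^\tau\|$ by the bounded-inverse estimate of Lemma \ref{lem:2}, and then the error bound \eqref{equ:error:cond} gives $\|(\tDelta\bz^\tau,\tDelta\blambda^\tau)\|\leq(1+\delta)\Upsilon_{KKT}\|\nabla\mL^\tau\|\leq 2\Upsilon_{KKT}\|\nabla\mL^\tau\|$. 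Thus the direction length is uniformly controlled by $\|\nabla\mL^\tau\|$.

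Next I would establish that the backtracking line search on \eqref{equ:Armijo} accepts a stepsize bounded below by a uniform constant $\alpha_{\min}>0$. Assumption \ref{ass:4} (thrice continuous differentiability of $\{g_k,f_k\}$ on the compact set $\mZ\times\Lambda$) provides a uniform Lipschitz constant $L$ for $\nabla\mL_{\eta}$ on $\mZ\times\Lambda$; this is precisely where the third-derivative hypothesis is needed, since $\nabla^2\mL_{\eta}$ involves $\nabla^3 g_k$ and $\nabla^3 f_k$. A second-order Taylor expansion of $\mL_{\eta}$ along $(\tDelta\bz^\tau,\tDelta\blambda^\tau)$, bounded using $\|\nabla^2\mL_{\eta}\|\leq L$, shows that \eqref{equ:Armijo} holds for every $\alpha$ up to a threshold of order $(1-\beta)\eta_2/(L\Upsilon_{KKT}^2)$, where I have invoked \eqref{equ:while:loop} to lower-bound $-\nabla\mL_{\eta}^{\tau T}(\tDelta\bz^\tau;\tDelta\blambda^\tau)$ by $\tfrac{\eta_2}{2}\|\nabla\mL^\tau\|^2$ and the direction bound to upper-bound $\|(\tDelta\bz^\tau,\tDelta\blambda^\tau)\|^2$. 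Since backtracking only shrinks $\alpha$ by a fixed factor, the accepted stepsize obeys $\alpha_\tau\geq\alpha_{\min}$ with $\alpha_{\min}$ a uniform positive constant. Substituting $\alpha_\tau\geq\alpha_{\min}$ and \eqref{equ:while:loop} into \eqref{equ:Armijo} yields $\mL_{\eta}^\tau-\mL_{\eta}^{\tau+1}\geq \beta\alpha_{\min}\tfrac{\eta_2}{2}\|\nabla\mL^\tau\|^2$. Because $\mL_{\eta}$ is continuous on the compact set $\mZ\times\Lambda$ and hence bounded below, the telescoping sum $\sum_\tau(\mL_{\eta}^\tau-\mL_{\eta}^{\tau+1})$ is finite, so $\sum_\tau\|\nabla\mL^\tau\|^2<\infty$ and therefore $\|\nabla\mL^\tau\|\to0$.

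For the FOTD-specific claim I would simply verify the hypotheses of the general part. Theorem \ref{thm:3} shows that FOTD realizes \eqref{equ:error:cond} with $\delta=C\rho^b$. It then remains to check that $(\eta,\delta)$ satisfies \eqref{cond:1}: the bounds on $\eta_1$ and $\eta_2$ are imposed directly, while the third requirement $\delta\leq \eta_2\gamma_G/(9\eta_1\Upsilon^2)$ becomes $C\rho^b\leq \eta_2\gamma_G/(9\eta_1\Upsilon^2)$, which rearranges—taking logarithms and using $\rho\in(0,1)$—into exactly the lower bound \eqref{equ:cond:b} on $b$. Hence the general result applies to the FOTD iterates and $\|\nabla\mL^\tau\|\to0$.

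The main obstacle is the uniform lower bound on the stepsize $\alpha_\tau$: it is the single point where the smoothness of the augmented Lagrangian (requiring the compactness and third-derivative hypotheses of Assumption \ref{ass:4}) must be combined with the control of the inexact direction length (via the bounded inverse KKT matrix from Lemma \ref{lem:2} together with the inexactness bound \eqref{equ:error:cond}). Once the descent inequality of Theorem \ref{thm:4} is in hand, everything else is routine bookkeeping, and the translation of the $b$-condition is a direct algebraic rearrangement.
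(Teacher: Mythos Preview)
Your proposal is correct and follows essentially the same route as the paper: bound the inexact direction via Lemma \ref{lem:2} and \eqref{equ:error:cond}, use the compactness/third-derivative hypothesis of Assumption \ref{ass:4} to get a uniform bound on $\nabla^2\mL_\eta$, combine this with Theorem \ref{thm:4} in a Taylor expansion to obtain a uniform stepsize lower bound, telescope, and then specialize to FOTD by rewriting $C\rho^b\leq \eta_2\gamma_G/(9\eta_1\Upsilon^2)$ as \eqref{equ:cond:b}. The paper's proof is exactly this argument with the explicit threshold $\alpha_\tau\leq (1-\beta)\eta_2/(4\Upsilon_\eta\Upsilon^2)$.
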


\begin{proof}
See Appendix \ref{pf:thm:5}.
\end{proof}

We have shown the global convergence of FOTD. Theorem \ref{thm:5} complements the local convergence result of the Schwarz scheme \cite{Na2022Convergence} and answers Q1 raised in \cref{sec:1}. The next section studies the local convergence of FOTD. We show that FOTD and the Schwarz have the same local behavior.

To end this section, we discuss an adaptivity extension of our scheme.

\begin{remark}[adaptivity on penalty parameters]
Given the conditions \eqref{cond:1} on $(\eta, \delta = C\rho^b)$, it is possible to design a scheme that adaptively selects the suitable penalty parameters $\eta$ and overlap size $b$. Since the upper/lower bound constants in \eqref{cond:1} do not depend on $\tau$, we can~design a while loop to achieve this goal. In particular, we let $\nu>1$ be fixed. While \eqref{equ:while:loop} does not~hold,~we~let
\begin{equation*}
\eta_2 \leftarrow \eta_2/\nu, \quad \eta_1\leftarrow \eta_1\nu^2, \quad \delta\leftarrow \delta/\nu^4.
\end{equation*}
Then, we know $\eta_1\eta_2$ increases by a factor of $\nu$, and $\eta_2$ and $\delta\eta_1/\eta_2$ decrease by a factor of $1/\nu$. Thus, for sufficiently large $\tau$, all parameters are stabilized. We also note that $\delta\leftarrow \delta/\nu^4$ is equivalent to letting $b\leftarrow b + 4\log\nu/\log(1/\rho)$, where only $\rho$ has to be tuned manually. 
\end{remark}

\section{Local convergence of FOTD}\label{sec:6}

\hskip-0.1cm From now on, we suppose the FOTD iterates satisfy~$(\bz^\tau, \blambda^\tau)$ $\rightarrow (\tz, \tlambda)$ as $\tau\rightarrow \infty$. For two positive sequences $\{a_\tau\}$ and $\{b_\tau\}$, $a_\tau = O(b_\tau)$ if $a_\tau/b_\tau$ is uniformly bounded over $\tau$; $a_\tau = o(b_\tau)$ if $a_\tau/b_\tau\rightarrow 0$ as $\tau\rightarrow \infty$. Our local analysis is divided into three steps:
\begin{enumerate}[label=(\alph*),topsep=3pt]
\setlength\itemsep{0.0em}
\item we show that $\alpha_\tau = 1$ is selected for the Armijo condition \eqref{equ:Armijo} when $\tau$ is large.

\item we show a relationship between FOTD and the Schwarz scheme: Line 6 in Algorithm \ref{alg:FOTD} is~equivalent to performing a single Newton step for subproblems in Line 5 in Algorithm \ref{alg:OTD}.

\item we prove that FOTD converges linearly, with a linear rate that decays exponentially in $b$.
\end{enumerate}
We present additional assumptions for local analysis.

\begin{assumption}[Hessian approximation vanishes]\label{ass:5}
We assume that $\|H^\tau - \hH^\tau\| = o(1)$, where $H^\tau = \nabla_{\bz}^2\mL^\tau$ is the Lagrangian Hessian~and $\hH^\tau$ is its approximation.
\end{assumption}

A vanishing Hessian modification is typical for the local analysis of SQP to have a superlinear (or quadratic) convergence \cite{Boggs1995Sequential}. As discussed in Remark \ref{rem:4}, we can check \eqref{equ:reduced} for Hessian $H^\tau$ to decide if a modification is needed, since \eqref{equ:reduced} holds locally if SOSC is satisfied at $(\bz^\star, \blambda^\star)$. Equivalently, we check the positive definiteness of $H^\tau + c (G^\tau)^TG^\tau$ for a constant $c$, which is a block-tridiagonal matrix. A parallel Cholesky decomposition is applicable in this regard.

\begin{assumption}[local Lipschitz continuity]\label{ass:6}

We assume there exists a constant $\Upsilon_L$ independent of $k$ such that, for any two points $(\bz, \blambda)$ and $(\bz', \blambda')$ sufficiently close to $(\tz, \tlambda)$,
\begin{align*}
\max\{\|A_k(\bz_k) - A_k(\bz_k')\|,\; \|B_k(\bz_k) - B_k(\bz_k')\|\} \leq &\Upsilon_L\|\bz_k - \bz_k'\|, \\
\|H_k(\bz_k, \blambda_{k+1}) - H_k(\bz_k', \blambda_{k+1}')\|  \leq & \Upsilon_L\nbr{(
\bz_k - \bz_k';\blambda_{k+1} - \blambda_{k+1}')}.
\end{align*}
\end{assumption}

Assumption \ref{ass:6} strengthens the boundedness condition in Assumption \ref{ass:4} to the (local) Lipschitz continuity. We start the local analysis with Step (a).

\vskip4pt
\noindent{\bf Step (a): A unit stepsize is accepted}. We have the following theorem.

\begin{theorem}\label{thm:6}
Suppose Assumptions \ref{ass:1}, \ref{ass:2}, \ref{ass:3}, \ref{ass:4}, \ref{ass:5}, \ref{ass:6} hold for the SQP iterates with search directions $\{(\tDelta\bz^\tau, \tDelta\blambda^\tau)\}_\tau$ satisfying \eqref{equ:error:cond} and parameters $(\eta, \delta)$ satisfying
\begin{equation}\label{cond:2}
\eta_1\geq \frac{17}{\eta_2\gamma_G}, \quad \eta_2\leq \frac{\gamma_{RH}}{12\Upsilon^2}, \quad  \delta \leq \frac{1/2-\beta}{3/2-\beta}\cdot\frac{\eta_2\gamma_{G}}{9\eta_1\Upsilon^2},
\end{equation}
then $\alpha_\tau = 1$ for all sufficiently large $\tau$.
\end{theorem}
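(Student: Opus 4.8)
**

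The goal is to show that for the approximate search direction $(\tDelta\bz^\tau, \tDelta\blambda^\tau)$, the unit stepsize $\alpha_\tau = 1$ eventually satisfies the Armijo condition \eqref{equ:Armijo}. This is the standard mechanism by which a differentiable exact augmented Lagrangian merit function overcomes the Maratos effect. The plan is to perform a Taylor expansion of $\mL_\eta$ at $(\bz^\tau, \blambda^\tau)$ along the direction $(\tDelta\bz^\tau, \tDelta\blambda^\tau)$ and to show that the actual decrease $\mL_\eta^{\tau+1} - \mL_\eta^\tau$ (with $\alpha_\tau=1$) is at least $\beta$ times the predicted first-order decrease, namely $\beta (\nabla\mL_\eta^\tau)^T(\tDelta\bz^\tau;\tDelta\blambda^\tau)$.

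\textbf{Key steps.} First, I would write the second-order Taylor expansion
$$
\mL_\eta^{\tau+1} = \mL_\eta^\tau + \begin{pmatrix}\nabla_{\bz}\mL_\eta^\tau\\ \nabla_{\blambda}\mL_\eta^\tau\end{pmatrix}^T\begin{pmatrix}\tDelta\bz^\tau\\ \tDelta\blambda^\tau\end{pmatrix} + \frac{1}{2}\begin{pmatrix}\tDelta\bz^\tau\\ \tDelta\blambda^\tau\end{pmatrix}^T\nabla^2\mL_\eta(\xi^\tau)\begin{pmatrix}\tDelta\bz^\tau\\ \tDelta\blambda^\tau\end{pmatrix},
$$
where $\xi^\tau$ lies on the segment between $(\bz^\tau,\blambda^\tau)$ and $(\bz^{\tau+1},\blambda^{\tau+1})$; this uses the thrice-differentiability and compactness from Assumption \ref{ass:4}. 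To verify \eqref{equ:Armijo} at $\alpha_\tau=1$ it then suffices to show
$$
\tfrac{1}{2}(\tDelta\bz^\tau;\tDelta\blambda^\tau)^T\nabla^2\mL_\eta(\xi^\tau)(\tDelta\bz^\tau;\tDelta\blambda^\tau) \leq -(1-\beta)(\nabla\mL_\eta^\tau)^T(\tDelta\bz^\tau;\tDelta\blambda^\tau).
$$
Second, I would analyze the Hessian quadratic term at the limit. As $\tau\rightarrow\infty$ we have $(\bz^\tau,\blambda^\tau)\rightarrow(\tz,\tlambda)$, so $\nabla^2\mL_\eta(\xi^\tau)\rightarrow \nabla^2\mL_\eta(\tz,\tlambda)$. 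At a KKT point the structure of \eqref{equ:der:aug:L}, together with $\nabla\mL^\star=\0$, lets me evaluate $\nabla^2\mL_\eta(\tz,\tlambda)$ explicitly; the crucial property of the exact augmented Lagrangian is that its Hessian at the solution, contracted with the (exact) Newton direction, produces a quantity comparable to $\frac{1}{2}(\nabla\mL_\eta^\tau)^T(\Delta\bz^\tau;\Delta\blambda^\tau)$ up to $o(1)$ terms, precisely because the Newton direction satisfies the KKT linear system \eqref{equ:Newton}. Here Assumption \ref{ass:5} ($\|H^\tau-\hH^\tau\|=o(1)$) is essential so that the direction computed with $\hH^\tau$ asymptotically agrees with the exact-Hessian Newton step.

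Third, I would pass from the exact direction to the approximate one. Using Theorem \ref{thm:4}'s margin inequality \eqref{equ:margin}, the bound $(\nabla\mL_\eta^\tau)^T(\Delta\bz^\tau;\Delta\blambda^\tau)\leq -\kappa(\eta_2)(\|\nabla\mL^\tau\|^2+\|(\Delta\bz^\tau;\Delta\blambda^\tau)\|^2)$ gives a negative first-order term with a quadratic margin in $\|(\Delta\bz^\tau;\Delta\blambda^\tau)\|$; the error estimate \eqref{equ:error:cond} together with Lemma \ref{lem:2} then lets me absorb the difference $(\tDelta\bz^\tau-\Delta\bz^\tau;\tDelta\blambda^\tau-\Delta\blambda^\tau)$ at the cost of a term of order $\delta\|(\Delta\bz^\tau;\Delta\blambda^\tau)\|^2$. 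The refined constant $\frac{1/2-\beta}{3/2-\beta}$ in \eqref{cond:2}, sharper than in \eqref{cond:1}, is exactly what guarantees the $\beta$-fraction Armijo margin survives after subtracting both the approximation error and the $o(1)$ Hessian-mismatch terms, since $\beta<1/2$.

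\textbf{Main obstacle.} The delicate part is controlling the second-order term $\frac{1}{2}(\tDelta\bz^\tau;\tDelta\blambda^\tau)^T\nabla^2\mL_\eta(\xi^\tau)(\tDelta\bz^\tau;\tDelta\blambda^\tau)$ and showing it is dominated by the first-order term with the correct $(1-\beta)$ factor; this requires a careful accounting of how the curvature of $\mL_\eta$ at the limit interacts with the Newton direction, and how the $o(1)$ Hessian mismatch and the $O(\delta)$ approximation error each consume part of the available margin. Keeping all these error terms uniformly small while preserving a strict $\beta$-fraction decrease is where the tightened bound on $\delta$ in \eqref{cond:2} must be used precisely.
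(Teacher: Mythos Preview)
Your proposal is correct and follows essentially the same approach as the paper: a second-order Taylor expansion of $\mL_\eta$, replacement of $\nabla^2\mL_\eta^\tau$ by its ``clean'' part (using $\|\nabla\mL^\tau\|\to 0$), and then exploiting the Newton system \eqref{equ:Newton} together with \eqref{equ:der:aug:L} to show that the linear plus quadratic terms combine to $o(\|(\Delta\bz^\tau,\Delta\blambda^\tau)\|^2)$ modulo the Hessian-modification error (handled by Assumption~\ref{ass:5}) and the direction-approximation error (handled by the tightened $\delta$ bound in \eqref{cond:2}). The only cosmetic difference is that the paper works directly with an explicit matrix $\H^\tau$ at the current iterate and shows $\|\nabla^2\mL_\eta^\tau-\H^\tau\|=o(1)$, rather than passing to the limit point $(\tz,\tlambda)$ as you describe; the two viewpoints are equivalent here.
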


\begin{proof}
See Appendix \ref{pf:thm:6}.
\end{proof}

The condition \eqref{cond:2} on $\delta$ is stronger than \eqref{cond:1} up to a multiplier depending on $\beta$. For~FOTD,~we suppose $\mu\geq \bar{\mu}$ with $\bar{\mu}$ given by \eqref{equ:mu:cond}, and let $\delta = C\rho^b$ in \eqref{cond:2} to get a condition on $b$ as in \eqref{equ:b:cond}.

\vskip4pt
\noindent\textbf{Step (b): A relation between FOTD and the Schwarz scheme}. We establish a relationship between FOTD and the Schwarz scheme. We will show that, by specifying $\bd^\tau = (\0; \0; \0; \0)$~in~Problem \eqref{pro:4}, FOTD is equivalent to performing \textit{one Newton step} for subproblems \eqref{pro:2} with a \textit{warm-start initialization} in the Schwarz scheme. The result is summarized in the next theorem.

\begin{theorem}\label{thm:7}
	
Given the current iterate $(\bz^\tau, \blambda^\tau)$, we consider two procedures:

\noindent(a) Schwarz with a warm-start initialization: for $i\in[M-1]$, we specify boundary variables $\bd_i^\tau = (\bx_{m_1}^\tau; \bx_{m_2}^\tau; \bu_{m_2}^\tau; \blambda_{m_2+1}^\tau)$; perform \textbf{one full Newton step} for $\P_{\mu}^i(\bd_i^\tau)$ \eqref{pro:2} at $\mD_i(\bz^\tau, \blambda^\tau)$ and get $(\tbz_i^{\tau+1}, \tblambda_i^{\tau+1})$; then let $(\bz^{\tau+1}, \blambda^{\tau+1}) = \mC(\{(\tbz_i^{\tau+1}, \tblambda_i^{\tau+1})\}_i)$.

\noindent(b) FOTD scheme without Hessian approximation: for $i \in[M-1]$, we specify~boundary variables $\bd_i^\tau = (\0; \0; \0; \0)$; solve $\LP_{\mu}^i(\bd_i^\tau)$ in \eqref{pro:4} with $\hH_k^\tau = H_k^\tau$, $\forall k\in[m_1, m_2]$; obtain $(\tbwi(\bd_i^\tau), \tbzetai(\bd_i^\tau))$; then let $(\tDelta\bz^\tau, \tDelta\blambda^\tau) = \mC(\{(\tbwi(\bd_i^\tau), \tbzetai(\bd_i^\tau))\}_i)$ and update as $(\bz^{\tau+1}, \blambda^{\tau+1}) = (\bz^{\tau}, \blambda^{\tau}) + (\tDelta\bz^\tau, \tDelta\blambda^\tau)$.

Then, starting from $(\bz^\tau, \blambda^\tau)$, both procedures generate the same next iterate $(\bz^{\tau+1}, \blambda^{\tau+1})$.
\end{theorem}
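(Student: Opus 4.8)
The plan is to show that, for each extended interval $[m_1,m_2]$, the two procedures solve literally the \emph{same} linear (Newton/KKT) system, so their per-subproblem outputs coincide, and then to exploit the linearity of $\mC$ to conclude that the assembled full iterates agree. I treat a generic interior interval and set aside the corner cases $i=0$ (with $m_1=0$) and $i=M-1$ (with $m_2=N$), which are only simpler since the initial constraint, resp. the terminal adjustment, is trivial. Throughout, let $L^i$ denote the Lagrangian of the nonlinear subproblem $\P_\mu^i(\bd_i^\tau)$ in \eqref{pro:2}, with primal variables $\tbz_i$ and dual variables $\tblambda_i$, and recall that the warm start is $\mD_i(\bz^\tau,\blambda^\tau)$.

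First I would express the Newton step of procedure (a) as a quadratic program. A single SQP/Newton step on an equality-constrained problem, taken at a point, is equivalent to solving the QP whose objective is the quadratic model of the Lagrangian and whose constraints are the linearized constraints; when the linear term of that QP is written as the gradient of the subproblem \emph{Lagrangian} rather than of its objective, the optimal multiplier of the QP is exactly the dual \emph{direction} $\Delta\tblambda_i$. This is the same objective-gradient/Lagrangian-gradient convention discussed after Problem \eqref{pro:3}, and it is the content of \cite[Theorem 16.2]{Nocedal2006Numerical}. Thus procedure (a) produces $(\Delta\tbz_i,\Delta\tblambda_i)$ solving the KKT system of this linearized QP at $\mD_i(\bz^\tau,\blambda^\tau)$, and returns $(\tbz_i^{\tau+1},\tblambda_i^{\tau+1})=\mD_i(\bz^\tau,\blambda^\tau)+(\Delta\tbz_i,\Delta\tblambda_i)$.

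The crux is then to verify, term by term, that this linearized QP is exactly $\LP_\mu^i(\0)$ from \eqref{pro:4} with $\hH=H$ (the linearization already flagged in Remark \ref{rem:3}). The Hessian blocks match because at the warm start the interior blocks $\nabla^2_{\bz_k}L^i$ equal the full blocks $H_k^\tau$ (same cost, same dynamics, same dual $\blambda_{k+1}^\tau$), while the terminal Hessian of $\tg_{m_2}$ equals $Q_{m_2}^\tau+\mu I$, which matches the terminal quadratic of \eqref{pro:4a} once $\bbq_{m_2}=\bbp_{m_2}=\0$. The constraint data match because the initial residual $\bx_{m_1}^\tau-\bd_{i,1}=\0$ forces $\Delta\bx_{m_1}=\0=\bbp_{m_1}$ in \eqref{pro:4c}, and the dynamics residual $\bx_{k+1}^\tau-f_k(\bz_k^\tau)=\nabla_{\blambda_{k+1}}\mL^\tau$ reproduces the inhomogeneous term in \eqref{pro:4b}. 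The linear terms match because $\nabla_{\bz_k}L^i=\nabla_{\bz_k}\mL^\tau$ at the warm start for every interior $k$; and — this is the delicate computation — at $k=m_2$ the gradient of $\tg_{m_2}$ (with the $\mu$-penalty killed by $\bx_{m_2}^\tau=\bbx_{m_2}$ and with $\bblambda_{m_2+1}=\blambda_{m_2+1}^\tau$) satisfies $\nabla_{\bx_{m_2}}L^i=\nabla_{\bx_{m_2}}\mL^\tau$, which is exactly the terminal linear term $\nabla_{\bx_{m_2}}\mL-A_{m_2}^T\bbzeta_{m_2+1}$ of \eqref{pro:4a} precisely because $\bbzeta_{m_2+1}=\0$. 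Since both systems are nonsingular (the reduced-Hessian bound of Lemma \ref{lem:1}, here with $\hH=H$ justified locally by Assumption \ref{ass:5}), their unique solutions coincide: $\Delta\tbz_i=\tbwi$ and $\Delta\tblambda_i=\tbzetai$ for every $i$. Finally, applying $\mC$ and using that $\mC$ is linear with $\mC(\{\mD_i(\bz^\tau,\blambda^\tau)\}_i)=(\bz^\tau,\blambda^\tau)$ (composition inverts decomposition on the exclusive tiles, by Definition \ref{def:1}), procedure (a) outputs $(\bz^\tau,\blambda^\tau)+\mC(\{(\tbwi,\tbzetai)\}_i)=(\bz^\tau,\blambda^\tau)+(\tDelta\bz^\tau,\tDelta\blambda^\tau)$, which is exactly the update of procedure (b).

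The main obstacle is the boundary bookkeeping at stage $m_2$. The nontrivial content is that the terminal cost \eqref{equ:terminal:cost} was engineered so that its linearization at the warm start reproduces the \emph{exact} full-problem gradient and Hessian at stage $m_2$: the $-\bblambda_{m_2+1}^T f_{m_2}$ term must supply the missing $-A_{m_2}^T\blambda_{m_2+1}^\tau$ inside $\nabla_{\bx_{m_2}}\mL$, and the quadratic penalty must contribute only to the Hessian (its gradient vanishing at the warm start). Getting these boundary identities exactly right, together with checking the two corner cases and the precise cancellation of $\bbzeta_{m_2+1}$, is where the care is needed; everything else reduces to the routine ``Newton step $=$ linearized QP'' equivalence and the linearity of $\mC$.
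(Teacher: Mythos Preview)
Your proposal is correct and follows essentially the same approach as the paper: both show that the Newton system of $\P_\mu^i(\bd_i^\tau)$ at the warm start $\mD_i(\bz^\tau,\blambda^\tau)$ coincides term-by-term with the KKT system of $\LP_\mu^i(\0)$ when $\hH_k=H_k$, and then use the linearity of $\mC$ together with $\mC(\{\mD_i(\bz^\tau,\blambda^\tau)\}_i)=(\bz^\tau,\blambda^\tau)$ to conclude. One small remark: you need not invoke Assumption~\ref{ass:5} to justify $\hH=H$, since this is simply hypothesized in part~(b) of the theorem statement; the paper's proof treats it as a purely algebraic identity of linear systems without appealing to local conditions.
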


\begin{proof}
See Appendix \ref{pf:thm:7}.
\end{proof}

Theorem \ref{thm:7} reveals a strong relation between the Schwarz scheme and FOTD. Locally, FOTD can be seen as an improvement of the warm-start Schwarz scheme, where a single Newton step is performed for the subproblems, instead of solving the subproblems to optimality as the original Schwarz did. The warm initialization is recommended by \cite{Na2022Convergence} for practical purpose, which avoids~the case where the solutions of the same subproblem in different iterations are very distinct.

\vskip4pt

\noindent\textbf{Step (c): local linear convergence of FOTD}. We now establish the local convergence rate for FOTD. For $\tau\geq 0$ we let
\vskip-10pt\begin{equation}\label{equ:def:psi}
\Psi^\tau \coloneqq \max_{k\in[N]}\Psi_k^\tau \coloneqq \max_{k\in[N]}\|(\bz_k^\tau, \blambda_k^\tau) - (\tz_k, \tlambda_k)\|.
\end{equation}
We require the following lemma that shows the one-step error recursion. We use $C_1, C_2$ to~denote generic constants that are ensured to exist, but may differ from the constant $C$ in Theorem \ref{thm:3}.~The constant $\rho$ is from Theorem \ref{thm:3}.

\begin{lemma}\label{lem:3}
Consider the FOTD iterates $\{(\bz^\tau, \blambda^\tau)\}_\tau$ under Assumptions \ref{ass:1}, \ref{ass:2}, \ref{ass:3}, \ref{ass:4},~\ref{ass:5}, \ref{ass:6} and suppose $\mu\geq \bar{\mu}$ with $\bar{\mu}$ given by $\eqref{equ:mu:cond}$ and $\eta$ satisfies \eqref{cond:2}. There exist constants $C_1>0$ and $\rho\in(0, 1)$ independent of $\tau, \eta_1, \eta_2, \beta$, such that, if $b$ satisfies
\vskip-3pt\begin{equation}\label{equ:b:cond}
b\geq \frac{\log\cbr{(3/2-\beta)C_1\eta_1/((1/2-\beta)\eta_2)}}{\log(1/\rho)},
\end{equation}
then, for all sufficiently large $\tau$ (in each case below, $m_1, m_2$ depend on $i$ correspondingly, see \eqref{equ:m}),
\vskip-10pt\begin{subequations}\label{equ:result}
\begin{align}
\Psi_k^{\tau+1} \leq & o(\Psi^\tau) + C_1\cbr{\rho^{k-m_1}\|\bx_{m_1}^\tau - \tx_{m_1}\| + \rho^{m_2-k}\nbr{\begin{pmatrix}
\bz_{m_2}^\tau - \tz_{m_2}\\
\blambda_{m_2+1}^\tau - \tlambda_{m_2+1}
\end{pmatrix}} },\;\;  \substack{\forall k\in[n_i, n_{i+1}) \\ i\in[M-2]}\;\;,  \label{equ:result:b}\\[3pt]
\Psi_k^{\tau+1} \leq & o(\Psi^\tau) + C_1\rho^{k-m_1}\|\bx_{m_1}^\tau - \tx_{m_1}\|, \quad \forall k\in[n_{M-1}, n_M]. \label{equ:result:c}
\end{align}	
\end{subequations}	
\end{lemma}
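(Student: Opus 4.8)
The plan is to combine the unit-stepsize result of \cref{thm:6} with the Schwarz reinterpretation of \cref{thm:7}, and then to split the one-step error stage-by-stage into a quadratic Newton contribution and a subproblem-sensitivity contribution. First I would fix $b$ large enough that $\delta = C\rho^b$ obeys \eqref{cond:2}; this is exactly the content of the threshold \eqref{equ:b:cond}, and by \cref{thm:6} it guarantees $\alpha_\tau = 1$ for all sufficiently large $\tau$, so that the FOTD update reduces to the full step $(\bz^{\tau+1}, \blambda^{\tau+1}) = (\bz^\tau, \blambda^\tau) + (\tDelta\bz^\tau, \tDelta\blambda^\tau)$. By \cref{thm:7}, the exact-Hessian version of this step coincides with performing one Newton step on each nonlinear subproblem $\P_\mu^i(\bd_i^\tau)$ at the point $\mD_i(\bz^\tau, \blambda^\tau)$, with boundary data $\bd_i^\tau = (\bx_{m_1}^\tau; \bx_{m_2}^\tau; \bu_{m_2}^\tau; \blambda_{m_2+1}^\tau)$; the mismatch caused by using $\hH^\tau$ rather than $H^\tau$ will be handled separately via \cref{ass:5}. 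Denote the resulting subproblem iterate by $(\tbz_{i,k}^{\tau+1}, \tblambda_{i,k}^{\tau+1})$.

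Second, for $k \in [n_i, n_{i+1})$ the composed iterate equals the subproblem-$i$ variable, so I would insert the exact KKT point $(\tbz_i^\star(\bd_i^\tau), \tblambda_i^\star(\bd_i^\tau))$ of $\P_\mu^i(\bd_i^\tau)$ and use \cref{thm:1}(i) to identify $(\tz_k, \tlambda_k)$ as the KKT point of $\P_\mu^i(\bd_i^\star)$ with $\bd_i^\star = (\tx_{m_1}; \tx_{m_2}; \tu_{m_2}; \tlambda_{m_2+1})$, yielding
\begin{equation*}
\Psi_k^{\tau+1} \leq \nbr{\begin{pmatrix}\tbz_{i,k}^{\tau+1} - \tbz_{i,k}^\star(\bd_i^\tau)\\ \tblambda_{i,k}^{\tau+1} - \tblambda_{i,k}^\star(\bd_i^\tau)\end{pmatrix}} + \nbr{\begin{pmatrix}\tbz_{i,k}^\star(\bd_i^\tau) - \tz_k\\ \tblambda_{i,k}^\star(\bd_i^\tau) - \tlambda_k\end{pmatrix}}.
\end{equation*}
For the first (Newton-error) term, I would invoke the standard quadratic contraction of a single Newton step: by the Lipschitz continuity of the subproblem KKT residual (\cref{ass:6}) and the uniform invertibility of the subproblem KKT matrix (guaranteed by \cref{cor:1} in the manner of \cref{lem:2}), the error is $O(\|\mD_i(\bz^\tau, \blambda^\tau) - (\tbz_i^\star(\bd_i^\tau), \tblambda_i^\star(\bd_i^\tau))\|^2)$. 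Since $\mD_i(\bz^\tau, \blambda^\tau)$ lies within $O(\Psi^\tau)$ of $\mD_i(\tz, \tlambda)$, and the solution of $\P_\mu^i(\bd_i^\tau)$ lies within $O(\|\bd_i^\tau - \bd_i^\star\|) = O(\Psi^\tau)$ of it as well, this term is $O((\Psi^\tau)^2) = o(\Psi^\tau)$. The Hessian replacement perturbs the Newton step by at most $O(\|H^\tau - \hH^\tau\|)\cdot O(\Psi^\tau) = o(\Psi^\tau)$ by \cref{ass:5}, so it folds into the same $o(\Psi^\tau)$ term.

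For the second (sensitivity) term, I would apply the nonlinear primal-dual sensitivity bound for the subproblems from \cite{Na2022Convergence} (the nonlinear analogue of \cref{thm:2}, whose uniform hypotheses are supplied by \cref{cor:1}) to the two boundary data $\bd_i^\tau$ and $\bd_i^\star$. Because $\bd_{i,1}^\tau - \bd_{i,1}^\star = \bx_{m_1}^\tau - \tx_{m_1}$ and $\bd_{i,2:4}^\tau - \bd_{i,2:4}^\star = (\bz_{m_2}^\tau - \tz_{m_2}; \blambda_{m_2+1}^\tau - \tlambda_{m_2+1})$, this produces exactly the exponentially weighted term $C_1\{\rho^{k-m_1}\|\bx_{m_1}^\tau - \tx_{m_1}\| + \rho^{m_2-k}\|(\bz_{m_2}^\tau - \tz_{m_2}; \blambda_{m_2+1}^\tau - \tlambda_{m_2+1})\|\}$, giving \eqref{equ:result:b}. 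For $i = M-1$ the terminal adjustment is removed, so $\bd_{M-1}$ reduces to the initial state $\bd_{M-1,1}$ alone, the sensitivity bound carries no $\rho^{m_2-k}$ term, and \eqref{equ:result:c} follows. The main obstacle I anticipate is the first term: establishing the quadratic Newton contraction with a contraction constant that is uniform in both $i$ and $\tau$, which hinges on using \cref{cor:1} to bound the subproblem KKT-matrix inverse uniformly (exactly as in \cref{lem:2}) together with the Lipschitz estimates of \cref{ass:6}, while simultaneously propagating the $o(1)$ Hessian mismatch without contaminating the exponential decay supplied by the sensitivity analysis.
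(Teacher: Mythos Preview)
Your approach is correct but takes a different route from the paper. You insert the exact nonlinear subproblem solution $(\tbz_i^\star(\bd_i^\tau), \tblambda_i^\star(\bd_i^\tau))$ as an intermediate point and split the error into a Newton-contraction term plus a nonlinear sensitivity term (invoking the results of \cite{Na2022Convergence}). The paper instead compares directly to the truncated full-horizon solution $\mD_i(\tz,\tlambda)$: it writes the update as $\K_i^{-1}\J_1 + \K_i^{-1}\J_2$, where $\J_1$ is the subproblem KKT residual evaluated at $\mD_i(\tz,\tlambda)$ (nonzero only at the two boundary blocks) and $\J_2$ collects the linearization and Hessian-mismatch remainders; the stagewise bound then follows from the exponential decay of the blocks of $\K_i^{-1}$ established in \cite[Lemma~2]{Na2023Superconvergence}. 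The paper's route never needs the existence or local uniqueness of the nonlinear subproblem solution $(\tbz_i^\star(\bd_i^\tau),\tblambda_i^\star(\bd_i^\tau))$---it is a purely linear-algebraic argument on the current KKT matrix---whereas your route requires that object to be well defined near $\bd_i^\star$ (an implicit-function-theorem step that you should make explicit). On the other hand, your decomposition mirrors exactly the ``algorithmic rate $+$ perturbation rate'' interpretation the paper highlights after the lemma, and it reuses the Schwarz analysis wholesale, which is conceptually clean. Both approaches produce the same $o(\Psi^\tau)$ first term and the same exponentially weighted boundary term; the paper's version additionally yields a stagewise decay structure inside the $o(\Psi^\tau)$ term, but that refinement is not needed for the lemma as stated.
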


\begin{proof}
See Appendix \ref{pf:lem:3}.
\end{proof}

Lemma \ref{lem:3} relies on the decay structure of the KKT matrix inverse, established in \cite[Lemma 2]{Na2023Superconvergence}. In particular, the authors showed that, if Assumptions \ref{ass:1}-\ref{ass:3} hold for subproblems \eqref{pro:4} (verified in Corollary \ref{cor:1}), the block matrices of KKT inverse corresponding to each stage have an~exponentially decay structure (see \cite[Figure 2]{Na2023Superconvergence}). The core of such a result is the primal-dual sensitivity analysis of NLDPs \cite{Na2020Exponential, Na2022Convergence}, which we also made use of in the proof of Theorem \ref{thm:2}.

Seeing from \eqref{equ:result}, the first $o(\Psi^\tau)$ term is the algorithmic convergence rate, which is superlinear and achieved by the SQP framework; the second term $C_1\{\rho^{k-m_1}\|\bx_{m_1}^\tau - \tx_{m_1}\| + \rho^{m_2-k}\|(\bz_{m_2}^\tau, \blambda_{m_2+1}^\tau) \\- (\tz_{m_2}, \tlambda_{m_2+1})\|\}$ has a linear convergence rate, which is brought by the horizon truncation in~OTD. Specifically, the perturbations come from the misspecification of boundary variables: we use $\bd_i^\tau = (\0;\0;\0;\0)$ while $\bd_i^\star = (\Delta\bx_{m_1}; \Delta\bx_{m_2}; \Delta\bu_{m_2};\Delta\blambda_{m_2+1})$.

Our result in Lemma \ref{lem:3} is different from \cite[Theorem 2]{Na2023Superconvergence}, where the algorithmic convergence~rate is quadratic as they used the unperturbed Hessian $H^\tau$ in each iteration. The Hessian modification is necessary in our case as it ensures the global convergence. Our result is also different from \cite[Theorem 7]{Na2022Convergence}, which has no algorithmic rate as they solved the subproblems to optimality. Our result reveals that, even if we do not solve the subproblems to optimality, as long as the algorithmic rate is faster than linear (superlinear in our case), the perturbation rate will always dominate for large $\tau$, which is linear. Therefore, a local linear convergence is still achieved.

We summarize the local convergence guarantee in the next theorem.

\begin{theorem}[uniform linear convergence]\label{thm:8}
Under the setup of Lemma \ref{lem:3}, we have for all sufficiently large $\tau$ that $\Psi^{\tau+1} \leq 3C_1\rho^b\cdot \Psi^\tau$ for constants $C_1>0$ and $\rho\in(0, 1)$ from Lemma \ref{lem:3}.
\end{theorem}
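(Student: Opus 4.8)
The plan is to turn the per-stage, one-step recursion of Lemma~\ref{lem:3} into a single uniform contraction, by bounding every decay exponent appearing in \eqref{equ:result} from below by $b$ and every perturbation factor from above by $\Psi^\tau$. Throughout I take $\tau$ large enough that the iterate lies in the local regime where Lemma~\ref{lem:3} applies, so that the $o(\Psi^\tau)$ term is meaningful and the unit stepsize of Theorem~\ref{thm:6} is in force. First I would control the exponents using the definition \eqref{equ:m} of the extended boundaries: for an exclusive stage $k\in[n_i,n_{i+1})$ with extended interval $[m_1,m_2]$, one has $k-m_1\geq n_i-m_1=b$ whenever $m_1=n_i-b$, and $m_2-k\geq(n_{i+1}+b)-(n_{i+1}-1)=b+1$ whenever $m_2=n_{i+1}+b$; since $\rho\in(0,1)$ this yields $\rho^{k-m_1}\leq\rho^b$ and $\rho^{m_2-k}\leq\rho^b$. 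In the \emph{generic} regime $b\leq N/M$ this covers all interior subproblems, and the only remaining exceptions are boundaries that coincide with the horizon endpoints, where the associated perturbation factor vanishes by construction: if $m_1=0$ then $\bx_{m_1}^\tau=\bx_0^\tau=\bbx_0=\tx_0$, so $\|\bx_{m_1}^\tau-\tx_{m_1}\|=0$; and for the last subproblem the terminal cost is exact, which is exactly why \eqref{equ:result:c} carries no terminal term. Hence every surviving term in \eqref{equ:result} already carries a factor $\rho^b$.

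Next I would bound the perturbation factors by $\Psi^\tau$. Since $\bz_{m_1}=(\bx_{m_1};\bu_{m_1})$ we have $\|\bx_{m_1}^\tau-\tx_{m_1}\|\leq\|\bz_{m_1}^\tau-\tz_{m_1}\|\leq\Psi_{m_1}^\tau\leq\Psi^\tau$, while splitting the stacked terminal vector over stages $m_2$ and $m_2+1$ gives
\begin{equation*}
\nbr{\begin{pmatrix}\bz_{m_2}^\tau-\tz_{m_2}\\ \blambda_{m_2+1}^\tau-\tlambda_{m_2+1}\end{pmatrix}}\leq\sqrt{(\Psi_{m_2}^\tau)^2+(\Psi_{m_2+1}^\tau)^2}\leq\sqrt{2}\,\Psi^\tau.
\end{equation*}
Substituting these two bounds into \eqref{equ:result:b} and \eqref{equ:result:c}, together with $\rho^{k-m_1},\rho^{m_2-k}\leq\rho^b$, gives for every $k\in[N]$ the estimate $\Psi_k^{\tau+1}\leq o(\Psi^\tau)+(1+\sqrt{2})\,C_1\rho^b\,\Psi^\tau$; taking the maximum over $k$ then produces $\Psi^{\tau+1}\leq o(\Psi^\tau)+(1+\sqrt{2})C_1\rho^b\,\Psi^\tau$.

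Finally I would absorb the superlinear remainder into the clean constant, exploiting the slack between $1+\sqrt{2}\approx 2.41$ and $3$. Because $o(\Psi^\tau)/\Psi^\tau\to 0$ as $\tau\to\infty$ while $(2-\sqrt{2})C_1\rho^b$ is a fixed positive number, for all sufficiently large $\tau$ we have $o(\Psi^\tau)\leq(2-\sqrt{2})C_1\rho^b\,\Psi^\tau$, and therefore $\Psi^{\tau+1}\leq 3C_1\rho^b\,\Psi^\tau$, which is the claim. The difficulty here is bookkeeping rather than depth: one must verify that the lower bound $b$ on each decay exponent is genuinely uniform over all subproblems and stages, so that the horizon-endpoint corner cases annihilate precisely the terms whose exponents might otherwise fall below $b$, and one must track the constant tightly---using $\sqrt{2}$ instead of the cruder $2$ for the stacked terminal factor---so that the leftover superlinear term still fits beneath $3C_1\rho^b$. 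All the genuinely analytic work, namely the primal--dual sensitivity and the KKT-inverse decay, has already been packaged into Lemma~\ref{lem:3}.
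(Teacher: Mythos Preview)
Your proposal is correct and follows essentially the same approach as the paper's proof: both use $\min\{k-m_1,m_2-k\}\geq b$ together with $\bx_0^\tau=\tx_0=\bbx_0$ to reduce \eqref{equ:result} to a uniform $C_1\rho^b\Psi^\tau$-type bound and then take the maximum over $k$. Your treatment is in fact more careful than the paper's one-line argument, since you explicitly use the $\sqrt{2}$ bound on the stacked terminal vector to create slack $(3-(1+\sqrt{2}))C_1\rho^b$ for absorbing the $o(\Psi^\tau)$ remainder, whereas the paper simply asserts $\Psi_k^{\tau+1}\leq 3C_1\rho^b\Psi^\tau$ without detailing this step.
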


\begin{proof}
We use the facts that $\bx_0^\tau = \tx_0 = \bbx_0$ (see the proof of Theorem \ref{thm:3}) and $\min\{k-m_1,m_2-k\} \geq b$ for all $k\in[n_i, n_{i+1}]$, and apply \eqref{equ:result}. Then, we obtain $\Psi_k^{\tau+1}\leq 3C_1\rho^b\Psi^\tau$ for all $k$. Thus, by \eqref{equ:def:psi},~we complete the proof.
\end{proof}

We finally summarize our convergence analysis of FOTD in the next theorem.

\begin{theorem}[convergence of FOTD in Algorithm \ref{alg:FOTD}]\label{thm:9}
\hskip-10pt Consider Algorithm \ref{alg:FOTD} under~Assumptions \ref{ass:1}-\ref{ass:3}, \ref{ass:4}, and suppose $\mu$ satisfies \eqref{equ:mu:cond}. There exist constants $C_2>0, \rho\in(0, 1)$ independent of $\tau$ and algorithmic parameters $\eta_1, \eta_2, \beta$, such that if
\begin{equation*}
\eta_1\geq \frac{17}{\eta_2\gamma_G}, \quad \eta_2\leq \frac{\gamma_{RH}}{12\Upsilon^2}, \quad b \geq \frac{\log\cbr{(3/2-\beta)C_2\eta_1/((1/2-\beta)\eta_2)} }{\log(1/\rho)},
\end{equation*}
then $\|\nabla\mL^\tau\|\rightarrow 0$ as $\tau\rightarrow \infty$. Moreover, if Assumptions \ref{ass:5}, \ref{ass:6} hold locally as well, then for all sufficiently large $\tau$, $\alpha_\tau = 1$ and
\begin{equation}\label{equ:linear}
\Psi^{\tau+1} \leq (C_2\rho^b)\Psi^\tau.
\end{equation}
\end{theorem}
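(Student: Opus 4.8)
The plan is to obtain the theorem by stitching together the three convergence results already proved for FOTD---global convergence (Theorem~\ref{thm:5}), acceptance of the unit stepsize (Theorem~\ref{thm:6}), and the local linear rate (Theorem~\ref{thm:8})---and then fixing a single constant $C_2$, independent of $\eta_1,\eta_2,\beta$, whose associated threshold on $b$ dominates every threshold appearing in those results. First I would invoke Theorem~\ref{thm:3}: because $\mu\geq\bar\mu$, the FOTD direction satisfies the inexactness bound \eqref{equ:error:cond} with $\delta=C\rho^b$, where $C>0$ and $\rho\in(0,1)$ are from Theorem~\ref{thm:3} (this is the same $\rho$ used in Lemma~\ref{lem:3} and Theorem~\ref{thm:8}). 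I would then set
\[
C_2:=\max\cbr{\frac{9C\Upsilon^2}{\gamma_G},\; 3C_1},
\]
with $C_1$ the constant of Lemma~\ref{lem:3}. Since $C, C_1, \Upsilon, \gamma_G$ are all independent of $\tau,\eta_1,\eta_2,\beta$, so is $C_2$, and the same $\rho$ is reused throughout.

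For the global claim (Assumptions~\ref{ass:1}--\ref{ass:4}) I would verify that the stated hypotheses place $(\eta,\delta)$ inside the feasible set of Theorem~\ref{thm:5}. The two inequalities on $\eta_1,\eta_2$ coincide verbatim with those of \eqref{cond:1}. Rearranging the $b$-bound of the theorem gives $\rho^b\leq (1/2-\beta)\eta_2/\{(3/2-\beta)C_2\eta_1\}$, and using $C_2\geq 9C\Upsilon^2/\gamma_G$ this yields
\[
\delta=C\rho^b\leq \frac{C(1/2-\beta)\eta_2}{(3/2-\beta)C_2\eta_1}\leq \frac{1/2-\beta}{3/2-\beta}\cdot\frac{\eta_2\gamma_G}{9\eta_1\Upsilon^2},
\]
which is exactly the third inequality of \eqref{cond:2}, and a fortiori that of \eqref{cond:1} since $(1/2-\beta)/(3/2-\beta)<1$. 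Thus $(\eta,\delta)$ satisfies \eqref{cond:1} while FOTD satisfies \eqref{equ:error:cond}, so Theorem~\ref{thm:5} applies and gives $\|\nabla\mL^\tau\|\to 0$.

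For the local claims I would add Assumptions~\ref{ass:5} and~\ref{ass:6}. Since \eqref{cond:2} holds (shown in the previous display), Theorem~\ref{thm:6} gives $\alpha_\tau=1$ for all sufficiently large $\tau$. Because $C_2\geq 3C_1\geq C_1$, the stated $b$-bound also implies condition \eqref{equ:b:cond} of Lemma~\ref{lem:3} (the two thresholds share the factor $(3/2-\beta)/(1/2-\beta)$, so the implication reduces to $C_2\geq C_1$); hence Lemma~\ref{lem:3} and then Theorem~\ref{thm:8} apply, yielding $\Psi^{\tau+1}\leq 3C_1\rho^b\Psi^\tau$ for all large $\tau$. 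Finally $3C_1\leq C_2$ upgrades this to $\Psi^{\tau+1}\leq C_2\rho^b\Psi^\tau$, as claimed.

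The proof involves no new analysis; all the substance resides in Theorems~\ref{thm:3}, \ref{thm:5}, \ref{thm:6}, and \ref{thm:8}, together with Lemma~\ref{lem:3}. The only point requiring care is committing to one $C_2$, independent of $\eta_1,\eta_2,\beta$, that simultaneously dominates the descent/unit-stepsize constant $9C\Upsilon^2/\gamma_G$ and the local-recursion constant $3C_1$. This is feasible precisely because each threshold on $b$ that I invoke has the common form $\log(\text{const}\cdot\eta_1/\eta_2)/\log(1/\rho)$ with the same factor $(3/2-\beta)/(1/2-\beta)\geq 3$, so enlarging $C_2$ only strengthens the requirement on $b$ while leaving it in the stated form; I expect the bookkeeping of these constants to be the main (and only) obstacle.
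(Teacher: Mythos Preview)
Your proposal is correct and follows essentially the same approach as the paper, which simply states that the result follows from Theorems~\ref{thm:5}, \ref{thm:6}, \ref{thm:8}, Lemma~\ref{lem:3}, and a proper rescaling of $C_2$. Your explicit choice $C_2=\max\{9C\Upsilon^2/\gamma_G,\,3C_1\}$ and verification that the single $b$-threshold dominates those in \eqref{cond:1}, \eqref{cond:2}, and \eqref{equ:b:cond} is exactly the bookkeeping the paper omits.
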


\begin{proof}
By Theorems \ref{thm:5}, \ref{thm:6}, \ref{thm:8}, Lemma \ref{lem:3} and rescaling $C_2$ properly, we complete the~proof.
\end{proof}

The result in \eqref{equ:linear} matches the local result \eqref{equ:local:result} proved in \cite{Na2022Convergence}; thus, we answer the Q2 raised in \cref{sec:1}---it is not necessary to solve the subproblems to optimality for achieving the local linear convergence. As mentioned earlier, as long as the algorithmic rate is faster than linear (e.g., one Newton step for each subproblem), the local linear rate, induced by the decay of the~sensitivity of perturbations, is always achieved. We should also mention that both the Schwarz scheme and FOTD have linear rates of the form $C\rho^b$. Since the analyses of both algorithms only~claim the~existence~of some constants $C>0$ and $\rho\in(0,1)$ (see Theorem \ref{thm:9} and \cite[Theorem 8]{Na2022Convergence}), we can always use~larger constants between the two algorithms to make their linear rates identical. In fact, comparing their constant $C$ can be difficult since it depends on the sharpness of the derivation and various quantities of the problem (e.g., $\Upsilon,\gamma_{G}$ etc.). However, the constant $\rho$ is the same, and is from \cite[Theorem~5.7]{Na2020Exponential}. More importantly, both their linear rates decay exponentially in the overlap size $b$.

\section{Numerical experiments}\label{sec:7}

We first conduct a numerical experiment on a toy NLDP in \cite{Na2023Superconvergence}:\vskip-0.5cm
\begin{subequations}\label{pro:5}
\begin{align}
\min_{\bx, \bu}\;\; & \sum_{k=0}^{N-1}\cbr{2\cos(x_k-d_k)^2 + C_1(x_k-d_k)^2 - C_2(u_k-d_k)^2} + C_1x_N^2, \label{pro:5a}\\
\text{s.t.}\;\; & x_{k+1} = x_k + u_k + d_k, \quad \forall k\in[N-1], \label{pro:5b}\\
& x_0 = 0. \label{pro:5c}
\end{align}
\end{subequations}\vskip-0.1cm
\noindent Here, $n_x=n_u = 1$, and references $\{d_k\}$ are specified later. As checked in \cite{Na2023Superconvergence}, if $C_1-2>4|C_2|$, then $Z^T(\bz)H(\bz, \blambda)Z(\bz)\succeq (C_1-2 - 4|C_2|)/4\cdot I \;\; \forall \bz, \blambda$. Thus, we simply let $\hH^\tau = H^\tau$ in implementation.

We implement seven methods: one centralized method IPOPT \cite{Waechter2005implementation} (which is our baseline), and~six parallel methods including the proposed FOTD, the Schwarz method \cite{Na2022Convergence}, the direct \mbox{multiple}~shooting method \cite{Bock1984Multiple}, the iterative differential dynamic \mbox{programming}~(IDDP) method \cite{Tassa2012Synthesis}, ILQR \cite{Li2007Iterative},~and ADMM \cite{ODonoghue2013splitting}. The scheme of IDDP is almost the same as ILQR, except that the control variables of QPs are computed by rolling out the policies along the original nonlinear dynamics rather than~the linearized dynamics \cite{Roulet2022Iterative}. Since the dynamics \eqref{pro:5b} are linear, IDDP and ILQR are identical in~this case. We will study a temperature control problem of thin plates later,~where~we~then~have nonlinear dynamics. We aim to demonstrate three points in this experiment: (i) FOTD converges~globally while the Schwarz may not converge \textit{within a reasonable computational budget} for some initializations. (ii) FOTD exhibits at least linear convergence locally, and the larger overlap size $b$ leads~to the faster convergence. (iii) FOTD is a superior parallel method. It is as competitive as the centralized solver IPOPT, and robust to the penalty parameter $\mu$ (cf. \eqref{pro:4a}). To illustrate the first~point,~we generate random initial iterates that are shared by all methods, and see if each method converges~for all initializations. To illustrate the second point, we plot $\|\nabla\mL^\tau\|$ v.s. $\tau$ for FOTD with different $b$, and see how $\|\nabla\mL^\tau\|$ behaves on the tail. To illustrate the third point, we compare the KKT~residual $\|\nabla\mL^\tau\|$ and running  time for all methods, and vary $\mu$ for FOTD to test its robustness.

\vskip 4pt
\noindent{\bf Simulation setting:} We consider three cases in Table \ref{tab:1}. For all parallel methods, we use the same horizon decomposition; that is, we decompose $[0,N]$ evenly with length $M$ for each (exclusive)~short interval. Different from the methods of multiple shooting, ILQR, IDDP, and ADMM, the Schwarz and FOTD have overlaps between two successive short intervals. We vary the overlap size $b$ from $\{1,5,25\}$. For FOTD, we let $\eta_1=10$, $\eta_2=\beta=0.1$, and vary $\mu$~in a wide range $\{1,25,125\}$. The~setup of $\mu$ is shared by the Schwarz and ADMM, both of which also require the penalty parameter. When doing the backtracking line search, we decrease the stepsize by a factor of $0.9$ each time until the line search condition is satisfied. For each case in Table \ref{tab:1} and each setup of $b$ and $\mu$, we generate 5 initial iterates for FOTD that are shared by other methods: one is $(\bz^0, \blambda^0) = (\0,\0)$ and the~other four are from $x_k^0,u_k^0,\lambda_k^0\stackrel{iid}{\sim}\text{Uniform}(-10^5,10^5)$ (with $x_0^0 = 0$).~For~those~methods~that~have~to~solve 
nonlinear subproblems (i.e., the Schwarz, multiple shooting, ADMM), we apply Julia/JuMP package \cite{Dunning2017JuMP} with IPOPT solver \cite{Waechter2005implementation}. For all methods except ADMM, we stop the iteration if \vskip-0.3cm
\begin{equation}\label{equ:stop:cond}
\|\nabla\mL^\tau\|\leq 10^{-6}\quad\quad \text{ OR }\quad\quad \|(\bz^{\tau+1}-\bz^\tau; \blambda^{\tau+1}-\blambda^\tau)\|\leq 10^{-6}.
\end{equation}\vskip-0.1cm
\noindent Since FOTD, ILQR, and IDDP have comparable computations in each iteration (i.e., they all~solve QPs), we regard them~as \textit{converged} if they trigger the condition \eqref{equ:stop:cond} within 40 iterations budget (we see from Figure \ref{fig:2} that FOTD actually needs much less iterations). For the multiple shooting~and Schwarz that solve NLDPs, we reduce the iteration budget a bit to 30 for sake of~a~\mbox{comparable}~total computation cost. For ADMM, we observe in our experiment that its KKT sequence has a long~flat tail (see \cite[Figure 6]{Na2022Convergence}). Thus, we prefer to stop the ADMM iteration early by relaxing the condition \eqref{equ:stop:cond} to $\|\nabla\mL^\tau\|\leq 10^{-6}$ OR $\|(\bz^{\tau+1}-\bz^\tau; \blambda^{\tau+1}-\blambda^\tau)\|\leq 10^{-3}$, and increase its iteration budget to~100. For the methods that do not compute $\blambda$ (i.e., multiple shooting, ADMM,~ILQR, IDDP),~we~let~$\blambda^\tau = - (G^\tau(G^\tau)^T)^{-1}G^\tau\nabla g(\bz^\tau)$ when evaluating $\nabla\mL^\tau$. Note that $\blambda^\tau\rightarrow\tlambda$ as $\bz^\tau\rightarrow\tz$.
In addition to the above settings, we try three linear system solvers for FOTD: one is sparse LU (which is a default choice and adopted by other methods), and the other~two~are~\mbox{generalized}~\mbox{minimal}~\mbox{residual}~(GMRES) and induced dimension reduction (IDR) methods implemented in Julia/IterativeSolvers package.

\vskip-0.6cm
\begin{table}[!htp]
\centering
\captionsetup{justification=centering}
\caption{Simulation setups}\label{tab:1}
\begin{tabular}{ |c|c|c|c|c| }
\hline
Cases  & $N$ &  $M$  & $(C_1, C_2)$ & $d_k$  \\
\hline
Case 1 & $5000$ &  $50$  &  $(8, 1)$ & $1$ \\
\hline
Case 2 & $5000$ & $100$  & $(15, 3)$ & $100\sin(k)^2$ \\
\hline
Case 3  & $10000$ & $100$  & $(12, 2)$ & $5\sin(k)$ \\
\hline
\end{tabular}
\end{table}

\vskip-0.3cm
\noindent\textbf{Result summary:} First, we investigate whether different methods converge within the computational budget for all five initializations. We observe that the multiple shooting, ILQR, and FOTD converge by triggering \eqref{equ:stop:cond} for all three cases in Table \ref{tab:1} and for all initializations and setups.~The Schwarz converges for most of cases, but does not converge \textit{within~the~budget} for Case 2 with $b=1$. In particular, for this case, there are $2,4,5$ out of $5$ initializations that~the Schwarz does not trigger \eqref{equ:stop:cond} when setting $\mu = 1,25,125$, respectively. For ADMM, it converges for all three~cases~with~$\mu=1$, but does not converge for all three cases with $\mu=25$ and $125$. We do not claim that ADMM diverges with large $\mu$ (its KKT is actually below $10^{-3}$ with $>2000$ iterations), but we clearly see that ADMM is not as robust as the Schwarz and FOTD to the parameter $\mu$.

Second, we draw the KKT convergence plots for FOTD in Figure \ref{fig:2}. We apply the sparse LU~solver for solving QPs and, for Cases 1, 2, and 3, we take $\mu = 1$, $25$, and $125$ as examples respectively. We emphasize that the other setups of $\mu$ of each case have similar convergence behavior (as revealed by Tables \ref{tab:2}-\ref{tab:4}). From Figure \ref{fig:2}, we observe that FOTD converges for all initializations~with~\mbox{different} $\mu$ for different cases and exhibits between linear and superlinear convergence locally. Its performance is robust to $\mu$, and a larger $b$ generally leads to a faster convergence (cf. Figures \ref{Case31}-\ref{Case33}).~Our~observation is consistent with Theorem \ref{thm:9}.

\begin{figure}[tp]
\centering
\subfloat[Case 1, $b=1$]{\label{Case11}\includegraphics[width=5.6cm]{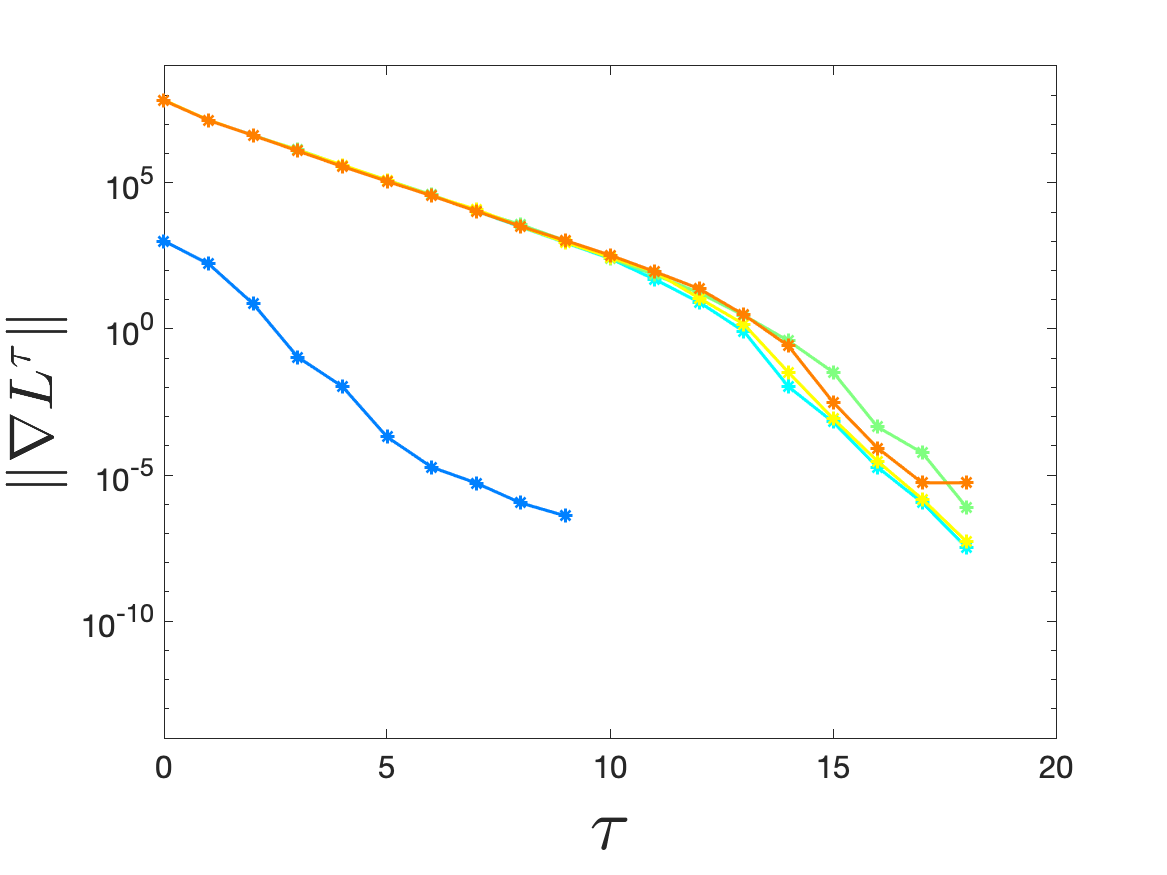}}
\subfloat[Case 1, $b=5$]{\label{Case12}\includegraphics[width=5.6cm]{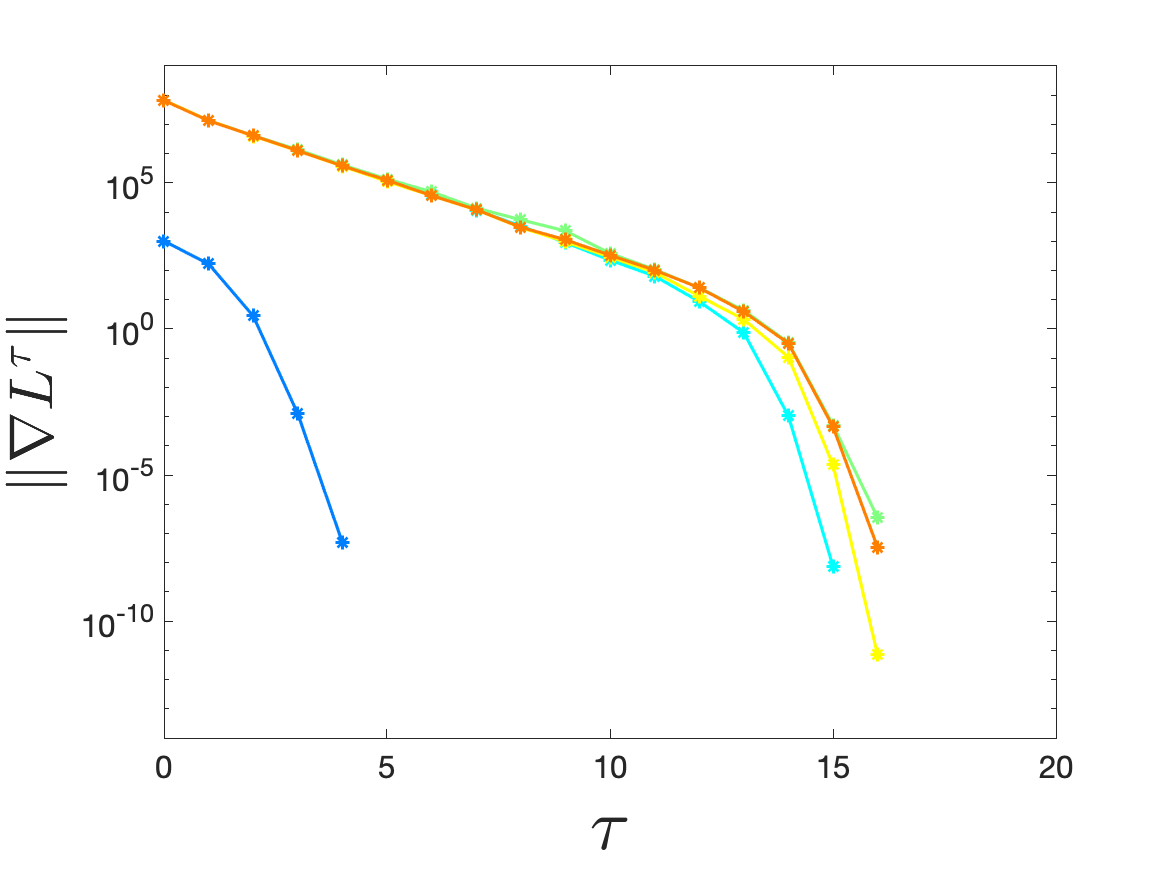}}
\subfloat[Case 1, $b=25$]{\label{Case13}\includegraphics[width=5.6cm]{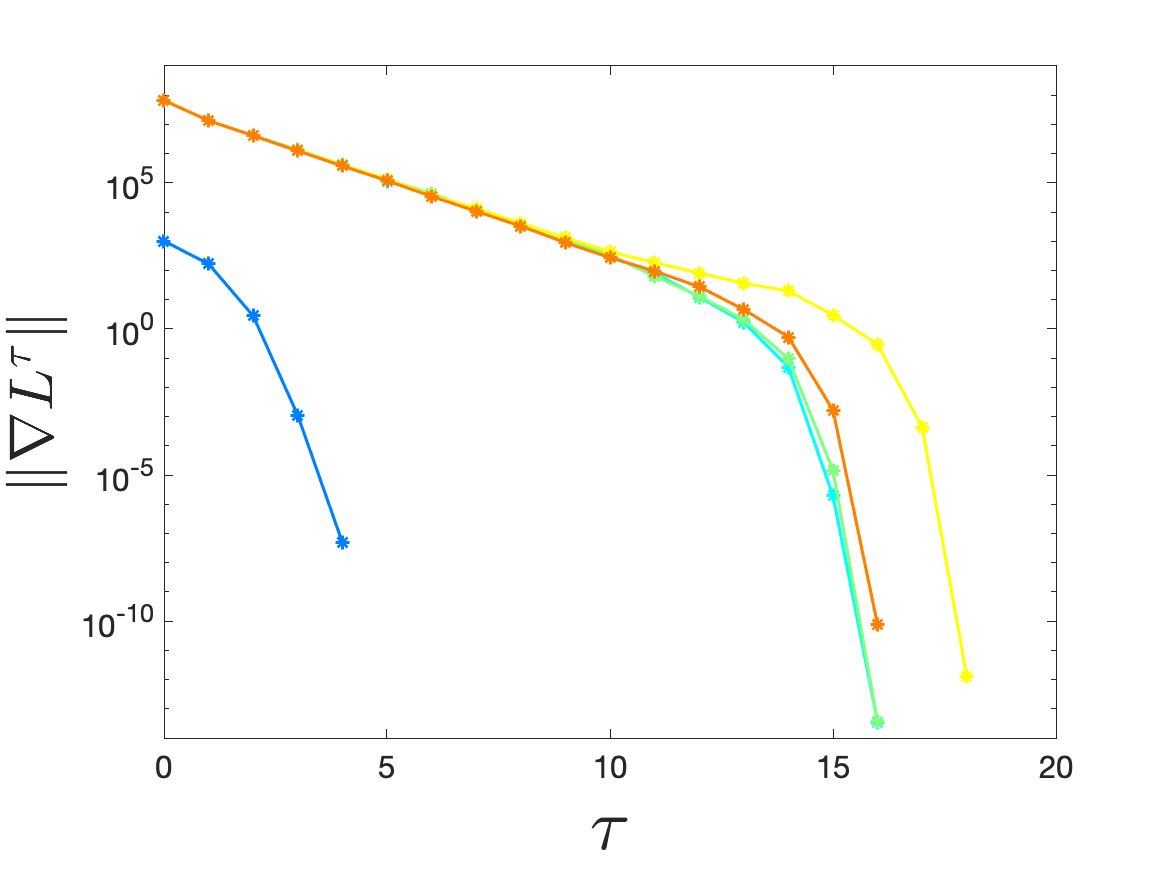}}
\vspace{-0.3cm}
\subfloat[Case 2, $b=1$]{\label{Case21}\includegraphics[width=5.6cm]{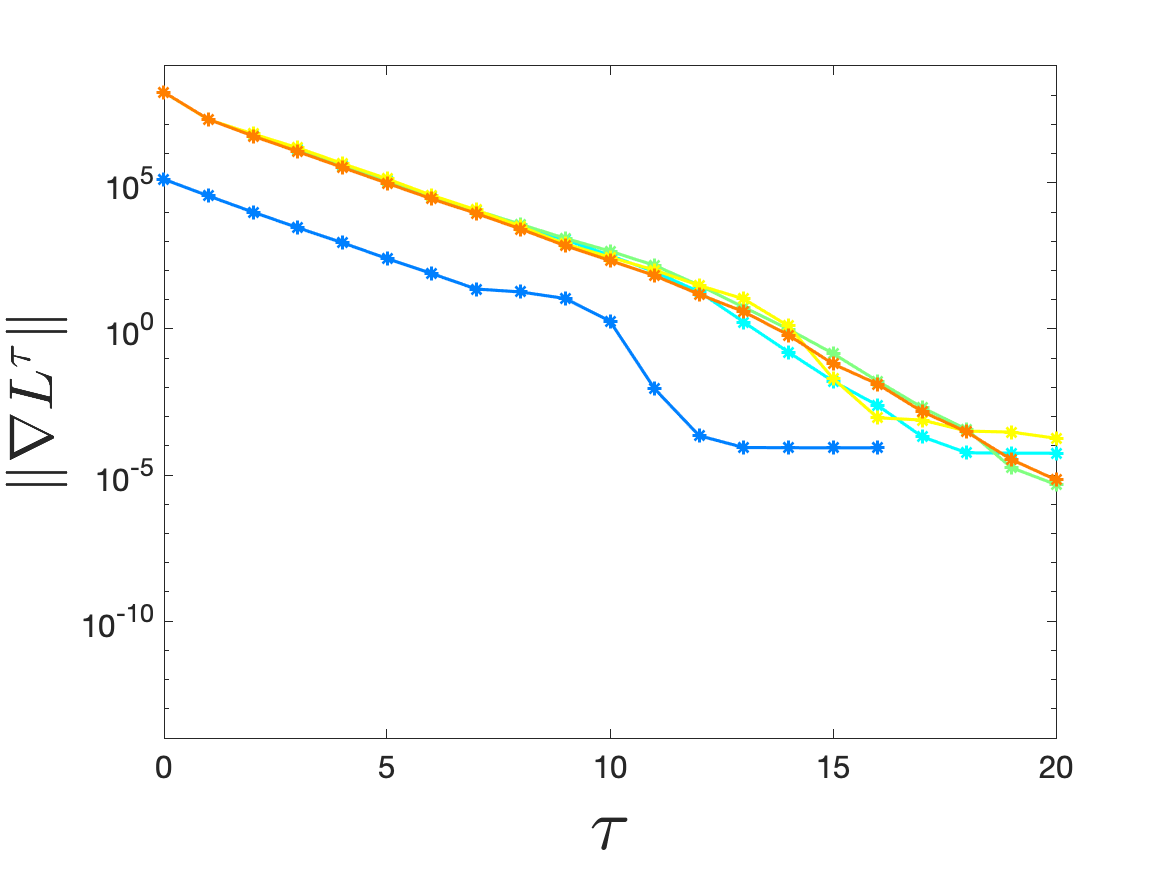}}
\subfloat[Case 2, $b=5$]{\label{Case22}\includegraphics[width=5.6cm]{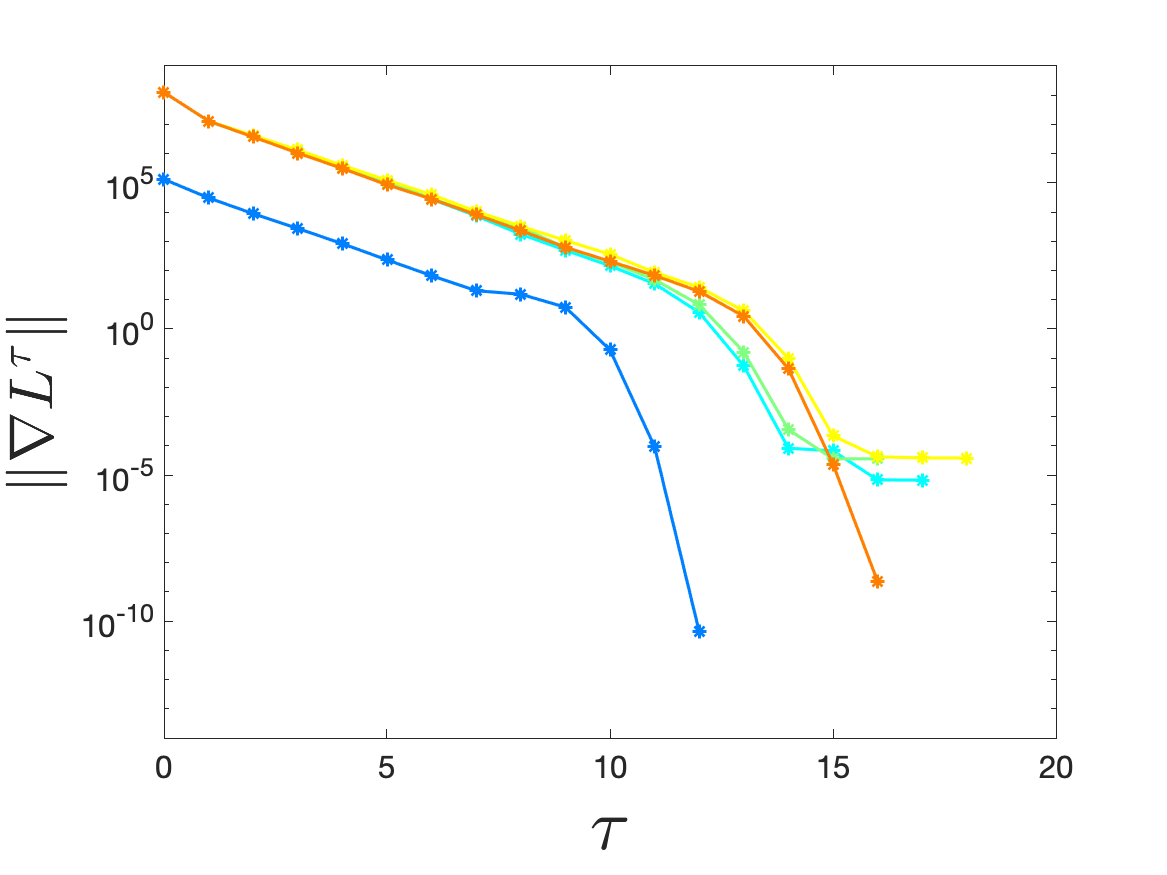}}
\subfloat[Case 2, $b=25$]{\label{Case23}\includegraphics[width=5.6cm]{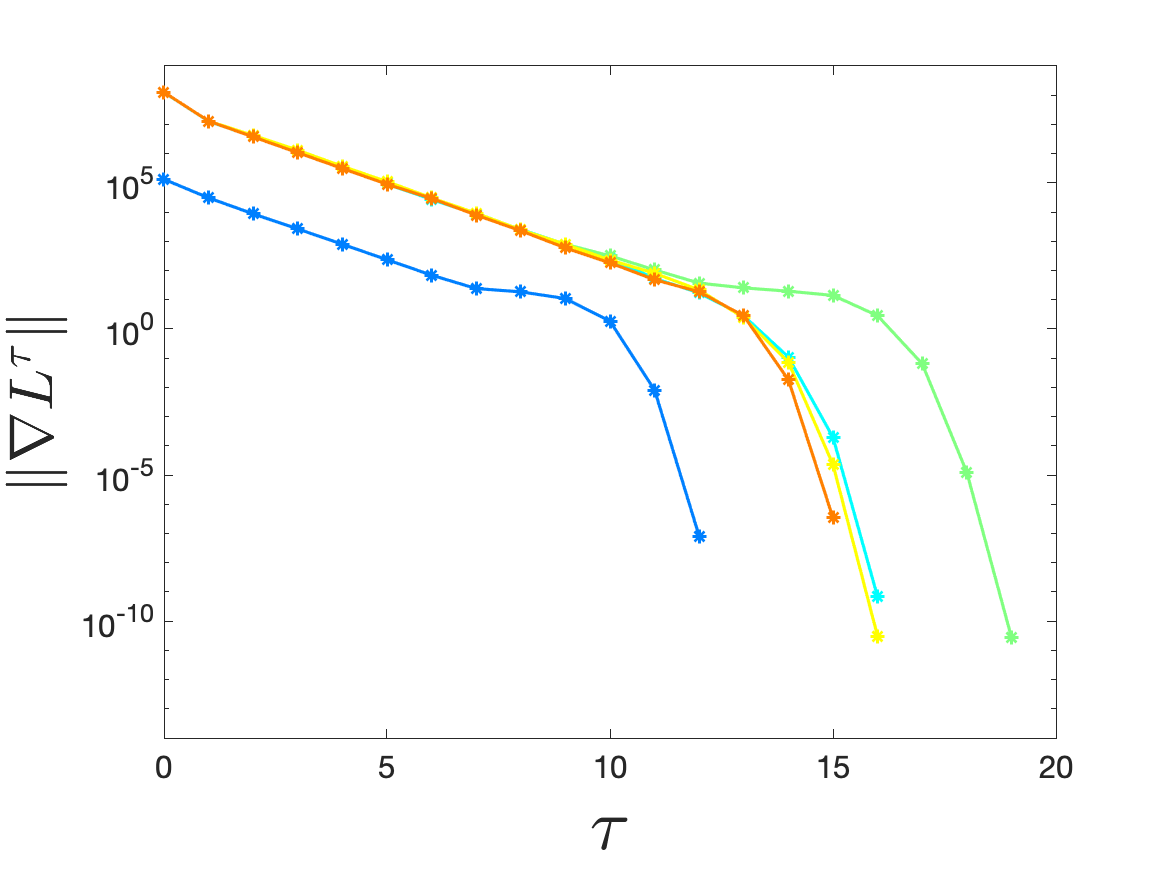}}
\vspace{-0.3cm}
\subfloat[Case 3, $b=1$]{\label{Case31}\includegraphics[width=5.6cm]{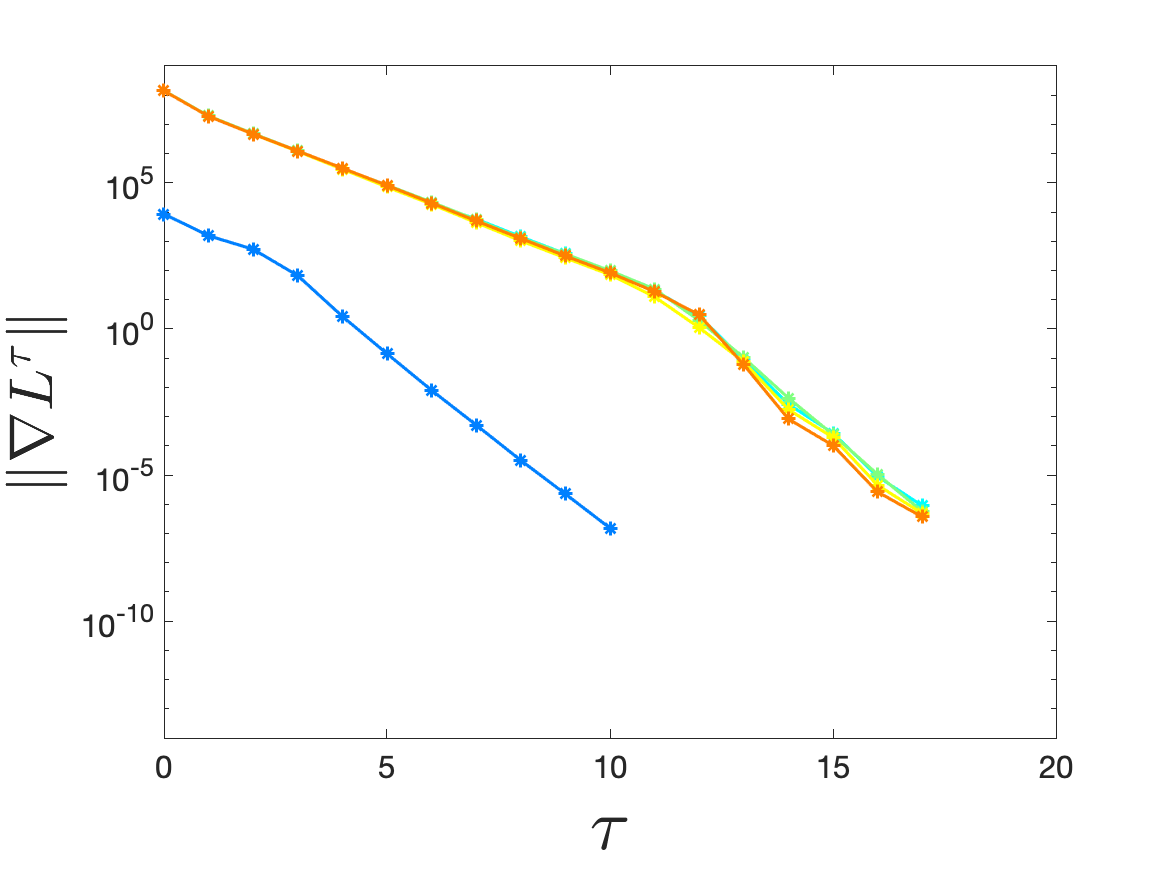}}
\subfloat[Case 3, $b=5$]{\label{Case32}\includegraphics[width=5.6cm]{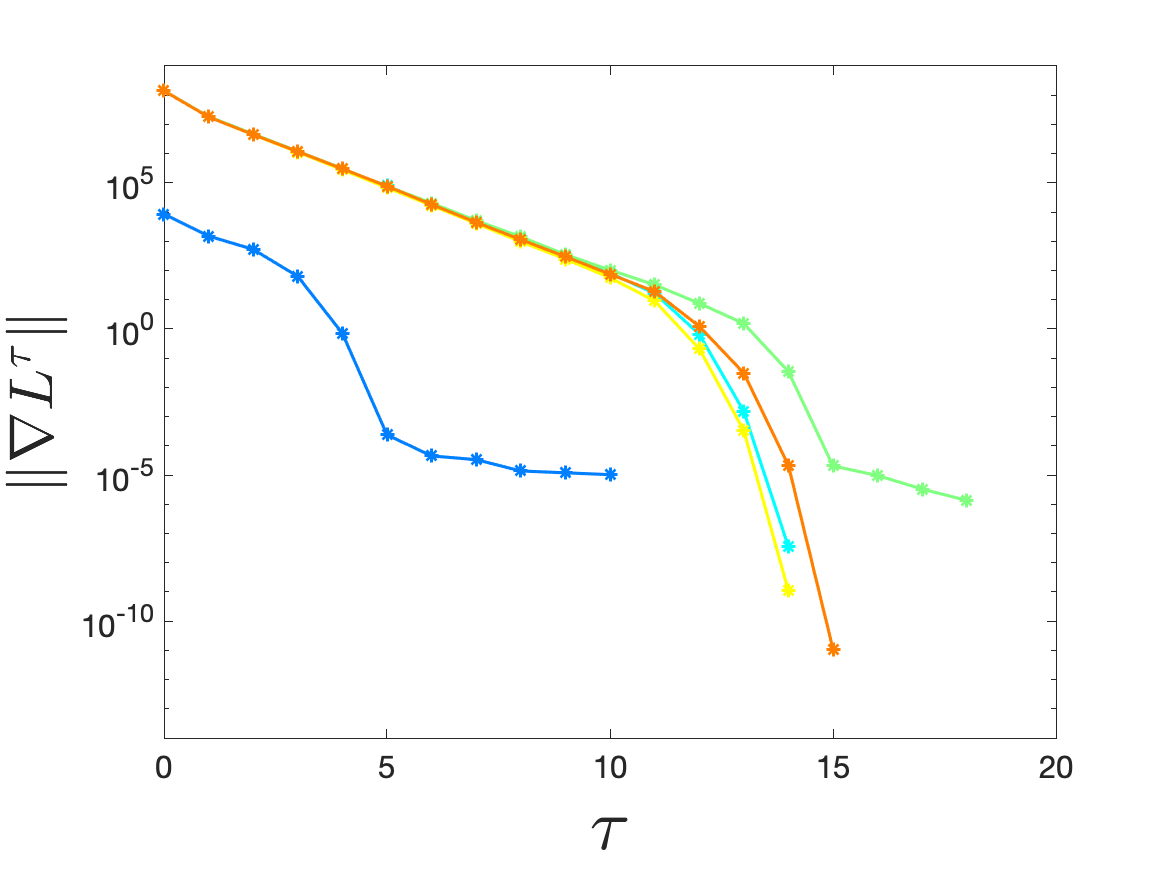}}
\subfloat[Case 3, $b=25$]{\label{Case33}\includegraphics[width=5.6cm]{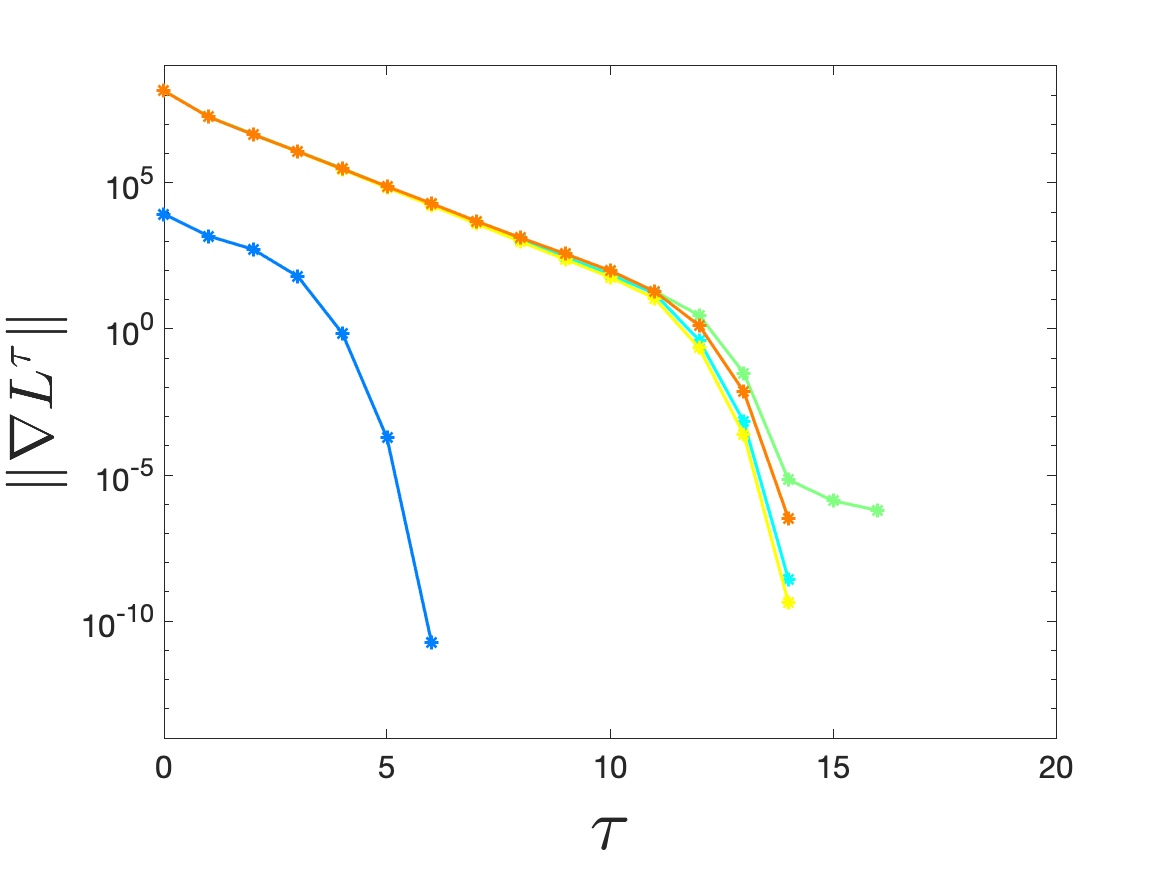}}
\caption{Convergence figures. The three plots on each row correspond to the same case in Table \ref{tab:1}, while the three plots on each column correspond to the same setup of $b$. Each plot has 5 lines corresponding to 5 initial iterates. The blue line that is separated from the other four lines corresponds to the initial point $(\bz^0, \blambda^0) = (\0,\0)$. We observe that FOTD converges for all initializations. It exhibits between linear and superlinear convergence locally, and a larger $b$ generally leads to a faster convergence.}\label{fig:2}
\end{figure}

\begin{table}[t]
\centering\caption{The KKT residual and running time for all methods implemented on Case 1. For the Schwarz and FOTD, the smallest KKT residual and least running time among different setups of $b$ are highlighted for each setup of $\mu$.}\label{tab:2}
\begin{tabular}{ |c|cc|ccc|ccc| }
\hline
Type &  \multicolumn{2}{c|}{Method} & \multicolumn{3}{c|}{KKT residual $(10^{-7})$} & \multicolumn{3}{c|}{Time (sec.)} \\
\hline
Centralized & \multicolumn{2}{c|}{IPOPT} & \multicolumn{3}{c|}{172.686} & \multicolumn{3}{c|}{0.851}\\
\hline
\multirow{16}{*}{Decomposed} & \multicolumn{2}{c|}{MultiShoot} &  \multicolumn{3}{c|}{23.375} & \multicolumn{3}{c|}{7.863}\\
\cline{2-9}
& \multicolumn{2}{c|}{ILQR} &  \multicolumn{3}{c|}{29.268} & \multicolumn{3}{c|}{7.047}\\
\cline{2-9}
& \multicolumn{2}{c|}{ADMM} &  \multicolumn{3}{c|}{2603.063} & \multicolumn{3}{c|}{36.125}\\
\cline{2-9}
& & & $b=1$ & $b=5$ & $b=25$ & $b=1$ & $b=5$ & $b=25$\\
\cline{4-9}
& \multirow{3}{*}{FOTD (sparse LU)} & $\mu=1$ & 13.324 & 0.878 & \textbf{0.0979} & 0.474 & \textbf{0.455} & 0.881\\
&  & $\mu=25$ & 4.828 & 1.618 & \textbf{0.0980} & 0.460 & \textbf{0.449} & 0.885\\
&  & $\mu=125$ & 7.210 & 0.328 & \textbf{0.0977} & 0.459 & \textbf{0.453} & 0.881\\
\cline{2-9}
& \multirow{3}{*}{FOTD (GMRES)} & $\mu=1$ & 4.686 & 0.672 & \textbf{0.549} & 0.566 & \textbf{0.537} & 1.019\\
&  & $\mu=25$ & 12.841 & 0.897 & \textbf{0.132} & 0.597 & \textbf{0.559} & 1.026\\
&  & $\mu=125$ & 4.997 & \textbf{0.251} & 0.451 & 0.621 & \textbf{0.604} & 1.041\\
\cline{2-9}
& \multirow{3}{*}{FOTD (IDR)} & $\mu=1$ & 4.051 & 0.275 & \textbf{0.106} & 0.619 & \textbf{0.468} & 0.875\\
&  & $\mu=25$ & 2.688 & 0.832 & \textbf{0.0982} & 0.500 & \textbf{0.467} & 0.896\\
&  & $\mu=125$ & 5.206 & 0.380 & \textbf{0.0978} & 0.536 & \textbf{0.491} & 0.878\\
\cline{2-9}
& \multirow{3}{*}{Schwarz} & $\mu=1$ & 0.448 & 0.298 & \textbf{0.0418} & \textbf{1.969} & 2.197 & 2.009\\
&  & $\mu=25$ & 1.451 & 0.454 & \textbf{0.0418} & 2.116 & 2.199 & \textbf{2.027}\\
&  & $\mu=125$ & 3.467 & 0.521 & \textbf{0.0422} & 2.173 & 2.185 & \textbf{2.160}\\
\hline
\end{tabular}
\end{table}

\begin{table}[h]
\centering\caption{The KKT residual and running time for all methods implemented on Case 2. For the Schwarz and FOTD, the smallest KKT residual and least running time among different setups of $b$ are highlighted for each setup of $\mu$. The dash ``-" means the scheme does not trigger \eqref{equ:stop:cond} within the budget.}\label{tab:3}
\begin{tabular}{ |c|cc|ccc|ccc| }
\hline
Type &  \multicolumn{2}{c|}{Method} & \multicolumn{3}{c|}{KKT residual $(10^{-7})$} & \multicolumn{3}{c|}{Time (sec.)} \\
\hline
Centralized & \multicolumn{2}{c|}{IPOPT} & \multicolumn{3}{c|}{31.227} & \multicolumn{3}{c|}{1.075}\\
\hline
\multirow{16}{*}{Decomposed} & \multicolumn{2}{c|}{MultiShoot} &  \multicolumn{3}{c|}{15.998} & \multicolumn{3}{c|}{10.126}\\
\cline{2-9}
& \multicolumn{2}{c|}{ILQR} &  \multicolumn{3}{c|}{26.685} & \multicolumn{3}{c|}{9.334}\\
\cline{2-9}
& \multicolumn{2}{c|}{ADMM} &  \multicolumn{3}{c|}{11841.583} & \multicolumn{3}{c|}{35.133}\\
\cline{2-9}
& & & $b=1$ & $b=5$ & $b=25$ & $b=1$ & $b=5$ & $b=25$\\
\cline{4-9}
& \multirow{3}{*}{FOTD (sparse LU)} & $\mu=1$ & 607.625 & 117.596 & \textbf{1.108} & 0.708 & \textbf{0.599} & 0.863\\
&  & $\mu=25$ & 364.783 & 106.214 & \textbf{0.848} & 0.699 & \textbf{0.601} & 0.906\\
&  & $\mu=125$ & 240.339 & \textbf{0.268} & 4.753 & 0.735 & \textbf{0.557} & 1.026\\
\cline{2-9}
& \multirow{3}{*}{FOTD (GMRES)} & $\mu=1$ & 798.660 & 115.055 & \textbf{56.258} & 1.224 & \textbf{1.084} & 1.503\\  
&  & $\mu=25$ & 193.723 & \textbf{5.581} & 12.745 & 1.265 & \textbf{1.077} & 1.652\\
&  & $\mu=125$ & 738.168 & 62.946 & \textbf{33.445} & 1.310 & \textbf{1.029} & 1.520\\
\cline{2-9}
& \multirow{3}{*}{FOTD (IDR)} & $\mu=1$ & 662.446 & 37.652 & \textbf{8.290} & 0.828 & \textbf{0.695} & 1.098\\
&  & $\mu=25$ & 310.175 & 48.570 & \textbf{7.662} & 0.848 & \textbf{0.724} & 1.203\\
&  & $\mu=125$ & 160.98 & \textbf{9.470} &  15.706 & 0.900 & \textbf{0.754} & 1.245\\
\cline{2-9}
& \multirow{3}{*}{Schwarz} & $\mu=1$ & 11.675 & 14.911 & \textbf{1.916} & 2.427 & 2.316 & \textbf{2.055}\\
&  & $\mu=25$ & 14.402 & 14.770 & \textbf{1.203} & 2.852 & 2.335 & \textbf{2.057}\\
&  & $\mu=125$ & - & 22.714 & \textbf{9.227} & - & 2.229 & \textbf{2.155}\\
\hline
\end{tabular}
\end{table}

\begin{table}[h]
\centering\caption{The KKT residual and running time for all methods implemented on Case 3. For the Schwarz and FOTD, the smallest KKT residual and least running time among different setups of $b$ are highlighted for each setup of $\mu$. }\label{tab:4}
\begin{tabular}{ |c|cc|ccc|ccc| }
\hline
Type &  \multicolumn{2}{c|}{Method} & \multicolumn{3}{c|}{KKT residual $(10^{-7})$} & \multicolumn{3}{c|}{Time (sec.)} \\
\hline
Centralized & \multicolumn{2}{c|}{IPOPT} & \multicolumn{3}{c|}{285.911} & \multicolumn{3}{c|}{1.405}\\
\hline
\multirow{16}{*}{Decomposed} & \multicolumn{2}{c|}{MultiShoot} &  \multicolumn{3}{c|}{5.652} & \multicolumn{3}{c|}{18.571}\\
\cline{2-9}
& \multicolumn{2}{c|}{ILQR} &  \multicolumn{3}{c|}{19.983} & \multicolumn{3}{c|}{15.732}\\
\cline{2-9}
& \multicolumn{2}{c|}{ADMM} &  \multicolumn{3}{c|}{28467.487} & \multicolumn{3}{c|}{38.571}\\
\cline{2-9}
& & & $b=1$ & $b=5$ & $b=25$ & $b=1$ & $b=5$ & $b=25$\\
\cline{4-9}
& \multirow{3}{*}{FOTD (sparse LU)} & $\mu=1$ & 69.097 & 159.907 & \textbf{1.471} & 1.067 & \textbf{1.048} & 1.480\\
&  & $\mu=25$ & 23.867 & 6.988 & \textbf{0.714} & 1.161 & \textbf{0.942} & 1.480\\
&  & $\mu=125$ & 4.944 & 23.393 & \textbf{1.906} & 1.066 & \textbf{1.038} & 1.513\\
\cline{2-9}
& \multirow{3}{*}{FOTD (GMRES)} & $\mu=1$ & 4.293 & 28.472 & \textbf{0.0544} & 1.382 & \textbf{1.373} & 1.903\\
&  & $\mu=25$ & 47.298 & 12.676 & \textbf{0.624} & 1.490 & \textbf{1.284} & 1.877\\
&  & $\mu=125$ & \textbf{5.838} & 11.084 & 17.412 & 1.559 & \textbf{1.355} & 1.897\\
\cline{2-9}
& \multirow{3}{*}{FOTD (IDR)} & $\mu=1$ & 144.449 & 5.971 & \textbf{3.612} & 1.110 & \textbf{0.989} & 1.571\\
&  & $\mu=25$ & 40.627 & 0.735 & \textbf{0.532} & 1.147 & \textbf{1.020} & 1.557\\
&  & $\mu=125$ & 31.889 & 29.349 &  \textbf{0.0760} & 1.140 & \textbf{1.062} & 1.574\\
\cline{2-9}
& \multirow{3}{*}{Schwarz} & $\mu=1$ & 2.363 & 1.691 & \textbf{0.0154} & 4.914 & 3.994 & \textbf{2.686}\\
&  & $\mu=25$ & 1.383 & 0.134 & \textbf{0.0166} & 4.441 & 3.485 & \textbf{2.711}\\
&  & $\mu=125$ & 3.127 & 0.182 & \textbf{0.0155} & 4.937 & 3.536 & \textbf{2.715}\\
\hline
\end{tabular}
\end{table}

\begin{table}[h]
\small
\centering\caption{The KKT residual and running time for all methods implemented on the temperature control problem. For the Schwarz and FOTD, the smallest KKT residual and least running time among different setups of $b$ are highlighted for each setup of $\mu$. For ADMM, the best results among different $\mu$ are highlighted.}\label{tab:5}
\begin{tabular}{ |c|cc|ccc|ccc| }
\hline
Type &  \multicolumn{2}{c|}{Method} & \multicolumn{3}{c|}{KKT residual $(10^{-7})$} & \multicolumn{3}{c|}{Time (sec.)} \\
\hline
Centralized & \multicolumn{2}{c|}{IPOPT} & \multicolumn{3}{c|}{0.889} & \multicolumn{3}{c|}{16.934}\\
\hline
\multirow{17}{*}{Decomposed} & \multicolumn{2}{c|}{MultiShoot} &  \multicolumn{3}{c|}{3937.071} & \multicolumn{3}{c|}{104.021}\\
\cline{2-9}
& \multicolumn{2}{c|}{ILQR} &  \multicolumn{3}{c|}{2941.207} & \multicolumn{3}{c|}{89.655}\\
\cline{2-9}
& \multicolumn{2}{c|}{IDDP} &  \multicolumn{3}{c|}{2619.498} & \multicolumn{3}{c|}{93.832}\\
\cline{2-9}
& \multicolumn{2}{c|}{\multirow{2}{*}{ADMM}} & $\mu=1$ & $\mu=25$ & $\mu=125$ &  $\mu=1$ & $\mu=25$ & $\mu=125$\\
\cline{4-9}
& & & \textbf{1680.350 }& 11684.271 & 61455.896 & \textbf{76.732} & 224.930 & 479.194\\
\cline{2-9}
& & & $b=1$ & $b=5$ & $b=25$ & $b=1$ & $b=5$ & $b=25$\\
\cline{4-9}
& \multirow{3}{*}{FOTD (sparse LU)} & $\mu=1$ & 13.621 & 13.488 & \textbf{8.050} & 16.914 & \textbf{15.134} & 25.531\\
&  & $\mu=25$ & 28.266 & 8.024 & \textbf{5.316} & 19.973 & \textbf{18.023} & 28.752\\
&  & $\mu=125$ & 11.448 & 3.112 & \textbf{2.712} & 19.416 & \textbf{18.356} & 28.841\\
\cline{2-9}
& \multirow{3}{*}{FOTD (GMRES)} & $\mu=1$ & 17.837 & 15.228 & \textbf{2.368} & \textbf{16.653} & 16.729 & 21.293\\
&  & $\mu=25$ & 61.917 & 6.623 & \textbf{0.573} & 21.023 & \textbf{17.350} & 22.830\\
&  & $\mu=125$ & 15.052 & 2.937 & \textbf{1.704} & 23.336 & \textbf{17.871} & 23.571\\
\cline{2-9}

& \multirow{3}{*}{FOTD (IDR)} & $\mu=1$ & 14.934 & 13.515 & \textbf{3.998} & \textbf{17.106} & 17.425 & 21.393\\
&  & $\mu=25$ & 33.351 & \textbf{6.954} & 8.766 & 18.459 & \textbf{16.515} & 21.888\\
&  & $\mu=125$ & 28.830 & 8.553 & \textbf{7.119} & 20.244 & \textbf{17.318} & 22.750\\
\cline{2-9}
& \multirow{3}{*}{Schwarz} & $\mu=1$ & 10.995 & 8.750 & \textbf{1.303} & 23.873 & \textbf{21.155} & 21.724\\
&  & $\mu=25$ & 12.976 & 4.260 & \textbf{2.991} & 24.503 & \textbf{22.410} & 23.306\\
&  & $\mu=125$ & 5.124 & 3.501 & \textbf{1.996} & 23.506 & \textbf{20.431} & 22.359\\
\hline
\end{tabular}
\end{table}

Third, we report the KKT residual and running time for all methods. We average the results~over the convergent runs among five runs (corresponding to five initializations). Since ADMM converges within the budget for $\mu=1$ only, we report its results under this setup. The results for Cases 1, 2, and 3 are summarized in Tables \ref{tab:2}, \ref{tab:3}, and \ref{tab:4}, respectively. From the tables, we have the following observations. (i) The proposed FOTD and Schwarz outperform other parallel methods such as the multiple shooting, ILQR, and ADMM, among which ADMM has the worst performance. We~believe the reasons are two folds. First, the Schwarz \cite{Na2022Convergence} and FOTD employ an overlapping decomposition. The overlaps facilitate the information exchange between subproblems, and effectively suppress the system perturbations brought by the horizon truncation and different choices of $\mu$. Second, as also observed in \cite[Figure 6]{Na2022Convergence}, the ADMM iterates often generate a small stepsize, which is less~effective than the stepsize that is selected by the line search. As the augmented Lagrangian method, ADMM also suffers when it is initialized with a poor penalty parameter and/or poor~Lagrange~multipliers, especially for nonconvex problems \citep{Curtis2014adaptive}. Note that our objective coefficient for the control variables in \eqref{pro:5a} is $-C_2$ with $C_2>0$; thus, \eqref{pro:5}~is~nonconvex even if we have a linear system in the constraints. 
(ii) With three different (but efficient) QP solvers, FOTD performs equally well, although GMRES has slightly longer running time (within 0.3 sec.) than the other two solvers. Thus, different~linear system solvers can be employed~in FOTD to accelerate its computation. (iii) Between Schwarz~and FOTD, the Schwarz tends to attain a smaller KKT residual than FOTD, which is more significant when the overlap size $b$ is as small as $1$. For the majority of cases, both methods attain the smallest KKT residual when $b$ is as large as 25,~while attain the largest KKT residual when $b$ is as~small as 1. As for the running time, FOTD consistently converges faster than the Schwarz for different~choices of $\mu$ and $b$. The running time of FOTD is comparable to that of the centralized solver IPOPT. For the three choices~of $b$, the Schwarz tends to converge faster for a large $b$ than for moderate or~small $b$. This is because that the Schwarz performs less iterations when $b$ is large even if solving each~subproblem is also~more~\mbox{expensive. On the contrary, FOTD converges faster for a moderate $b$, which} reveals the trade-off between the total number of iterations and the computation cost of a single iteration. Overall, our experiments demonstrate that both FOTD and Schwarz are superior~\mbox{parallel} methods and robust to the parameter $\mu$. FOTD is more efficient than the Schwarz, and is as efficient as the popular solver IPOPT. However, as the parallel method, FOTD (and Schwarz) offers more flexibility to the computing environments and can be applied when a single high-speed processor is not accessible.

\vskip4pt
\noindent\textbf{Thin plate temperature control:} We apply FOTD on a thin plate temperature control problem studied in \cite{Na2022Convergence}. We refer to \cite{MathWorks2022Nonlinear, Buikis1999mathematical, Buikis1999mathematicala} for the physical background of the problem. In particular, we let $k\in[0, 1]$ be the continuous time index and $\bomega \in \Omega = [0,1]\times [0,1]$ be the position in the domain~$\Omega$. Then, we consider the following problem
\begin{subequations}\label{pro:6}
\begin{align}
\min_{x, u}\;\; & \int_0^1\int_{\bomega\in \Omega} \cbr{(\bx(\bomega,k) - d(\bomega,k))^2 + u(\bomega,k)^2} d\bomega\; dk, \label{pro:6a}\\[3pt]
\text{s.t.}\;\; & \frac{\partial x(\bomega, k)}{\partial k} =  \nabla^2x(\bomega,k) + u(\bomega,k) + \frac{2h_c}{\kappa_c t_c}(T_c - x(\bomega,k)) + \frac{2\epsilon_c\sigma_c}{\kappa_c t_c}(T_c^4 - x(\bomega,k)^4), \label{pro:6b}\\[3pt]
& \hskip10cm  \forall k \in [0, 1], \forall\bomega\in\Omega,  \nonumber \\
& x(\bomega, k) = 0, \quad \forall (\bomega,k)\in \Omega\times\{0\} \text{ or } \partial\Omega\times [0,1], \label{pro:6c}
\end{align}
\end{subequations}\vskip-0.1cm
\noindent where $\nabla^2$ is the Laplace operator, that is, $\nabla^2x = \frac{\partial^2 x}{\partial \bomega_1^2} + \frac{\partial^2 x}{\partial \bomega_2^2}$, and $d(\bomega,k)$ in the objective \eqref{pro:6a}~is~the prespecified desired temperature. Here, the PDE constraints in \eqref{pro:6b} are governed by a controlled heat equation (i.e., the first two terms) with extra convection and radiation terms (i.e., the third~and fourth terms). All the symbols with a subscript ``c" are the prespecified constants. In particular,~$h_c$~is the convection coefficient; $\kappa_c$ is the thermal conductivity; $\epsilon_c$ is the emissivity coefficient; $\sigma_c$ is the Stefan-Boltzmann constant; $T_c$ is the ambient temperature; and $t_c$ is the plate thickness. We unify the coefficients of heat equation for simplicity. The initial and boundary conditions are in \eqref{pro:6c}.

In our implementation, we discretize $\Omega$ by a $4\times 4$ mesh grid, i.e. $2\times 2$ mesh grid in the interior. The temporal horizon is decomposed by 5000 evenly spaced knots with $50$ knots for each subproblem. The setups of all methods are as before, and we follow \cite{MathWorks2022Nonlinear} to set up the problem parameters. In particular, we let $h_c=1$, $\kappa_c=400$, $\epsilon_c = 0.5$, $\sigma_c=5.67\times 10^{-8}$, $T_c=300$, $t_c=0.01$, and let~$d(\bomega,k) = \sin(k)$. The KKT residual and running time of the methods are summarized in Table \ref{tab:5}. From~the table, we again observe that the Schwarz and FOTD outperform~other four parallel methods.~The Schwarz attains smaller KKT residual than FOTD, while FOTD~converges faster than the Schwarz. Overall, our experiment shows the superiority of the overlapping decomposition-based methods.

\section{Conclusion}\label{sec:8}

This paper proposes a fast overlapping temporal decomposition (FOTD) procedure for solving long-horizon NLDPs in \eqref{pro:1}. FOTD relies on the sequential quadratic programming (SQP) and incorporates SQP with OTD technique. We establish global convergence and~uniform, local linear convergence for FOTD. The local result matches \cite{Na2022Convergence}, while FOTD~requires fewer computations in each iteration (cf. Theorem \ref{thm:7}).

Considering the improvement of the performance of the Schwarz scheme over ADMM \cite{Na2022Convergence}, and the relation between FOTD and the Schwarz, we believe the extension of FOTD is worth studying. One of the drawbacks of FOTD is the separation of modifying the Hessian matrix and solving the linear-quadratic subproblems (steps 1 and 2 in \cref{sec:3}), which leads to two separate factorizations for the subproblem matrices. Such a drawback does not enlarge the flops order of a single processor, but indeed results in a suboptimal flop multiplier. A more desirable algorithm should perform the Hessian modification and solve the subproblem in a single machine jointly, with one factorization, such as parallel quasi-Newton scheme \cite{Byrd1988Parallel}. Further, we can embed OTD into more advanced SQP frameworks, such as the trust region-SQP. We can also replace the line search step by the filter~step, and study the behavior of the approximate direction obtained by OTD on the filter step.

In addition, FOTD can be applied on graph-structured problems, seeing that a similar exponential decay of sensitivity for graph-structured problems was established in \cite{Shin2022Exponential}. Finally, a reasonable conjecture for the improved performance of the Schwarz scheme and FOTD over ADMM (and~other parallel methods) is the~lack of information exchange among subproblems in ADMM. No overlaps are adopted in ADMM. Thus, whether we can embed OTD into ADMM to improve the performance 
of ADMM, and whether the OTD-based ADMM exhibits a similar convergence rate as~FOTD are interesting future research directions.

\section*{Acknowledgments.} 
This material was based upon work supported by the U.S. Department of Energy, Office of Science, Office of Advanced Scientific Computing Research (ASCR) under Contract DE-AC02-06CH11347 and by NSF through award CNS-1545046.


\begin{APPENDICES}

\section{Proofs of results in \cref{sec:2}}\label{appendix:1}

\subsection{Proof of Theorem \ref{thm:1}}\label{pf:thm:1}

We start by writing the KKT conditions of Problem \eqref{pro:1} and Problem \eqref{pro:2}. For $k\in[N-1]$, we let $A_k(\bz_k) = \nabla_{\bx_k}^Tf_k(\bz_k)$, $B_k(\bz_k) = \nabla_{\bu_k}^Tf_k(\bz_k)$ be the Jacobian matrices. Then the Lagrange function of \eqref{pro:1} is
\begin{equation}\label{equ:full:Lagrange}
\mL(\bz,\blambda) = \sum_{k=0}^{N-1} \{g_k(\bz_k) + \blambda_{k}^T\bx_k - \blambda_{k+1}^Tf_k(\bz_k)\}+ \{g_N(\bx_N) + \blambda_{N}^T\bx_N\} - \blambda_{0}^T\bbx_0.
\end{equation}
Thus, the KKT conditions of \eqref{pro:1} are
\begin{subequations}\label{equ:full:KKT}
\begin{align}
\nabla_{\bx_k}g_k(\bz_k) + \blambda_{k}  - A_k^T(\bz_k)\blambda_{k+1} =\; &\0, \quad  \forall k \in [N-1], \label{pequ:full:KKTa}\\
\nabla_{\bu_k}g_k(\bz_k) - B_k^T(\bz_k)\blambda_{k+1}  =\; &\0, \quad  \forall k \in [N-1], \label{pequ:full:KKTb}\\
\nabla_{\bx_N}g_N(\bz_N) + \blambda_{N}  =\; & \0, \label{pequ:full:KKTc}\\
\bx_{k+1} - f_k(\bz_k)  = \; & \0, \quad  \forall k\in[N-1], \label{pequ:full:KKTd}\\
\bx_0 - \bbx_0  = \; & \0. \label{pequ:full:KKTe}
\end{align}
\end{subequations}
Similarly, the KKT conditions of $\P^i_\mu(\bd_i)$ in \eqref{pro:2} are
\begin{subequations}\label{equ:subprob:KKT}
\begin{align}
\nabla_{\bx_k}g_k(\tbz_{i, k}) + \tblambda_{i, k} - A_k^T(\tbz_{i, k})\tblambda_{i, k+1} = \; & \0, \quad  \forall k \in [m_1, m_2), \label{equ:subprob:KKTa}\\
\nabla_{\bu_k}g_k(\tbz_{i, k}) - B_k^T(\tbz_{i, k})\tblambda_{i, k+1} = \; & \0, \quad  \forall k \in [m_1, m_2), \label{equ:subprob:KKTb}\\
\nabla_{\bx_{m_2}}g_{m_2}(\tbx_{i, m_2}, \bbu_{m_2}) + \tblambda_{i, m_2} - A_{m_2}^T(\tbx_{i, m_2}, \bbu_{m_2})\bblambda_{m_2+1} \nonumber\\
 + \mu(\tbx_{i, m_2} - \bbx_{m_2})=\; &\0, \label{equ:subprob:KKTc}\\
\tbx_{i, k+1} - f_k(\tbz_{i, k}) =\; & \0, \quad  \forall k\in[m_1, m_2), \label{equ:subprob:KKTd}\\
\tbx_{i, m_1} - \bbx_{m_1} =\; &\0. \label{equ:subprob:KKTe}
\end{align}
\end{subequations}
\noindent\textbf{(i)}. By Definition \ref{def:1}, $\mD_i(\tz, \tlambda) = (\tx_{m_1:m_2}, \tu_{m_1:m_2-1}, \tlambda_{m_1:m_2})$. Letting $\bd_i = \bd_i^\star$ in \eqref{equ:subprob:KKTc}, \eqref{equ:subprob:KKTe}, we see \eqref{equ:subprob:KKT} is a subsystem of \eqref{equ:full:KKT} so that $\mD_i(\tz, \tlambda)$ satisfies \eqref{equ:subprob:KKT}. Thus, the statement holds.

\noindent\textbf{(ii)}. Suppose $(\tbz_i, \tblambda_i) = (\tbzi(\bd_i), \tblambdai(\bd_i))$ satisfies conditions \eqref{equ:subprob:KKT}. Since $[n_i, n_{i+1})\subseteq[m_1, m_2)$, we know from \eqref{equ:subprob:KKTa}, \eqref{equ:subprob:KKTb}, \eqref{equ:subprob:KKTd} that $(\tbz_{i, n_i:n_{i+1}-1}, \tblambda_{i, n_i:n_{i+1}-1})$ satisfies
\begin{subequations}\label{pequ:1}
\begin{align}
\nabla_{\bx_k}g_k(\tbz_{i, k}) + \tblambda_{i, k} - A_k^T(\tbz_{i, k})\tblambda_{i, k+1} = \; & \0, \quad \forall k\in[n_i, n_{i+1}), \label{pequ:1a}\\
\nabla_{\bu_k}g_k(\tbz_{i, k}) - B_k^T(\tbz_{i, k})\tblambda_{i, k+1} = \; & \0, \quad \forall k\in[n_i, n_{i+1}), \label{pequ:1b}\\
\tbx_{i, k+1} - f_k(\tbz_{i, k}) = \; & \0, \quad \forall k\in[n_i, n_{i+1}), \label{pequ:1c}
\end{align}
\end{subequations}
which is a subset of \eqref{pequ:full:KKTa}, \eqref{pequ:full:KKTb}, \eqref{pequ:full:KKTd}. We consider the composed point $(\bz, \blambda) = \mC(\{(\tbz_i, \tblambda_i)\}_i)$. By Definition \ref{def:1}, we know
\begin{equation}\label{pequ:con}
\begin{aligned}
&\bz_k = \tbz_{i, k}, \quad \text{if } k \in[n_i, n_{i+1}) \text{ for some } i \in [M-1], \quad \quad \bz_{N} = \tbx_{M-1, N},\\
&\blambda_k = \tblambda_{i, k}, \quad \text{if } k \in[n_i, n_{i+1}) \text{ for some } i \in [M-1], \quad \quad \blambda_{N} = \tblambda_{M-1, N}.
\end{aligned}
\end{equation}
Thus, \eqref{pequ:full:KKTc} is implied by \eqref{equ:subprob:KKTc} for $i = M-1$, and \eqref{pequ:full:KKTe} is implied by \eqref{equ:subprob:KKTe} for $i = 0$. For~\eqref{pequ:full:KKTb}, we use the decomposition $[N-1] = \cup_{i=0}^{M-1}[n_i, n_{i+1})$. For any $k\in[N-1]$, we have two cases.

\noindent\textbf{(a)}. $k \neq n_{i+1}-1$, $\forall i\in[M-2]$. Then, $k\in[n_i, n_{i+1}-1)$ for some $i\in[M-2]$, or $k\in [n_{M-1}, n_M)$. Thus, $k+1\in[n_i+1, n_{i+1})$ for $i\in[M-2]$ or $k+1 \in [n_{M-1}+1, N]$. By \eqref{pequ:con}, we know for both cases that $\bz_k$ and $\blambda_{k+1}$ are from the same subproblem, i.e. $\bz_k = \tbz_{i, k}$ and $\blambda_{k+1} = \tblambda_{i, k+1}$. Thus, \eqref{pequ:full:KKTb} is implied by \eqref{pequ:1b}.

\noindent\textbf{(b)}. $k = n_{i+1}-1$ for $i\in[M-2]$. Then, $k + 1 = n_{i+1}$ and thus $\blambda_{k+1} = \tblambda_{i+1, k+1}$. Comparing \eqref{pequ:full:KKTb} with \eqref{pequ:1b}, we know that
\begin{align}\label{pequ:e:1}
\nabla_{\bu_k}g_k(\bz_k) - B_k^T(\bz_k)\blambda_{k+1} = \0 &\Longleftrightarrow \nabla_{\bu_k}g_k(\tbz_{i, k}) - B_k^T(\tbz_{i, k})\tblambda_{i+1, k+1} =\0 \nonumber\\
&\stackrel{\eqref{pequ:1b}}{\Longleftrightarrow} B_k^T(\tbz_{i, k})\tblambda_{i+1, k+1} = B_k^T(\tbz_{i, k})\tblambda_{i, k+1}.
\end{align}
Following the same derivation, we consider \eqref{pequ:full:KKTa} and can easily get
\begin{equation}\label{pequ:e:2}
\nabla_{\bx_k}g_k(\bz_k)+\blambda_k - A_k^T(\bz_k)\blambda_{k+1} = \0 \stackrel{\eqref{pequ:1a}}{\Longleftrightarrow} A_k^T(\tbz_{i, k})\tblambda_{i+1, k+1} = A_k^T(\tbz_{i, k})\tblambda_{i, k+1},
\end{equation}
if $k = n_{i+1}-1$ for $i\in[M-2]$. Finally, we consider \eqref{pequ:full:KKTd}, where we have two cases.

\noindent\textbf{(a)}. $k \neq n_{i+1}-1$, $\forall i\in[M-2]$. As before, $\bz_k$ and $\bx_{k+1}$ are from the same subproblem, so that \eqref{pequ:full:KKTd} is implied by \eqref{pequ:1c}.

\noindent\textbf{(b)}. $k = n_{i+1}-1$ for $i\in[M-2]$. Then, $\bx_{k+1} = \tbx_{i+1, k+1}$. Comparing \eqref{pequ:full:KKTd} with \eqref{pequ:1c}, we know
\begin{equation}\label{pequ:e:3}
\bx_{k+1} = f_k(\bz_k) \Longleftrightarrow \tbx_{i+1, k+1} = f_k(\tbz_{i, k}) \stackrel{\eqref{pequ:1c}}{\Longleftrightarrow} \tbx_{i+1, k+1} = \tbx_{i, k+1}.
\end{equation}
Combining \eqref{pequ:e:1}, \eqref{pequ:e:2}, and \eqref{pequ:e:3}, we complete the proof.

\section{Proofs of results in \cref{sec:4}}

\subsection{Proof of Lemma \ref{lem:0}}\label{pf:lem:0}

We suppress the iteration index $\tau$ since the result holds uniformly over~$\tau$. By Assumption \ref{ass:2}, we know that for any integer $k$, there exists an integer $t_k\in[1,t]$ such that $\Xi_{k, t_k}\Xi_{k, t_k}^T\succeq \gamma_{C}I$. Let us define the knots recursively by $k_{j+1} = k_j + t_{k_{j}}$, $j = 0,1,\ldots,J-1$ with $k_0 = 0$. We suppose $J$ is large enough such that $k_J>N-t$. Thus, we derive a horizon decomposition $[N-1] = \cup_{j= 0}^{J-1}[k_j, k_{j+1}-1]\cup [k_{J}, N-1]$. Since $G = \nabla^T_{\bz} f$, we can apply the definition of $f$ in \eqref{equ:def:gf}~and write $GG^T$ explicitly. We have
\begin{equation}\label{equ:GG}
GG^T = \left(\begin{smallmatrix}
I & -A_0^T\\
-A_0 & \substack{I + A_0A_0^T + B_0B_0^T} & \ddots \\
& \ddots & \ddots & - A_{N-1}^T\\
& & -A_{N-1} & \substack{I + A_{N-1}A_{N-1}^T + B_{N-1}B_{N-1}^T}
\end{smallmatrix}\right).
\end{equation}
We let $k_j' = k_{j+1}-1 = k_j+t_{k_j}-1$, and define matrices $\{\T_j\}_{j\in[J]}$ corresponding to each interval~$[k_j, k_j']$ as ($\T_J$ is defined similarly, except that the bottom corner block is $I + A_{N-1}A_{N-1}^T + B_{N-1}B_{N-1}^T$)
\begin{equation*}
\T_j = \left(\begin{smallmatrix}
I & -A_{k_j}^T\\
-A_{k_j} & \substack{I + A_{k_j}A_{k_j}^T + B_{k_j}B_{k_j}^T} & \ddots \\
& \ddots & \ddots & - A_{k_j'}^T\\
& & -A_{k_j'} & \substack{A_{k_j'}A_{k_j'}^T + B_{k_j'}B_{k_j'}^T}
\end{smallmatrix}\right), \quad \forall j\in[J-1].
\end{equation*}
By the expression in \eqref{equ:GG}, it suffices to show that each $\T_j$ is lower bounded away from zero. Then, we have $\lambda_{\min}(GG^T)\geq \min_{j\in[J]}\lambda_{\min}(\T_j)$. Let us consider $\T_J$ first. We let $A_{m}^n = A_{m}A_{m-1}\cdots A_n$ and define a matrix $\T_J^{1/2}$ as
\begin{align*}
\T_J^{1/2} \coloneqq & \left(\begin{smallmatrix}
I\\
-A_{k_J} & I & & & & -B_{k_J}\\
& \ddots & \ddots & & & &  \ddots\\
& & & I & & & & -B_{N-2}\\
& & & -A_{N-1} & I & & & & -B_{N-1}
\end{smallmatrix}\right) \\
=&  \left(\begin{smallmatrix}
I\\
-A_{k_J} & I \\
&  & \ddots \\
& & & I \\
& & & & I 
\end{smallmatrix}\right)\times\cdots \times\left(\begin{smallmatrix}
I\\
& I \\
&  & \ddots \\
& & & I \\
& & & - A_{N-1}& I 
\end{smallmatrix}\right) \times\left(\begin{smallmatrix}
I\\
& I & & & & -B_{k_J}\\
& & \ddots & & & \vdots&  \ddots\\
& & & I & & - A_{N-2}^{k_J+1}B_{k_J}& & -B_{N-2}\\
& & & & I & -A_{N-1}^{k_J+1}B_{k_J}& & -A_{N-1}B_{N-2}& -B_{N-1}
\end{smallmatrix}\right)\\
\eqqcolon& P_{k_J}\cdots P_{N-1}Q_J.
\end{align*}
Then, we use the fact that $Q_JQ_J^T\succeq I$ and have
\begin{align*}
\T_J = \T_{J}^{1/2}(\T_J^{1/2})^T = P_{k_J}\cdots P_{N-1}Q_JQ_J^TP_{N-1}^T\cdots P_{k_J}^T \succeq P_{k_J}\cdots P_{N-1}P_{N-1}^T\cdots P_{k_J}^T. 
\end{align*}
Since $\|A_k\|\leq \Upsilon_{upper}$ for $k\in[k_J, N-1]$, we know $ \|P_k^{-1}\| \leq 1+\Upsilon_{upper}$. Thus, $P_{k}P_k^T\succeq 1/(1+\Upsilon_{upper})^2I$. By the above display and using $N-k_J \leq t$, we have $\lambda_{\min}(\T_J)\succeq 1/(1+\Upsilon_{upper})^{2t}$. We then consider $\T_j$ for $j\in[J-1]$ similarly. By the same matrix multiplication, we have
\begin{equation*}
\T_j^{1/2} \coloneqq  \left(\begin{smallmatrix}
I\\
-A_{k_j} & I  & & & -B_{k_j}\\
& \ddots & \ddots &  & &  \ddots\\
& & & I & & &  -B_{k_j'-1}\\
& & & -A_{k_j'}  & & & & -B_{k_j'}
\end{smallmatrix} \right) =  P_{k_j}\cdots P_{k_j'}Q_j,
\end{equation*}
where (slightly different from $Q_J$)
\begin{equation*}
Q_j = \left(\begin{smallmatrix}
I\\
& I & & &  -B_{k_j}\\
&  & \ddots  & & \vdots&  \ddots\\
& & & I  & - A_{k_j'-1}^{k_j+1}B_{k_j}& & -B_{k_j'-1}\\
& & & & -A_{k_j'}^{k_j+1}B_{k_j}& & -A_{k_j'}B_{k_j'-1}& -B_{k_j'}
\end{smallmatrix} \right)\eqqcolon\begin{pmatrix}
I & Q_{j,1}\\
\0 & Q_{j,2}
\end{pmatrix}.
\end{equation*}
Since $Q_{j,2}$ is just the controllability matrix $\Xi_{k_j, t_{k_j}}$ (except that the components are in a reverse order), Assumption \ref{ass:2} implies that $Q_{j,2}Q_{j,2}^T\succeq \gamma_{C} I$. Furthermore, since $\max\{\|A_k\|,\; \|B_k\|\}\leq \Upsilon_{upper}$, $\forall k\in[k_j, k_j'-1]$,~we get
\begin{equation*}
\max\{\|Q_{j, 2}\|,\;\|Q_{j,1}\|\}\leq \Upsilon_{upper} + \Upsilon_{upper}^2 + \cdots\Upsilon_{upper}^{k_j'-k_j+1} \leq \sum_{i=1}^{t}\Upsilon_{upper}^i \leq \frac{\Upsilon_{upper}^{t+1}}{\Upsilon_{upper}-1},
\end{equation*}
where the second inequality is due to $t_{k_j}\leq t$. Finally, noting that
\begin{equation*}
Q_jQ_j^T = \begin{pmatrix}
I + Q_{j,1}Q_{j,1}^T & Q_{j, 1}Q_{j, 2}^T\\
Q_{j,2}Q_{j,1}^T & Q_{j,2}Q_{j,2}^T
\end{pmatrix},
\end{equation*}
we apply the algebra result in \cite[Lemma 4.8(ii)]{Na2020Exponential} and obtain
\begin{align*}
Q_jQ_j^T\succeq \rbr{\frac{\gamma_{C}}{\gamma_{C} + \frac{\Upsilon_{upper}^{t+1}}{\Upsilon_{upper}-1}} }^2\cdot \min\{1,\; \gamma_{C}\}\cdot I \eqqcolon \gamma_{Q}I.
\end{align*}
Thus, using $P_kP_k^T\succeq 1/(1+\Upsilon_{upper})^2 I$, we obtain
\begin{align*}
\T_j = \T_j^{1/2}(\T_j^{1/2})^T = P_{k_j}\cdots P_{k_j'}Q_jQ_j^TP_{k_j'}^T\cdots P_{k_j}^T \succeq \frac{\gamma_{Q}}{(1+\Upsilon_{upper})^{2t}}I.
\end{align*}
Since $\gamma_{Q}<1$, we let $\gamma_{G} = \gamma_{Q}/(1+\Upsilon_{upper})^{2t}$ and have $\lambda_{\min}(\T_j)\succeq \gamma_{G}$, $\forall j\in[J]$. This finishes the proof.

\subsection{Proof of Lemma \ref{lem:1}}\label{pf:lem:1}

We adapt the proof of \cite[Lemma 1 and Theorem 1]{Na2023Superconvergence}. We suppress the iteration index $\tau$. The reduced Hessian of $\LP_{\mu}^i$ is independent from $\bd_i$, which only affects linear terms. From \eqref{pro:4b}-\eqref{pro:4c}, we know that the Jacobian matrix of the constraints in $\LP_{\mu}^i$ has full row rank. Thus, it suffices to show that the reduced Hessian of $\LP_{\mu}^i$ is lower bounded by $\gamma_{RH} I$. Let
\begin{equation*}
\hH^{(i)} = \diag(\hH_{m_1}, \hH_{m_1+1}, \ldots, \hH_{m_2-1}, \hQ_{m_2} + \mu I)
\end{equation*}
be the Hessian of $\LP_{\mu}^i$. We only need to show that $\tbw_i^T\hH^{(i)}\tbw_i \geq \gamma_{RH} \|\tbw_i\|^2$ for any $\tbw_i = (\tbp_i, \tbq_i) \neq \0$ satisfying
\begin{subequations}\label{pequ:2}
\begin{align}
\tbp_{i, k+1} = & A_k\tbp_{i, k} + B_k\tbq_{i, k}, \quad k\in[m_1, m_2), \label{pequ:2a}\\
\tbp_{i, m_1} = & \0. \label{pequ:2b}
\end{align}
\end{subequations}
We take the last subproblem (i.e., $i=M-1$) as an example to illustrate the proof idea. For~$i=M-1$, we have $\mu = 0$. For any $\tbw_{M-1} = (\tbp_{M-1}, \tbq_{M-1}) \neq\0$ satisfying \eqref{pequ:2}, we extend $\tbw_{M-1}$ forward by filling with $\0$ to get a full-horizon vector $\bw$. That is~$\bw = (\0; \tbw_{M-1})$. We can verify that $\bw = (\bp, \bq) \in \{\bw: G\bw = \0, \bw \neq\0\}$. Therefore,
\begin{equation*}
\tbw_{M-1}^T\hH^{(M-1)}\tbw_{M-1} = \bw^T\hH\bw \geq \gamma_{RH}\|\bw\|^2 = \gamma_{RH}\|\tbw_{M-1}\|^2,
\end{equation*}
where the first and last equalities are due to the construction of $\bw$, and the middle inequality is~due to Assumption \ref{ass:1}. For $i\in [M-2]$, we consider the following two cases.

\noindent\underline{\bf Case 1}: $m_2 \geq N-t$. Given $\tbw_i = (\tbp_i, \tbq_i) \neq \0$ satisfying \eqref{pequ:2}, we can still extend it forward by filling with $\0$. However, extending backward with $\0$ will make the full vector $\bw$ outside the space $\{\bw: G\bw = \0\}$. Instead, we construct the following extension. We let $\bq_{m_2:N-1} = \0$, $\bp_{k+1} = A_k\bp_k$ for $k = [m_2, N)$. Thus,
\begin{equation}\label{pequ:cons:w}
\bw = (\0; \tbw_i; \0; \bp_{m_2+1}; \0; \bp_{m_2+2}\; \ldots; \0; \bp_N)\in \{\bw: G\bw = \0, \bw \neq\0\}.
\end{equation}
Moreover, by Assumption \ref{ass:3},
\begin{multline}\label{pequ:3}
\|\bp_{m_2+1:N}\|^2 = \sum_{k=m_2+1}^{N}\|\bp_k\|^2
\leq \sum_{j=1}^{N-m_2}\Upsilon_{upper}^{2j}\|\bp_{m_2}\|^2\\
= \frac{\Upsilon_{upper}^2(\Upsilon_{upper}^{2(N-m_2)} -  1)}{\Upsilon_{upper}^2-1}\|\bp_{m_2}\|^2 \leq \frac{\Upsilon_{upper}^{2t+2} - \Upsilon_{upper}^2}{\Upsilon_{upper}^2-1}\|\bp_{m_2}\|^2.
\end{multline}
The last inequality uses $N-m_2 \leq t$. Further,
\begin{align}\label{pequ:4}
\bw^T\hH\bw & \stackrel{\eqref{pequ:cons:w}}{=} \tbw_i^T\hH^{(i)}\tbw_i - \mu\|\bp_{m_2}\|^2 + \sum_{k=m_2+1}^{N}\bp_k^T\hQ_k\bp_k  \leq   \tbw_i^T\hH^{(i)}\tbw_i - \mu\|\bp_{m_2}\|^2 + \Upsilon_{upper}\|\bp_{m_2+1:N}\|^2 \nonumber\\
& \stackrel{\eqref{pequ:3}}{\leq}  \tbw_i^T\hH^{(i)}\tbw_i - \rbr{\mu - \frac{\Upsilon_{upper}(\Upsilon_{upper}^{2t+2} - \Upsilon_{upper}^2)}{\Upsilon_{upper}^2-1}}\|\bp_{m_2}\|^2,
\end{align}
where the second inequality is due to Assumption \ref{ass:3}. By Assumption \ref{ass:1}, $\bw^T\hH\bw \geq \gamma_{RH}\|\bw\|^2 \geq \gamma_{RH}\|\tbw_i\|^2$. Combining with \eqref{pequ:4}, we see that $\tbw_i^T\hH^i\tbw_i \geq \gamma_{RH} \|\tbw_i\|^2$ provided
\begin{equation}\label{pequ:5}
\mu \geq \frac{\Upsilon_{upper}(\Upsilon_{upper}^{2t+2} - \Upsilon_{upper}^2)}{\Upsilon_{upper}^2-1}.
\end{equation}
\noindent\underline{\bf Case 2}: $m_2<N-t$. In this case, using the construction of $\bw$ in \eqref{pequ:cons:w} will result in a bound on~$\mu$ that grows exponentially in $N$. Instead, we make use of controllability in Assumption \ref{ass:2}.~In~particular, we still let $(\bp_{0:m_1-1}, \bq_{0:m_1-1}) = (\0, \0)$ and $\bq_{m_2} = \0$. Then $\bp_{m_2+1} = A_{m_2}\bp_{m_2}$. Let $l = m_2+1$ and let $(\bp_{k}, \bq_k) = (\0, \0)$, $\forall k\geq l+t_l$. We now show how to evolve from $\bp_{l}$ to $\bp_{l + t_{l}}$. Applying \eqref{pequ:2a} recursively, we have
\begin{equation}\label{pequ:6}
\bp_{l+j} = \rbr{\prod_{h=1}^{j}A_{l+h-1}}\bp_{l} + \Xi_{l, j}\begin{pmatrix}
\bq_{l+j-1}\\
\vdots\\
\bq_{l}
\end{pmatrix}, \quad\quad \forall j\geq 1.
\end{equation}
Letting $j = t_l$ in \eqref{pequ:6}, we see that if we specify the control sequence $\bq_{l+t_l-1:l}$ by
\begin{equation}\label{pequ:7}
\bq_{l+t_l-1:l} = -\Xi_{l, t_l}^T(\Xi_{l, t_l}\Xi_{l, t_l}^T)^{-1}(\prod_{h=1}^{t_l}A_{l+h-1})\bp_l,
\end{equation}
and generate $\bp_{l+j}$ as \eqref{pequ:2a}, then we have $\bp_{l+t_l} = \0$. Thus, $\bw = (\bp, \bq)\in \{\bw: G\bw = \0, \bw\neq \0\}$.~Moreover, by Assumptions \ref{ass:2} and \ref{ass:3}, we obtain from \eqref{pequ:7} that
\begin{equation}\label{pequ:8}
\|\bq_{l+t_l-1:l}\| \leq \|\Xi_{l, t_l}^T(\Xi_{l, t_l}\Xi_{l, t_l}^T)^{-1}\|\cdot \Upsilon_{upper}^{t_l}\|\bp_l\| 
\leq \frac{\Upsilon_{upper}^{t_l+1}}{\sqrt{\gamma_C}}\|\bp_{m_2}\| \leq \frac{\Upsilon_{upper}^{t+1}}{\sqrt{\gamma_C}}\|\bp_{m_2}\|.
\end{equation}
The second inequality uses the fact that $\|A^T(AA^T)^{-1}\| = \|\Sigma^{-1}\|$ for any full row rank matrix $A$, and $U\Sigma V^T$ is its singular value decomposition. The last inequality is due to $t_l\leq t$ by Assumption \ref{ass:2}. Furthermore, for any $1\leq j\leq t_l-1$,
\begin{align}\label{pequ:9}
\|\bp_{l+j}\| \stackrel{\eqref{pequ:6}}{\leq} &  \Upsilon_{upper}^j\|\bp_l\| + \|\Xi_{l, j}\|\|\bq_{l+j-1:l}\| 
\leq  \Upsilon_{upper}^j\|\bp_l\| + \rbr{\sum_{h=1}^{j}\Upsilon_{upper}^h}\|\bq_{l+t_l-1:l}\| \nonumber\\
\stackrel{\eqref{pequ:8}}{\leq} & \Upsilon_{upper}^{j+1}\|\bp_{m_2}\| + \frac{\Upsilon_{upper}^{j+1}- \Upsilon_{upper}}{\Upsilon_{upper}-1}\cdot \frac{\Upsilon_{upper}^{t+1}}{\sqrt{\gamma_C}}\|\bp_{m_2}\|,
\end{align}
where the second inequality is due to $\|\Xi_{l, j}\|\leq \sum_{h=1}^j\Upsilon_{upper}^h$ by the definition of $\Xi_{l, j}$ in Assumption \ref{ass:2}. Without loss of generality, we suppose $\Upsilon_{upper}/2 \geq 1\geq \gamma_C$, then 
\begin{equation}\label{pequ:U}
\Upsilon_{upper} \leq \frac{\Upsilon_{upper}^{t+2}}{(\Upsilon_{upper}-1)\sqrt{\gamma_C}} \leq \frac{2\Upsilon_{upper}^{t+1}}{\sqrt{\gamma_C}}.
\end{equation}
Thus, \eqref{pequ:9} can be further simplified~as
\begin{multline*}
\|\bp_{l+j}\| \stackrel{\eqref{pequ:9}}{\leq} \Upsilon_{upper}^{j+1}\|\bp_{m_2}\| + \frac{\Upsilon_{upper}^{j+1}}{\Upsilon_{upper}-1}\cdot \frac{\Upsilon_{upper}^{t+1}}{\sqrt{\gamma_C}}\|\bp_{m_2}\| \\
\stackrel{\eqref{pequ:U}}{\leq} \frac{2\Upsilon_{upper}^{j+1}}{\Upsilon_{upper}-1}\cdot \frac{\Upsilon_{upper}^{t+1}}{\sqrt{\gamma_C}}\|\bp_{m_2}\| \stackrel{\eqref{pequ:U}}{\leq} 4\Upsilon_{upper}^j\cdot \frac{\Upsilon_{upper}^{t+1}}{\sqrt{\gamma_C}}\|\bp_{m_2}\|.
\end{multline*}
The above inequality also holds for $j=0$. Thus,
\begin{multline}\label{pequ:10}
\|\bp_{l:l+t_l-1}\|^2 = \sum_{j=0}^{t_l-1}\|\bp_{l+j}\|^2 \leq 16\sum_{j=0}^{t_l-1}\Upsilon_{upper}^{2j}\cdot  \frac{\Upsilon_{upper}^{2t+2}}{\gamma_C}\|\bp_{m_2}\|^2 \\
= \frac{16(\Upsilon_{upper}^{2t}-1)\Upsilon_{upper}^{2t+2}}{\gamma_C(\Upsilon_{upper}^2-1)}\|\bp_{m_2}\|^2 \leq \frac{31\Upsilon_{upper}^{4t}}{\gamma_C}\|\bp_{m_2}\|^2,
\end{multline}
where the last inequality uses $\Upsilon_{upper}^2/(\Upsilon_{upper}^2-1)\leq 31/16$ (as $\Upsilon_{upper}\geq 2$). Combining \eqref{pequ:10} with \eqref{pequ:8} and noting that $2t+2\leq 4t$, we get
\begin{equation*}
\|(\bp_{l:l+t_l-1}; \bq_{l:l+t_l-1})\|^2 \leq \frac{32\Upsilon_{upper}^{4t}}{\gamma_C} \|\bp_{m_2}\|^2.
\end{equation*}
Finally, by a similar derivation as \eqref{pequ:4}, we know $\tbw_i^T\hH^{(i)}\tbw_i \geq \gamma_{RH}\|\tbw_i\|^2$ provided $\mu \geq 32\Upsilon_{upper}^{4t+1}/\gamma_C$. This condition implies \eqref{pequ:5}, so we complete the proof.

\subsection{Proof of Corollary \ref{cor:1}}\label{pf:cor:1}

We note that all three conditions are independent of $\bd_i$, so the statement holds for any $\bd_i$. By Lemma~\ref{lem:1}, the reduced Hessian of $\LP_{\mu}^i(\bd_i)$ is	lower bounded by $\gamma_{RH} I$ for any $i\in[M-1]$. Thus, (i) holds. The controllability in Assumption \ref{ass:2} holds naturally for the subproblems with the same constants $(\gamma_{C}, t)$, as the dynamics of the subproblems are a~subset of the full problem. Thus, (ii) holds. For the upper boundedness condition, we note that the last square matrix of the objective of \eqref{pro:4a} is bounded by $\Upsilon_{upper}+\mu$, and other square matrices are~the same as the full problem \eqref{pro:3}. Thus, (iii) holds. This completes the proof.

\subsection{Proof of Theorem \ref{thm:2}}\label{pf:thm:2}

Our proof relies on the primal-dual sensitivity results of NLDPs \cite{Na2020Exponential, Na2022Convergence}. We omit the subproblem index $i$ in the proof. We define
\begin{align*}
\tg_k(\bp_k, \bq_k) = & \frac{1}{2}\begin{pmatrix}
\bp_k\\
\bq_k
\end{pmatrix}^T\begin{pmatrix}
\hQ_k & \hS_k^T\\
\hS_k & \hR_k
\end{pmatrix}\begin{pmatrix}
\bp_k\\
\bq_k
\end{pmatrix} + \begin{pmatrix}
\nabla_{\bx_k}\mL\\
\nabla_{\bu_k}\mL
\end{pmatrix}^T\begin{pmatrix}
\bp_k\\
\bq_k
\end{pmatrix},\quad \forall k\in[m_1, m_2),\\
\tg_{m_2}(\bp_{m_2}; \bd_{2:4}) = &\frac{1}{2}\begin{pmatrix}
\bp_{m_2}\\
\bd_3
\end{pmatrix}^T\begin{pmatrix}
\hQ_{m_2} & \hS_{m_2}^T\\
\hS_{m_2} &  \hR_{m_2}
\end{pmatrix}\begin{pmatrix}
\bp_{m_2}\\
\bd_3
\end{pmatrix} + \nabla_{\bx_{m_2}}^T\mL\bp_{m_2}  - \bd_4^TA_{m_2}\bp_{m_2} + \frac{\mu}{2}\|\bp_{m_2} - \bd_2\|^2,\\
\tf_k(\bp_k, \bq_k) = & A_k\bp_k + B_k\bq_k - \nabla_{\blambda_{k+1}}\mL, \quad k\in[m_1, m_2).
\end{align*}
Then, \eqref{pro:4} is rewritten as $\min \sum_{k=m_1}^{m_2-1} g_k(\bp_k, \bq_k) + \tg_{m_2}(\bp_{m_2}; \bd_{2:4})$, s.t. $\bp_{m_1} = \bd_{1}$, $\bp_{k+1} = \tf_k(\bp_k, \bq_k)$, $\forall k \in[m_1, m_2)$. By Lemma \ref{lem:1}, this problem has a unique solution $(\tbw^\star(\bd), \tbzeta^\star(\bd))$ for any $\bd$. Let us define a parameterized perturbation path from $\bd$ to $\bd'$:
\begin{align*}
\bd^{(1)}(\omega) = & (\bd_1; \bd_{2:4}) + \omega\rbr{\frac{\bd_1'-\bd_1}{\|\bd_1'-\bd_1\|}; \0}, \quad\quad \forall 0\leq \omega\leq \|\bd_1'-\bd_1\|,\\
\bd^{(2)}(\omega) = & (\bd'_1; \bd_{2:4}) + \omega\rbr{\0; \frac{\bd_{2:4}' - \bd_{2:4}}{\|\bd_{2:4}' - \bd_{2:4}\|}}, \quad \forall 0\leq \omega \leq \|\bd_{2:4}'-\bd_{2:4}\|,
\end{align*}
where we essentially first perturb $\bd_1$ and then perturb $\bd_{2:4}$. At $\bd^{(j)}(\omega)$ for $j=1,2$, we define the directional	derivatives of the solution trajectories as (similar for $D\tq_k, D\tzeta_k$)
\begin{equation*}
D\tp_k(\bd^{(j)}(\omega)) = \lim\limits_{\epsilon\searrow 0}\frac{\bp_k^\star(\bd^{(j)}(\omega+\epsilon)) - \bp_k^\star(\bd^{(j)}(\omega))}{\epsilon}, \quad\quad \forall k\in[m_1, m_2].
\end{equation*}	
The existence of the directional derivatives is ensured by Lemma \ref{lem:1} and \cite[Theorem 2.3]{Na2020Exponential}. By Corollary \ref{cor:1} and the fact that (by Assumption \ref{ass:3}) 
\begin{equation*}
\|\nabla_{\bp_{m_2}\bd_{2:4}}\tg_{m_2}\| = \|(\mu I\;\; S_{m_2}^T\;\; -A_{m_2}^T)\|\leq \mu + 2\Upsilon_{upper},
\end{equation*}
we know that \cite[Assumption 4.2]{Na2020Exponential} is satisfied. Thus, by \cite[Theorem 5.7]{Na2020Exponential},
\begin{equation}\label{pequ:11}
\|D\tp_k(\bd^{(1)}(\omega))\| \leq C'\rho^{k-m_1}, \quad \|D\tp_k(\bd^{(2)}(\omega))\|\leq C'\rho^{m_2-k}, \quad \forall k\in[m_1, m_2],
\end{equation}	
for constants $C'>0$ and $\rho\in(0, 1)$ depending on $\mu, \Upsilon_{upper}, \gamma_{RH}, \gamma_C$ only. By \cite[Theorem 5.7]{Na2020Exponential} and \cite[Theorem 5]{Na2022Convergence}, we know \eqref{pequ:11} holds for $D\tq_k(\bd^{(j)}(\omega))$ and $D\tzeta_{k}(\bd^{(j)}(\omega))$ as well. Furthermore,
\begin{align*}
\|\bp_{k}^\star(\bd) - \bp_{k}^\star(\bd')\| & = \|\bp_{k}^\star(\bd^{(1)}(0)) - \bp_{k}^\star(\bd^{(2)}(\|\bd'_{2:4} - \bd_{2:4}\|))\|\\
& \leq  \|\bp_{k}^\star(\bd^{(1)}(0)) - \bp_{k}^\star(\bd^{(1)}(\|\bd'_1 - \bd_1\|))\| + \|\bp_{k}^\star(\bd^{(2)}(0)) - \bp_{k}^\star(\bd^{(2)}(\|\bd'_{2:4} - \bd_{2:4}\|))\|\\
& = \nbr{\int_{0}^{\|\bd'_1 - \bd_1\|}D\tp_{k}(\bd^{(1)}(\omega))d\omega  } + \nbr{\int_{0}^{\|\bd'_{2:4} - \bd_{2:4}\|}D\tp_{k}(\bd^{(2)}(\omega))d\omega  }\\
& \stackrel{\mathclap{\eqref{pequ:11}}}{\leq}C'\rbr{\rho^{k-m_1}\|\bd'_1 - \bd_1\| + \rho^{m_2-k}\|\bd'_{2:4} - \bd_{2:4}\|}.
\end{align*}
The above inequality also holds for $\|\bq_k^\star(\bd) - \bq_k^\star(\bd')\|$, $\|\bzeta_{k}^\star(\bd) - \bzeta_{k}^\star(\bd')\|$. Since $\|\bw_k^\star(\bd) - \bw_k^\star(\bd')\| = \sqrt{\|\bp_{k}^\star(\bd) - \bp_{k}^\star(\bd')\|^2 + \|\bq_k^\star(\bd) - \bq_k^\star(\bd')\|^2}$, the proof is complete by redefining $C'\leftarrow \sqrt{2}C'$.

\subsection{Proof of Theorem \ref{thm:3}}\label{pf:thm:3}

Since $(\tDelta\bz, \tDelta\blambda) = \mC(\{(\tbwi(\bd_i), \tbzetai(\bd_i))\}_i)$ with $\bd_i = (\0;\0;\0;\0)$ ($\bd_{M-1} = \0$), by Definition \ref{def:1} we know that 
\begin{equation}\label{pequ:C1}
\begin{aligned}
&\tDelta\bz_k = \tbw_{i, k}^\star(\bd_i), \quad \text{if } k \in[n_i, n_{i+1}) \text{ for some } i \in [M-1], \quad \quad \tDelta\bz_N = \tbw_{M-1, N}^\star(\bd_{M-1}),\\
&\tDelta\blambda_k = \tbzeta_{i, k}^\star(\bd_i), \quad\; \text{if } k \in[n_i, n_{i+1}) \text{ for some } i \in [M-1], \quad \quad \tDelta\blambda_N = \tbzeta_{M-1, N}^\star(\bd_{M-1}).
\end{aligned}
\end{equation}
Applying Theorem \ref{thm:1}(i) on Problem \eqref{pro:3}, we know that if 
\begin{equation}\label{pequ:dstar}
\bd_i^\star = (\Delta\bx_{m_1}; \Delta\bx_{m_2};\Delta\bu_{m_2}; \Delta\blambda_{m_2+1}),
\end{equation}
then 
\begin{equation}\label{pequ:C2}
\begin{aligned}
&\Delta\bz_k = \tbw_{i, k}^\star(\bd_i^\star), \quad \text{if } k \in[n_i, n_{i+1}) \text{ for some } i \in [M-1], \quad \quad \Delta\bz_N = \tbw_{M-1, N}^\star(\bd_{M-1}^\star),\\
&\Delta\blambda_k = \tbzeta_{i, k}^\star(\bd_i^\star), \quad\; \text{if } k \in[n_i, n_{i+1}) \text{ for some } i \in [M-1], \quad \quad \Delta\blambda_N = \tbzeta_{M-1, N}^\star(\bd_{M-1}^\star).
\end{aligned}
\end{equation}
Moreover, we claim $\bd_{0,1}^\star = \Delta\bx_0 = \0$ for any	iteration $\tau$. In fact, from the input to Algorithm~\ref{alg:FOTD},~$\bx_0^0 = \bbx_0$. By \eqref{pro:3c}, $\Delta\bx_0^0 = -(\bx_0^0 - \bbx_0)= \0$. Furthermore, if $\bx_0^{\tau-1} = \bbx_0$ for $\tau\geq 1$, we use the fact that $\tDelta\bx_0^{\tau-1} = \bd_{0,1}^{\tau-1} = \0$ (the first equality is due to \eqref{pro:4c}; the second equality is due to the specification of the boundary variable), and obtain $\bx_0^\tau = \bx_0^{\tau-1}+\alpha_{\tau-1}\tDelta\bx^{\tau-1} = \bx_0^{\tau-1} = \bbx_0$. Thus, we have $\Delta\bx_0^{\tau} \stackrel{\eqref{pro:3c}}{=} - (\bx_0^\tau - \bbx_0) = \0$. Comparing \eqref{pequ:C1} and \eqref{pequ:C2} for each stage and applying Theorem \ref{thm:2}, we have
\begin{subequations}\label{pequ:12}
\begin{align}
\max\{ & \|\tDelta\bz_k - \Delta\bz_k\|^2,\;  \|\tDelta\blambda_k  - \Delta\blambda_k\|^2\} \nonumber\\[3pt]
& \leq  2(C')^2(\rho^{2(k-m_1)}\|\bd_{i, 1}^\star\|^2 +  \rho^{2(m_2-k)}\|\bd_{i, 2:4}^\star\|^2), \hskip 1cm \forall k\in[n_i, n_{i+1}), \; i\in[M-2], \label{pequ:12a}\\[3pt]
\max\{& \|\tDelta\bz_k  - \Delta\bz_k\|^2,\; \|\tDelta\blambda_k  - \Delta\blambda_k\|^2\} \leq (C')^2\rho^{2(k-m_1)}\|\bd_{i, 1}^\star\|^2,\hskip 1cm \forall k\in[n_{M-1}, n_M], \label{pequ:12b}
\end{align}
\end{subequations}
where \eqref{pequ:12a} holds for $i=0$ since $\bd_{0, 1}^\star = \0$ as we just claimed. Thus, we get
\begin{align*}
& \|\tDelta\bz - \Delta\bz\|^2 =  \sum_{i=0}^{M-2}\sum_{k = n_i}^{n_{i+1}-1}\|\tDelta\bz_k  - \Delta\bz_k\|^2 + \sum_{k=n_{M-1}}^{n_M}\|\tDelta\bz_k - \Delta\bz_k\|^2\\
& \stackrel{\mathclap{\eqref{pequ:12}}}{\leq}\;  2(C')^2\sum_{i=0}^{M-2}\rbr{\sum_{j=0}^{n_{i+1}-n_i-1}\rho^{2(b+j)}\|\bd_{i, 1}^\star\|^2 +  \sum_{j=1}^{n_{i+1}-n_i}\rho^{2(b+j)}\|\bd_{i,2:4}^\star\|^2} + (C')^2\sum_{j=0}^{n_M - n_{M-1}} \rho^{2(b+j)}\|\bd_{M-1}^\star\|^2\\
&\leq 2(C')^2\rho^{2b}\sum_{i=0}^{M-2}\sum_{j=0}^{\infty}\rho^{2j}\|\bd_i^\star\|^2 + (C')^2\rho^{2b}\sum_{j=0}^{\infty}\rho^{2j}\|\bd_{M-1}^\star\|^2\\
& \leq  \frac{2(C')^2\rho^{2b}}{1-\rho^2}\sum_{i=0}^{M-1}\|\bd_i^\star\|^2 \stackrel{\eqref{pequ:dstar}}{\leq} \frac{2(C')^2\rho^{2b}}{1-\rho^2}\|(\Delta\bz, \Delta\blambda)\|^2.
\end{align*}
The above derivation also holds for $\|\tDelta\blambda - \Delta\blambda\|^2$. Thus,
\begin{equation*}
\|(\tDelta\bz - \Delta\bz, \tDelta\blambda - \Delta\blambda)\|^2 \leq \frac{4(C')^2\rho^{2b}}{1-\rho^2}\|(\Delta\bz, \Delta\blambda)\|^2.
\end{equation*}
Letting $C=2C'/\sqrt{1-\rho^2}$, we complete the proof.

\section{Proofs of results in \cref{sec:5}}

\subsection{Proof of Lemma \ref{lem:2}}\label{pf:lem:2}

Recall from \eqref{equ:def:H} that $H(\bz, \blambda) = \diag(H_0,\ldots, H_N)$. Thus,
\begin{align*}
\|H(\bz, \blambda)\| \leq &  \max_{k\in[N]}\|H_k(\bz_k,\blambda_{k+1})\| \leq \Upsilon_{upper}
\end{align*}
for any $(\bz, \blambda)\in\mZ\times \Lambda$, where the last inequality is due to Assumption \ref{ass:4}. Furthermore, using the expression of $GG^T$ in \eqref{equ:GG}, we immediately have
\begin{equation*}
\|G(\bz)\| = \sqrt{\|GG^T\|} \leq \sqrt{1+\|A_k\|^2+\|B_k^2\| + 2\|A_k\|}\stackrel{\eqref{equ:upper:third:new}}{\leq} 1+2\Upsilon_{upper}.
\end{equation*}
Thus, we can let $\Upsilon_{HG} = 1+2\Upsilon_{upper}$ and the first part of the statement holds. Moreover, if $\|\hH_k\|\leq \Upsilon_{upper}$ by Assumption \ref{ass:3}, then $\|\hH\| = \|\diag(\hH_0, \ldots, \hH_N)\| \leq \Upsilon_{upper}$. Let $Z$ be defined in Assumption \ref{ass:1}. Then, noting that $G^T(GG^T)^{-1}G + ZZ^T = I$, we can verify that
\begin{equation*}
\mB \coloneqq \begin{pmatrix}
\hH & G^T\\
G & \0
\end{pmatrix}^{-1}
=
\begin{pmatrix}
\mB_1 & \mB_2^T \\
\mB_2 & \mB_3
\end{pmatrix},
\end{equation*}
where 
\begin{align*}
\mB_1 =& Z(Z^T\hH Z)^{-1}Z^T,\quad\quad \mB_2 = (GG^T)^{-1}G(I - \hH Z(Z^T\hH Z)^{-1}Z^T),\\
\mB_3 = & (GG^T)^{-1}G(\hH Z(Z^T\hH Z)^{-1}Z^T\hH - \hH)G^T(GG^T)^{-1}.
\end{align*}
Then, by Assumptions \ref{ass:1}-\ref{ass:3} and Lemma \ref{lem:0}, we have
\begin{align*}
\|\mB_1\| \leq & \frac{1}{\gamma_{RH}},\quad 
\|\mB_2\| \leq  \|(GG^T)^{-1}G\|(1 + \frac{\Upsilon_{upper}}{\gamma_{RH}}) \leq \frac{1}{\sqrt{\gamma_G}}(1 + \frac{\Upsilon_{upper}}{\gamma_{RH}}), \\
\|\mB_3\| \leq & \frac{1}{\gamma_G}(\Upsilon_{upper} + \frac{\Upsilon_{upper}^2}{\gamma_{RH}}).
\end{align*}
Noting that $\|\mB\| \leq \|\mB_1\| + 2\|\mB_2\| + \|\mB_3\|$, we complete the proof.

\subsection{Proof of Theorem \ref{thm:4}}\label{pf:thm:4}

We suppress the iteration index $\tau$. We know
\begin{equation}\label{pequ:13}
\begin{pmatrix}
\nabla_{\bz}\mL_{\eta}\\
\nabla_{\blambda}\mL_{\eta}
\end{pmatrix}^T\begin{pmatrix}
\tDelta\bz\\
\tDelta\blambda
\end{pmatrix} = \begin{pmatrix}
\nabla_{\bz}\mL_{\eta}\\
\nabla_{\blambda}\mL_{\eta}
\end{pmatrix}^T\begin{pmatrix}
\Delta\bz\\
\Delta\blambda
\end{pmatrix} + \begin{pmatrix}
\nabla_{\bz}\mL_{\eta}\\
\nabla_{\blambda}\mL_{\eta}
\end{pmatrix}^T\begin{pmatrix}
\tDelta\bz - \Delta\bz\\
\tDelta\blambda - \Delta\blambda
\end{pmatrix}.
\end{equation}
For the first term in \eqref{pequ:13},
\begin{align}\label{nnpequ:1}
\I_1 & \coloneqq \begin{pmatrix}
\nabla_{\bz}\mL_{\eta}\\
\nabla_{\blambda}\mL_{\eta}
\end{pmatrix}^T\begin{pmatrix}
\Delta\bz\\
\Delta\blambda
\end{pmatrix} \nonumber\\
& \stackrel{\mathclap{\eqref{equ:der:aug:L}}}{=} \; \begin{pmatrix}
\Delta\bz\\
\Delta\blambda
\end{pmatrix}^T\begin{pmatrix}
I + \eta_2H & \eta_1G^T\\
\eta_2G & I
\end{pmatrix}\begin{pmatrix}
\nabla_{\bz}\mL\\
\nabla_{\blambda}\mL
\end{pmatrix} \nonumber\\
& \stackrel{\mathclap{\eqref{equ:Newton}}}{=} - \begin{pmatrix}
\Delta\bz\\
\Delta\blambda
\end{pmatrix}^T\begin{pmatrix}
I + \eta_2H & \eta_1G^T\\
\eta_2G & I
\end{pmatrix}\begin{pmatrix}
\hH & G^T\\
G & \0
\end{pmatrix}\begin{pmatrix}
\Delta\bz\\
\Delta\blambda
\end{pmatrix} \nonumber\\
& = - \begin{pmatrix}
\Delta\bz\\
\Delta\blambda
\end{pmatrix}^T\begin{pmatrix}
(I+\eta_2H)\hH + \eta_1G^TG & (I+\eta_2H)G^T\\
G(I + \eta_2\hH) & \eta_2GG^T
\end{pmatrix}\begin{pmatrix}
\Delta\bz\\
\Delta\blambda
\end{pmatrix} \nonumber\\
&\stackrel{\mathclap{\eqref{equ:Newton}}}{=} - (\Delta\bz)^T\cbr{(I+\eta_2H)\hH + \frac{\eta_1}{2}G^TG}\Delta\bz -\frac{\eta_1}{2}\|\nabla_{\blambda}\mL\|^2 - \eta_2\|\hH\Delta\bz + \nabla_{\bz}\mL\|^2 \nonumber\\
&\quad \quad - (\Delta\blambda)^TG\{2I + \eta_2(\hH + H)\}\Delta\bz \nonumber\\
& = -\frac{\eta_1}{2}\|\nabla_{\blambda}\mL\|^2 - \frac{\eta_2}{2}\|\nabla_{\bz}\mL\|^2 - (\Delta\bz)^T\cbr{(I+\eta_2H)\hH + \frac{\eta_1}{2}G^TG}\Delta\bz \nonumber\\
&\quad \quad - (\Delta\blambda)^TG\{2I + \eta_2(\hH + H)\}\Delta\bz + \rbr{\frac{\eta_2}{2}\|\nabla_{\bz}\mL\|^2 - \eta_2\|\hH\Delta\bz + \nabla_{\bz}\mL\|^2}.
\end{align}
For the last term in the above equation,
\begin{align}\label{nnpequ:2}
\frac{\eta_2}{2}\|\nabla_{\bz}\mL\|^2 & - \eta_2\|\hH\Delta\bz + \nabla_{\bz}\mL\|^2 \nonumber\\
& = -\eta_2\|\hH\Delta\bz\|^2 - 2\eta_2(\Delta\bz)^T\hH\nabla_{\bz}\mL - \frac{\eta_2}{2}\|\nabla_{\bz}\mL\|^2 \nonumber\\
& \stackrel{\mathclap{\eqref{equ:Newton}}}{=} -\eta_2\|\hH\Delta\bz\|^2 + 2\eta_2(\Delta\bz)^T\hH(\hH\Delta\bz + G^T\Delta\blambda) - \frac{\eta_2}{2}\|\hH\Delta\bz + G^T\Delta\blambda\|^2 \nonumber\\
& = \eta_2\|\hH\Delta\bz\|^2 + \eta_2(\Delta\bz)^T\hH G^T\Delta\blambda - \frac{\eta_2}{2}\|\hH\Delta\bz\|^2 - \frac{\eta_2}{2}\|G^T\Delta\blambda\|^2 \nonumber\\
& \leq \eta_2\|\hH\Delta\bz\|^2 + \eta_2(\Delta\bz)^T\hH G^T\Delta\blambda - \frac{\eta_2}{2}\|G^T\Delta\blambda\|^2.
\end{align}
Combining the above two displays and supposing $\eta_1\geq \eta_2$ at the moment,	
\begin{align}\label{pequ:14}
\I_1 & \stackrel{\mathclap{\eqref{nnpequ:1}}}{\leq}  - \frac{\eta_2}{2}\|\nabla\mL\|^2 - (\Delta\bz)^T\cbr{(I+\eta_2H)\hH + \frac{\eta_1}{2}G^TG}\Delta\bz - (\Delta\blambda)^TG\{2I + \eta_2(\hH + H)\}\Delta\bz \nonumber \\[3pt]
& \quad + \rbr{\frac{\eta_2}{2}\|\nabla_{\bz}\mL\|^2 - \eta_2\|\hH\Delta\bz + \nabla_{\bz}\mL\|^2} \nonumber\\[3pt]
& \stackrel{\mathclap{\eqref{nnpequ:2}}}{\leq} - \frac{\eta_2}{2}\|\nabla\mL\|^2 - (\Delta\bz)^T\cbr{(I+\eta_2H)\hH + \frac{\eta_1}{2}G^TG}\Delta\bz - (\Delta\blambda)^TG\{2I + \eta_2(\hH + H)\}\Delta\bz \nonumber \\[3pt]
& \quad + \eta_2\|\hH\Delta\bz\|^2 + \eta_2(\Delta\bz)^T\hH G^T\Delta\blambda - \frac{\eta_2}{2}\|G^T\Delta\blambda\|^2 \nonumber \\[3pt]
&=  - \frac{\eta_2}{2}\|\nabla\mL\|^2 - (\Delta\bz)^T\cbr{(I+\eta_2(H-\hH))\hH + \frac{\eta_1}{2}G^TG}\Delta\bz - (\Delta\blambda)^TG(2I + \eta_2 H)\Delta\bz - \frac{\eta_2}{2}\|G^T\Delta\blambda\|^2 \nonumber\\[3pt]
& \leq  - \frac{\eta_2}{2}\|\nabla\mL\|^2 + 2\eta_2\Upsilon^2\|\Delta\bz\|^2 + 2\|\Delta\blambda\|\|G\Delta\bz\| + \eta_2\Upsilon\|G^T\Delta\blambda\|\|\Delta\bz\|   - \frac{\eta_2}{2}\|G^T\Delta\blambda\|^2\nonumber\\[3pt]
& \quad - (\Delta\bz)^T\cbr{\hH + \frac{\eta_1}{2}G^TG}\Delta\bz \nonumber\\[3pt]
& \leq - \frac{\eta_2}{2}\|\nabla\mL\|^2 + 3\eta_2\Upsilon^2\|\Delta\bz\|^2 + 2\|\Delta\blambda\|\|G\Delta\bz\| - \frac{\eta_2}{4}\|G^T\Delta\blambda\|^2- (\Delta\bz)^T\cbr{\hH + \frac{\eta_1}{2}G^TG}\Delta\bz \nonumber\\[3pt]
&\stackrel{\mathclap{\text{Lemma } \ref{lem:0}}}{\leq}\;\;\; - \frac{\eta_2}{2}\|\nabla\mL\|^2 + 3\eta_2\Upsilon^2\|\Delta\bz\|^2 + 2\|\Delta\blambda\|\|G\Delta\bz\| - \frac{\eta_2\gamma_G}{4}\|\Delta\blambda\|^2 - (\Delta\bz)^T\cbr{\hH + \frac{\eta_1}{2}G^TG}\Delta\bz \nonumber\\[3pt]
& \leq - \frac{\eta_2}{2}\|\nabla\mL\|^2- \frac{\eta_2\gamma_{G}}{8}\|\Delta\blambda\|^2 + 3\eta_2\Upsilon^2\|\Delta\bz\|^2  - (\Delta\bz)^T\cbr{\hH + \rbr{\frac{\eta_1}{2} - \frac{8}{\eta_2\gamma_G}}G^TG}\Delta\bz,
\end{align}	
where the fourth inequality uses Assumption \ref{ass:3} and \eqref{equ:upper} so that $\max\{\|\hH\|, \|H\|\}\leq \Upsilon$; the fifth and seventh inequalities use Young's inequalities:
\begin{align*}
\eta_2\Upsilon\|G^T\Delta\blambda\|\|\Delta\bz\| \leq & \frac{\eta_2}{4}\|G^T\Delta\blambda\|^2 + \eta_2\Upsilon^2\|\Delta\bz\|^2,\\
2\|\Delta\blambda\|\|G\Delta\bz\| \leq & \frac{\eta_2\gamma_{G}}{8}\|\Delta\blambda\|^2 + \frac{8}{\eta_2\gamma_{G}}\|G\Delta\bz\|^2.
\end{align*}
To simplify the last two terms, we suppose $\eta_1\geq 16/(\eta_2\gamma_{G})$, and decompose $\Delta\bz = \Delta\bu + \Delta\bv$, where $\Delta\bu\in \text{span}(G^T)$ so that $\Delta\bu = G^T\bar{\Delta}\bu$ for some $\bar{\Delta}\bu$, and $\Delta\bv\in \text{kernel}(G)$ so that $G\Delta\bv = \0$. Then,~we have $\|\Delta\bz\|^2 = \|\Delta\bu\|^2 + \|\Delta\bv\|^2$ and
\begin{align}\label{pequ:16}
3\eta_2\Upsilon^2 & \|\Delta\bz\|^2  - (\Delta\bz)^T\cbr{\hH + \rbr{\frac{\eta_1}{2} - \frac{8}{\eta_2\gamma_G}}G^TG}\Delta\bz \nonumber\\
& = 3\eta_2\Upsilon^2\|\Delta\bz\|^2 - (\Delta\bv)^T\hH\Delta\bv - 2(\Delta\bv)^T\hH\Delta\bu - (\Delta\bu)^T\hH\Delta\bu - \rbr{\frac{\eta_1}{2} - \frac{8}{\eta_2\gamma_G}}\|GG^T\bar{\Delta}\bu\|^2\nonumber\\
& \leq (3\eta_2\Upsilon^2 - \gamma_{RH})\|\Delta\bz\|^2 + 2\Upsilon\|\Delta\bv\|\|\Delta\bu\| + (\gamma_{RH} + \Upsilon)\|\Delta\bu\|^2 - \rbr{\frac{\eta_1}{2} - \frac{8}{\eta_2\gamma_G}}\gamma_{G}\|G^T\bar{\Delta}\bu\|^2 \nonumber\\
&\leq \rbr{3\eta_2\Upsilon^2 - \frac{\gamma_{RH}}{2}}\|\Delta\bz\|^2 + \rbr{  \gamma_{RH}  + \Upsilon + \frac{2\Upsilon^2}{\gamma_{RH}} + \frac{8}{\eta_2} - \frac{\eta_1\gamma_{G}}{2} }\|\Delta\bu\|^2,
\end{align}
where the second inequality uses Assumptions \ref{ass:1}, \ref{ass:3}, and Lemma \ref{lem:0}, and the third inequality~uses Young's inequality
\begin{equation*}
2\Upsilon\|\Delta\bv\|\|\Delta\bu\| \leq \frac{\gamma_{RH}}{2}\|\Delta\bv\|^2 + \frac{2\Upsilon^2}{\gamma_{RH}}\|\Delta\bu\|^2 \leq \frac{\gamma_{RH}}{2}\|\Delta\bz\|^2 + \frac{2\Upsilon^2}{\gamma_{RH}}\|\Delta\bu\|^2.
\end{equation*}
To make \eqref{pequ:16} negative, we let
\begin{equation}\label{pequ:eta:2}
3\eta_2\Upsilon^2 \leq \frac{\gamma_{RH}}{4} \Longleftrightarrow \eta_2\leq \frac{\gamma_{RH}}{12\Upsilon^2}.
\end{equation}
Furthermore, without loss of generality, we suppose $\Upsilon/2 \geq 1 \geq \max\{\gamma_{RH}, \gamma_G\}$, and have
\begin{equation*}
\gamma_{RH} + \Upsilon + \frac{2\Upsilon^2}{\gamma_{RH}} + \frac{8}{\eta_2} \leq\frac{3\Upsilon}{2} + \frac{2\Upsilon^2}{\gamma_{RH}} + \frac{8}{\eta_2} \leq \frac{3\Upsilon^2}{\gamma_{RH}}+ \frac{8}{\eta_2} \stackrel{\eqref{pequ:eta:2}}{\leq} \frac{1}{4\eta_2} + \frac{8}{\eta_2} \leq \frac{8.5}{\eta_2}.
\end{equation*}
Thus, we let 
\begin{equation}\label{pequ:eta:1}
\eta_1 \geq \frac{17}{\eta_2\gamma_{G}},
\end{equation}
which implies $\eta_1\geq\eta_2$ and $\eta_1\geq 16/(\eta_2\gamma_{G})$ as required in \eqref{pequ:14} and \eqref{pequ:16}. Thus,~under~\eqref{pequ:eta:1}~and~\eqref{pequ:eta:2}, the inequality \eqref{pequ:16} leads to
\begin{equation*}
3\eta_2\Upsilon^2\|\Delta\bz\|^2  - (\Delta\bz)^T\cbr{\hH + \rbr{\frac{\eta_1}{2} - \frac{8}{\eta_2\gamma_G}}G^TG}\Delta\bz \leq -\frac{\gamma_{RH}}{4}\|\Delta\bz\|^2  \stackrel{\eqref{pequ:eta:2}}{\leq} -\frac{\eta_2\gamma_{G}}{8}\|\Delta\bz\|^2.
\end{equation*}
Combining the above display with \eqref{pequ:14}, we obtain
\begin{equation}\label{pequ:15}
\I_1 =  \begin{pmatrix}
\nabla_{\bz}\mL_{\eta}\\
\nabla_{\blambda}\mL_{\eta}
\end{pmatrix}^T\begin{pmatrix}
\Delta\bz\\
\Delta\blambda
\end{pmatrix} \leq  -\frac{\eta_2}{2}\|\nabla\mL\|^2 - \frac{\eta_2\gamma_{G}}{8}\nbr{\begin{pmatrix}
\Delta\bz\\
\Delta\blambda
\end{pmatrix}}^2.
\end{equation}
For the second term in \eqref{pequ:13},
\begin{align*}
\I_2  &\coloneqq  \begin{pmatrix}
\nabla_{\bz}\mL_{\eta}\\
\nabla_{\blambda}\mL_{\eta}
\end{pmatrix}^T\begin{pmatrix}
\tDelta\bz - \Delta\bz\\
\tDelta\blambda - \Delta\blambda
\end{pmatrix} \stackrel{\substack{\eqref{equ:der:aug:L}\\ \eqref{equ:Newton}}}{=}- \begin{pmatrix}
\tDelta\bz - \Delta\bz\\
\tDelta\blambda - \Delta\blambda
\end{pmatrix}^T\begin{pmatrix}
(I+\eta_2H)\hH + \eta_1G^TG & (I+\eta_2H)G^T\\
G(I + \eta_2\hH) & \eta_2GG^T
\end{pmatrix}\begin{pmatrix}
\Delta\bz\\
\Delta\blambda
\end{pmatrix}\\
\stackrel{\eqref{equ:error:cond}}{\leq}& \delta\|(\Delta\bz, \Delta\blambda)\|^2\cbr{\Upsilon + \eta_2\Upsilon^2 + (\eta_1+\eta_2)\Upsilon^2 + 2(1+\eta_2\Upsilon)\Upsilon} = \delta\|(\Delta\bz, \Delta\blambda)\|^2\cbr{3\Upsilon + 4\eta_2\Upsilon^2 + \eta_1\Upsilon^2},
\end{align*}
where the third inequality also uses Assumption \ref{ass:3} and \eqref{equ:upper} so that $\max\{\|G\|, \|\hH\|,\|H\|\}\leq \Upsilon$. We use $\Upsilon/2 \geq 1 \geq \max\{\gamma_{RH}, \gamma_G\}$ and know that
\begin{equation}\label{npequ:1}
3\Upsilon + 4\eta_2\Upsilon^2 + \eta_1\Upsilon^2 \stackrel{\eqref{pequ:eta:2}}{\leq} 3\Upsilon + \frac{\gamma_{RH}}{3} + \eta_1\Upsilon^2\leq \frac{9.5\Upsilon}{3} + \eta_1\Upsilon^2 \leq 1.1\eta_1\Upsilon^2,
\end{equation}
where the last inequality uses	$\eta_1\stackrel{\eqref{pequ:eta:1}}{\geq} \frac{17}{\eta_2\gamma_G} > \frac{17\times 12\Upsilon^2}{\gamma_{RH}\gamma_G}$, so $9.5/3\leq 0.1\eta_1\Upsilon$. By the above two displays,
\begin{equation}\label{pequ:28}
\I_2  =\begin{pmatrix}
\nabla_{\bz}\mL_{\eta}\\
\nabla_{\blambda}\mL_{\eta}
\end{pmatrix}^T\begin{pmatrix}
\tDelta\bz - \Delta\bz\\
\tDelta\blambda - \Delta\blambda
\end{pmatrix} \leq 1.1\eta_1\delta\Upsilon^2\|(\Delta\bz, \Delta\blambda)\|^2.
\end{equation}
Combining \eqref{pequ:28} with \eqref{pequ:15} and \eqref{pequ:13},
\begin{equation}\label{npequ:2}
\begin{pmatrix}
\nabla_{\bz}\mL_{\eta}\\
\nabla_{\blambda}\mL_{\eta}
\end{pmatrix}^T\begin{pmatrix}
\tDelta\bz\\
\tDelta\blambda
\end{pmatrix} = \I_1+\I_2 \leq -\frac{\eta_2}{2}\nbr{\begin{pmatrix}
\nabla_{\bz}\mL\\
\nabla_{\blambda}\mL
\end{pmatrix}}^2 - \rbr{\frac{\eta_2\gamma_{G}}{8} - 1.1\eta_1\delta\Upsilon^2} \nbr{\begin{pmatrix}
\Delta\bz\\
\Delta\blambda
\end{pmatrix}}^2\leq - \frac{\eta_2}{2}\|\nabla\mL\|^2,
\end{equation}
where the last inequality holds if $\delta \leq \frac{\eta_2\gamma_{G}}{9\eta_1\Upsilon^2}$. This completes the proof.

\subsection{Proof of Theorem \ref{thm:5}}\label{pf:thm:5}

It suffices to prove the first part of the statement. The~\mbox{second} part holds immediately by Theorem \ref{thm:3} and specializes \eqref{equ:error:cond} with $\delta = C\rho^b$. By compactness of iterates and continuous differentiability of $\{g_k, f_k\}$ in Assumption \ref{ass:4}, we know $\sup_{\mZ\times \Lambda}\|\nabla^2\mL_\eta(\bz, \blambda)\| \leq \Upsilon_\eta$ for some $\Upsilon_\eta>0$ independent of $\tau$. We apply the Taylor expansion and obtain
\begin{align*}
\mL_{\eta}^{\tau+1}  &\leq \mL_{\eta}^\tau + \alpha_\tau\begin{pmatrix}
\nabla_{\bz}\mL_{\eta}^\tau\\
\nabla_{\blambda} \mL_{\eta}^\tau
\end{pmatrix}^T\begin{pmatrix}
\tDelta\bz^\tau\\
\tDelta\blambda^\tau
\end{pmatrix} + \frac{\Upsilon_\eta\alpha_\tau^2}{2}\nbr{\begin{pmatrix}
\tDelta\bz^\tau\\
\tDelta\blambda^\tau
\end{pmatrix}}^2\\
&\stackrel{\mathclap{\eqref{equ:error:cond}}}{\leq} \mL_{\eta}^\tau + \alpha_\tau\begin{pmatrix}
\nabla_{\bz}\mL_{\eta}^\tau\\
\nabla_{\blambda} \mL_{\eta}^\tau
\end{pmatrix}^T\begin{pmatrix}
\tDelta\bz^\tau\\
\tDelta\blambda^\tau
\end{pmatrix} + \frac{\Upsilon_\eta\alpha_\tau^2(1+\delta)^2}{2}\|(\Delta\bz^\tau,\Delta\blambda^\tau)\|^2\\
&\stackrel{\mathclap{\eqref{equ:Newton}, \text{ Lemma } \ref{lem:2}}}{\leq} \;\;\; \mL_{\eta}^\tau+ \alpha_\tau\begin{pmatrix}
\nabla_{\bz}\mL_{\eta}^\tau\\
\nabla_{\blambda} \mL_{\eta}^\tau
\end{pmatrix}^T\begin{pmatrix}
\tDelta\bz^\tau\\
\tDelta\blambda^\tau
\end{pmatrix} + 2\Upsilon_\eta\Upsilon^2 \alpha_\tau^2\|\nabla\mL^\tau\|^2\\
&\stackrel{\mathclap{\text{Theorem } \ref{thm:4}}}{\leq}\;\;\; \mL_{\eta}^\tau+ \alpha_\tau\begin{pmatrix}
\nabla_{\bz}\mL_{\eta}^\tau\\
\nabla_{\blambda} \mL_{\eta}^\tau
\end{pmatrix}^T\begin{pmatrix}
\tDelta\bz^\tau\\
\tDelta\blambda^\tau
\end{pmatrix} - \frac{4\Upsilon_\eta\Upsilon^2}{\eta_2} \alpha_\tau^2\begin{pmatrix}
\nabla_{\bz}\mL_{\eta}^\tau\\
\nabla_{\blambda} \mL_{\eta}^\tau
\end{pmatrix}^T\begin{pmatrix}
\tDelta\bz^\tau\\
\tDelta\blambda^\tau
\end{pmatrix}\\
& = \mL_{\eta}^\tau+ \alpha_\tau\begin{pmatrix}
\nabla_{\bz}\mL_{\eta}^\tau\\
\nabla_{\blambda} \mL_{\eta}^\tau
\end{pmatrix}^T\begin{pmatrix}
\tDelta\bz^\tau\\
\tDelta\blambda^\tau
\end{pmatrix}\cdot \cbr{1 - \frac{4\Upsilon_\eta\Upsilon^2}{\eta_2}\alpha_\tau},
\end{align*}
where the third inequality also uses $\delta\leq 1$. Thus, as long as
\begin{equation*}
1 - \frac{4\Upsilon_\eta\Upsilon^2}{\eta_2}\alpha_\tau \geq \beta \Longleftrightarrow \alpha_\tau \leq \frac{(1-\beta)\eta_2}{4\Upsilon_\eta\Upsilon^2},
\end{equation*}
the Armijo condition \eqref{equ:Armijo} is satisfied. Since the right hand side is independent of $\tau$, we know $\alpha_\tau \geq \bar{\alpha}$ for some $\bar{\alpha} >0$ when doing, for example, a backtracking line search. By \eqref{equ:Armijo} and Theorem~\ref{thm:4},
\begin{equation*}
\mL_{\eta}^{\tau+1} \leq \mL_{\eta}^\tau -\frac{\eta_2\alpha_\tau\beta}{2}\|\nabla\mL^\tau\|^2 \leq\mL_{\eta}^\tau - \frac{\eta_2\bar{\alpha}\beta}{2}\|\nabla\mL^\tau\|^2.
\end{equation*}
Summing over $\tau$, $\sum_{\tau=0}^{\infty} \|\nabla\mL^\tau\|^2 \leq \frac{2}{\eta_2\bar{\alpha}\beta}(\mL_{\eta}^0 - \min_{\mZ\times\Lambda}\mL_{\eta}(\bz, \blambda))<\infty$. This completes the proof.

\section{Proofs of results in \cref{sec:6}}

\subsection{Proof of Theorem \ref{thm:6}}\label{pf:thm:6}

By \eqref{equ:Armijo}, it suffices to show for large $\tau$ that
\begin{equation}\label{pequ:s}
\mL_{\eta}(\bz^\tau + \tDelta\bz^\tau, \blambda^\tau + \tDelta\blambda^\tau) \leq \mL_{\eta}^\tau + \beta\begin{pmatrix}
\nabla_{\bz}\mL_{\eta}^\tau\\
\nabla_{\blambda}\mL_{\eta}^\tau
\end{pmatrix}^T\begin{pmatrix}
\tDelta\bz^\tau\\
\tDelta\blambda^\tau
\end{pmatrix}.
\end{equation}
By the thrice continuous differentiability of $\{g_k, f_k\}$, we know $\nabla^2\mL_\eta$ is continuous. Thus, we have the following Taylor expansion
\begin{multline}\label{equ:taylor}
\mL_{\eta}(\bz^\tau + \tDelta\bz^\tau, \blambda^\tau + \tDelta\blambda^\tau) \\
\leq\mL_{\eta}^\tau + \begin{pmatrix}
\nabla_{\bz}\mL_{\eta}^\tau\\
\nabla_{\blambda}\mL_{\eta}^\tau
\end{pmatrix}^T\begin{pmatrix}
\tDelta\bz^\tau\\
\tDelta\blambda^\tau
\end{pmatrix} + \frac{1}{2}\begin{pmatrix}
\tDelta\bz^\tau\\
\tDelta\blambda^\tau
\end{pmatrix}^T\nabla^2\mL_{\eta}^\tau\begin{pmatrix}
\tDelta\bz^\tau\\
\tDelta\blambda^\tau
\end{pmatrix} + o(\|(\tDelta\bz^\tau, \tDelta\blambda^\tau)\|^2).
\end{multline}
By direct calculation, we have that
\begin{subequations}
\begin{align*}
\nabla_{\bz}^2\mL_{\eta} = & H + \eta_1G^TG + \eta_2H^2 + \eta_1\LD \nabla_{\bz}G, f\RD + \eta_2\LD \nabla_{\bz}H, \nabla_{\bz}\mL\RD, \\
\nabla_{\bz\blambda}\mL_{\eta} = & G^T + \eta_2HG^T + \eta_2\LD\nabla_{\blambda}H, \nabla_{\bz}\mL\RD, \quad\quad
\nabla_{\blambda}^2\mL_{\eta} =  \eta_2GG^T, 
\end{align*}
\end{subequations}
where for a function $a(x):\mR^{n}\rightarrow \mR^{m_1\times m_2}$ and a vector $b\in \mR^{m_1}$, we let $\LD \nabla a(x), b\RD \coloneqq \nabla^T(a^T(x)b) = \sum_{j=1}^{m_1}b_j\nabla^T a_j(x) \in \mR^{m_2\times n}$ for $a^T(x) = (a_1(x), \ldots, a_{m_1}(x))$. Thus, we define
\begin{equation*}
\H^\tau\coloneqq\begin{pmatrix}
H^\tau + \eta_1(G^{\tau})^TG^{\tau} + \eta_2 (H^{\tau})^2 & (I + \eta_2H^\tau)(G^\tau)^T\\
G^\tau(I + \eta_2H^\tau) & \eta_2G^\tau(G^\tau)^T
\end{pmatrix},
\end{equation*}
apply $\|\nabla\mL^\tau\| = \|(\nabla_{\bz}\mL^\tau, f^\tau)\|\rightarrow 0$, and get
\begin{equation}\label{equ:Hess:diff}
\nbr{\nabla^2\mL_{\eta}^\tau - \H^\tau} = o(1).
\end{equation}
Combining \eqref{pequ:s}, \eqref{equ:taylor} and \eqref{equ:Hess:diff}, it suffices to show that
\begin{equation}\label{pequ:18}
(1- \beta) \begin{pmatrix}
\nabla_{\bz}\mL_{\eta}^\tau\\
\nabla_{\blambda}\mL_{\eta}^\tau
\end{pmatrix}^T\begin{pmatrix}
\tDelta\bz^\tau\\
\tDelta\blambda^\tau
\end{pmatrix} + \frac{1}{2}\begin{pmatrix}
\tDelta\bz^\tau\\
\tDelta\blambda^\tau
\end{pmatrix}^T\H^\tau\begin{pmatrix}
\tDelta\bz^\tau\\
\tDelta\blambda^\tau
\end{pmatrix} + o(\|(\tDelta\bz^\tau, \tDelta\blambda^\tau)\|^2) \leq 0.
\end{equation}
We observe that
\begin{align*}
\begin{pmatrix}
\nabla_{\bz}\mL_{\eta}^\tau\\
\nabla_{\blambda}\mL_{\eta}^\tau
\end{pmatrix}^T\begin{pmatrix}
\tDelta\bz^\tau\\
\tDelta\blambda^\tau
\end{pmatrix} &+ \begin{pmatrix}
\tDelta\bz^\tau\\
\tDelta\blambda^\tau
\end{pmatrix}^T\H^\tau\begin{pmatrix}
\tDelta\bz^\tau\\
\tDelta\blambda^\tau
\end{pmatrix}\\
& \stackrel{\mathclap{\substack{\eqref{equ:der:aug:L}\\\eqref{equ:Newton}}}}{=} \; \begin{pmatrix}
\tDelta\bz^\tau\\
\tDelta\blambda^\tau
\end{pmatrix}^T\cbr{\H^\tau - \begin{pmatrix}
\hH^\tau + \eta_1(G^{\tau})^TG^{\tau} + \eta_2 H^\tau \hH^{\tau} & (I + \eta_2H^\tau)(G^\tau)^T\\
G^\tau(I + \eta_2\hH^\tau) & \eta_2G^\tau(G^\tau)^T
\end{pmatrix} }\begin{pmatrix}
\Delta\bz^\tau\\
\Delta\blambda^\tau
\end{pmatrix}\\
& \quad + \begin{pmatrix}
\tDelta\bz^\tau\\
\tDelta\blambda^\tau
\end{pmatrix}^T\H^\tau\begin{pmatrix}
\tDelta\bz^\tau - \Delta\bz^\tau\\
\tDelta\blambda^\tau - \Delta\blambda^\tau
\end{pmatrix}\eqqcolon \I_3 + \I_4.
\end{align*}
For term $\I_3$, we let $\Delta H^\tau = H^\tau - \hH^\tau$ and have
\begin{align}\label{pequ:19}
\I_3 = & \begin{pmatrix}
\tDelta\bz^\tau\\
\tDelta\blambda^\tau
\end{pmatrix}^T\begin{pmatrix}
(I + \eta_2H^\tau)\Delta H^\tau  & \0\\
\eta_2G^\tau \Delta H^\tau & \0
\end{pmatrix}\begin{pmatrix}
\Delta\bz^\tau\\
\Delta\blambda^\tau
\end{pmatrix} 
\stackrel{\eqref{equ:upper}}{\leq} \|(\tDelta\bz^\tau, \tDelta\blambda^\tau)\| \cdot(1+2\eta_2\Upsilon )\|\Delta H^\tau\Delta\bz^\tau\| \nonumber\\[3pt]
\leq & 2(1+2\eta_2\Upsilon)\|(\Delta\bz^\tau, \Delta\blambda^\tau)\| o(\|\Delta\bz^\tau\|) = o(\|(\Delta\bz^\tau, \Delta\blambda^\tau)\|^2),
\end{align}	
where the third inequality uses \eqref{equ:error:cond} and Assumption \ref{ass:5}.	For term $\I_4$, we apply Assumptions \ref{ass:3} and \eqref{equ:upper}, and have
\begin{multline*}
\I_4  \leq \|(\tDelta\bz^\tau, \tDelta\blambda^\tau)\|\cdot \|\H^\tau\|\cdot\|(\Delta\bz^\tau - \tDelta\bz^\tau, \Delta\blambda^\tau - \tDelta\blambda^\tau)\|\\[3pt]
\stackrel{\substack{\eqref{equ:error:cond}, \eqref{equ:upper}}}{\leq}  2\delta\|(\Delta\bz^\tau, \Delta\blambda^\tau)\|^2 \cbr{\Upsilon + \eta_2\Upsilon^2 + (\eta_1+\eta_2)\Upsilon^2 + 2(1+\eta_2\Upsilon)\Upsilon}\stackrel{\eqref{npequ:1}}{\leq} 2.2\delta\eta_1\Upsilon^2\|(\Delta\bz^\tau, \Delta\blambda^\tau)\|^2.
\end{multline*}	
Combining the above display with \eqref{pequ:18}, \eqref{pequ:19}, and using  $o(\|(\tDelta\bz^\tau, \tDelta\blambda^\tau)\|) \stackrel{\eqref{equ:error:cond}}{=} o(\|(\Delta\bz^\tau, \Delta\blambda^\tau)\|)$, it suffices to show that
\begin{equation*}
(\frac{1}{2} - \beta)\begin{pmatrix}
\nabla_{\bz}\mL_{\eta}^\tau\\
\nabla_{\blambda}\mL_{\eta}^\tau
\end{pmatrix}^T\begin{pmatrix}
\tDelta\bz^\tau\\
\tDelta\blambda^\tau
\end{pmatrix} + 1.1 \delta\eta_1\Upsilon^2\|(\Delta\bz^\tau, \Delta\blambda^\tau)\|^2 + o(\|(\Delta\bz^\tau, \Delta\blambda^\tau)\|^2)\leq 0.
\end{equation*}	
By \eqref{pequ:15}, \eqref{pequ:28}, \eqref{npequ:2} in the proof of Theorem \ref{thm:4}, the above inequality holds if	
\begin{equation*}
(\frac{1}{2} - \beta)\rbr{\frac{\eta_2\gamma_G}{8} - 1.1\delta\eta_1\Upsilon^2}\geq 1.1\delta\eta_1\Upsilon^2\Longleftarrow \delta \leq \frac{1/2-\beta}{3/2-\beta}\cdot\frac{\eta_2\gamma_{G}}{9\eta_1\Upsilon^2}.
\end{equation*}
This completes the proof.

\subsection{Proof of Theorem \ref{thm:7}}\label{pf:thm:7}

It suffices to show that the Newton system of $\P_{\mu}^i(\bd_i^\tau)$ at $\mD_i(\bz^\tau, \blambda^\tau)$ is the same as \eqref{pro:4} with $\hH_k^\tau = H_k^\tau$. We suppress the iteration index $\tau$. The Newton system of $\P_{\mu}^i(\bd_i^\tau)$ can be expressed~as
\begin{equation}\label{pequ:21}
\begin{pmatrix}
H^{(i)} & (G^{(i)})^T\\
G^{(i)} & \0
\end{pmatrix}\begin{pmatrix}
\Delta\bz^{(i)}\\
\Delta\blambda^{(i)}
\end{pmatrix} = -\begin{pmatrix}
\nabla_{\tbz_i}\mL^{(i)}\\
\nabla_{\tblambda_i}\mL^{(i)}
\end{pmatrix},
\end{equation}
where $\mL^{(i)}$ is the Lagrangian function of $\P_{\mu}^i(\bd_i^\tau)$, and $H^{(i)} = \nabla_{\tbz_i}^2\mL^{(i)}$ and $G^{(i)} = \nabla_{\tblambda_i\tbz_i}\mL^{(i)}$. By direct calculation and using the setup of $\bd_i$ of procedure (a), we have 
\begin{subequations}\label{pequ:22}
\begin{align}
H^{(i)} = &\diag(H_{m_1}, \ldots, H_{m_2-1}, Q_{m_2} + \mu I), \label{pequ:22:a}\\[3pt]
G^{(i)} = & \left(\begin{smallmatrix}
I\\
-A_{m_1} & -B_{m_1} & I\\
& & -A_{m_1+1} & -B_{m_1 + 1} & I \\
& & & & \ddots & \ddots & \ddots\\
& & & & & & -A_{m_2-1} & -B_{m_2-1} & I
\end{smallmatrix}\right), \label{pequ:22:b}
\end{align}
\end{subequations}
and
\begin{equation}\label{pequ:23}
\nabla_{\tbz_i}\mL^{(i)} =  (\nabla_{\bz_{m_1}}\mL; \ldots; \nabla_{\bz_{m_2-1}}\mL; \nabla_{\bx_{m_2}}\mL), \quad\quad 
\nabla_{\tblambda_i}\mL^{(i)} = (\0; \nabla_{\blambda_{m_1+1}}\mL;\ldots; \nabla_{\blambda_{m_2}}\mL).
\end{equation}
Plugging \eqref{pequ:22} and \eqref{pequ:23} into \eqref{pequ:21}, we observe that \eqref{pequ:21} is the same as $\LP_{\mu}^i(\bd_i)$ in \eqref{pro:4} with $\bd_i = (\0;\0;\0;\0)$ and $\hH_k = H_k$. Thus,
\begin{equation}\label{pequ:100}
(\Delta\bz^{(i)}, \Delta\blambda^{(i)}) = (\tbwi(\bd_i), \tbzetai(\bd_i)).
\end{equation}
Moreover, we denote by $(\bz_s^{\tau+1}, \blambda_s^{\tau+1})$ and $(\bz_f^{\tau+1}, \blambda_f^{\tau+1})$ the next iterate generated by the one-Newton-step Schwarz scheme and generated by FOTD, respectively. We have	
\begin{align*}
(\bz_s^{\tau+1}, \blambda_s^{\tau+1}) & = \mC\rbr{\cbr{ \mD_i(\bz^\tau, \blambda^\tau) + (\Delta\bz^{(i)}, \Delta\blambda^{(i)})}_i }
=\mC\rbr{\cbr{\mD_i(\bz^\tau, \blambda^\tau)}_i} + \mC\rbr{\cbr{(\Delta\bz^{(i)}, \Delta\blambda^{(i)})}_i }\\
& \stackrel{\mathclap{\eqref{pequ:100}}}{=} \;\; \mC\rbr{\cbr{\mD_i(\bz^\tau, \blambda^\tau)}_i} + \mC\rbr{\cbr{(\tbwi(\bd_i), \tbzetai(\bd_i)) }_i }
= \mC\rbr{\cbr{\mD_i(\bz^\tau, \blambda^\tau)}_i} + (\tDelta\bz^\tau, \tDelta\blambda^\tau)\\
&=  (\bz^\tau, \blambda^\tau) + (\tDelta\bz^\tau, \tDelta\blambda^\tau) = (\bz_f^{\tau+1}, \blambda_f^{\tau+1}),
\end{align*}
where the first, fourth and last equalities are due to the definitions of the Schwarz and the FOTD procedures; the second and fifth equalities are due to Definition \ref{def:1}. This completes the proof.

\subsection{Proof of Lemma \ref{lem:3}}\label{pf:lem:3}

Our proof relies on the KKT inverse structure in \cite[Lemma 2]{Na2023Superconvergence}. We only show \eqref{equ:result:b}, while \eqref{equ:result:c} holds by recalling that the last subproblem does not have~boundary variables at the terminal stage $N$. For subproblem $i\in[M-2]$, we let $(\tbzi, \tblambdai) = \mD_i(\tz, \tlambda)$. Let $C, \rho$ be the constants in Theorem \ref{thm:3}, and let $C_1 = 9C\Upsilon^2/\gamma_{G}$. Then, under the assumptions and the setup of $b$ in \eqref{equ:b:cond}, $\delta = \gamma_{G}C_1\rho^b/(9\Upsilon^2)$ satisfies \eqref{cond:2}. Suppose $\tau$ is large enough so that $\alpha_\tau = 1$. Borrowing the notation in \eqref{pequ:21}-\eqref{pequ:23}, we let $\hH^{(i)}$, $G^{(i)}$, $\nabla\mL^{(i)}$ be the Hessian, Jacobian, and KKT residual vector of Problem \eqref{pro:4} at the $\tau$-th iterate $(\tbz_i^{\tau}, \tblambda_i^{\tau})$. We consider the FOTD update:
\begin{multline}\label{pequ:24}
\begin{pmatrix}
\tbz_i^{\tau+1} - \tbzi\\
\tblambda_i^{\tau+1} - \tblambdai
\end{pmatrix} = \begin{pmatrix}
\tbz_i^{\tau} - \tbzi\\
\tblambda_i^{\tau} - \tblambdai
\end{pmatrix} + \begin{pmatrix}
\tbwi(\bd_i)\\
\tbzetai(\bd_i)
\end{pmatrix} \\
\stackrel{\substack{\eqref{pro:4} \\ \text{Theorem } \ref{thm:7}}}{=} \begin{pmatrix}
\hH^{(i)} & (G^{(i)})^T\\
G^{(i)} & \0
\end{pmatrix}^{-1}\bigg\{ \begin{pmatrix}
\hH^{(i)} & (G^{(i)})^T\\
G^{(i)} & \0
\end{pmatrix}\begin{pmatrix}
\tbz_i^{\tau} - \tbzi\\
\tblambda_i^{\tau} - \tblambdai
\end{pmatrix} - \begin{pmatrix}
\nabla_{\tbz_i}\mL^{(i)}\\
\nabla_{\tblambda_i}\mL^{(i)}
\end{pmatrix} \bigg\}.
\end{multline}	
We define the KKT residual evaluated at the truncated full horizon solution $(\tbzi, \tblambdai)$~as
\begin{equation}\label{pequ:200}
\begin{aligned}
\nabla_{\tbz_i}\mL^{(i), \star} = & (\nabla_{\bz_{m_1}}\mL^\star; \ldots; \nabla_{\bz_{m_2-1}}\mL^\star; \nabla_{\bx_{m_2}}\tilde{\mL}^\star),\\
\nabla_{\tblambda_i}\mL^{(i), \star} = & (\tx_{m_1} - \bx_{m_1}^\tau;\nabla_{\blambda_{m_1+1}}\mL^\star; \ldots; \nabla_{\blambda_{m_2}}\mL^\star),
\end{aligned}
\end{equation}
where $\nabla_{\bz_{m_1:m_2-1}}\mL^\star$ (similar for $\nabla_{\blambda_{m_1+1:m_2}}\mL^\star$) replaces the evaluation point $(\tbz_i^\tau, \tblambda_i^\tau)$~of the components $\nabla_{\bz_{m_1:m_2-1}}\mL$ of $\nabla_{\tbz_i}\mL^{(i)}$ (cf. \eqref{pequ:23}) with $(\tbzi, \tblambdai)$, and 
\begin{equation}\label{pequ:29}
\nabla_{\bx_{m_2}}\tilde{\mL}^\star \coloneqq \nabla_{\bx_{m_2}}g_{m_2}(\tx_{m_2}, \bu_{m_2}^\tau) + \tlambda_{m_2} - A_{m_2}^T(\tx_{m_2}, \bu_{m_2}^\tau)\blambda_{m_2+1}^\tau + \mu(\tx_{m_2} - \bx_{m_2}^\tau).
\end{equation}
Clearly, if we change the evaluation point from $(\tbzi, \tblambdai)$ back to $(\tbz_i^\tau, \tblambda_i^\tau)$ in \eqref{pequ:200} and \eqref{pequ:29}, then we get the vectors $\nabla_{\tbz_i}\mL^{(i)}$ and $\nabla_{\tblambda_i}\mL^{(i)}$ in \eqref{pequ:24}. Moreover, for $0\leq \phi\leq 1$, we let 
\begin{align*}
\tu_{m_2}(\phi) = & \tu_{m_2} + \phi(\bu_{m_2}^\tau - \tu_{m_2}), \quad\quad\quad \tlambda_{m_2+1}(\phi) = \tlambda_{m_2+1} + \phi(\blambda_{m_2+1}^\tau - \tlambda_{m_2+1}),\\[3pt]
\tbzi(\phi) = & \tbzi + \phi(\tbz_i^\tau - \tbzi), \hskip2.6cm
\tblambdai(\phi) = \tblambdai + \phi(\tblambda_i^\tau - \tblambdai),
\end{align*}
and let $H^{(i)}(\phi)$, $G^{(i)}(\phi)$ be $H^{(i)}$, $G^{(i)}$ (cf. \eqref{pequ:22}) evaluated at $(\tbzi(\phi), \tblambdai(\phi))$. Then, \eqref{pequ:24}~implies
\begin{align*}
\begin{pmatrix}
\tbz_i^{\tau+1} - \tbzi\\
\tblambda_i^{\tau+1} - \tblambdai
\end{pmatrix} = &-\begin{pmatrix}
\hH^{(i)} & (G^{(i)})^T\\
G^{(i)} & \0
\end{pmatrix}^{-1}\begin{pmatrix}
\nabla_{\tbz_i}\mL^{(i), \star}\\
\nabla_{\tblambda_i}\mL^{(i), \star}
\end{pmatrix} \\
&\quad  + \begin{pmatrix}
\hH^{(i)} & (G^{(i)})^T\\
G^{(i)} & \0
\end{pmatrix}^{-1} \cbr{\begin{pmatrix}
\hH^{(i)} & (G^{(i)})^T\\
G^{(i)} & \0
\end{pmatrix}\begin{pmatrix}
\tbz_i^{\tau} - \tbzi\\
\tblambda_i^{\tau} - \tblambdai
\end{pmatrix} - \begin{pmatrix}
\nabla_{\tbz_i}\mL^{(i)} - \nabla_{\tbz_i}\mL^{(i), \star}\\
\nabla_{\tblambda_i}\mL^{(i)} - \nabla_{\tblambda_i}\mL^{(i), \star}
\end{pmatrix} }\\
=& -\begin{pmatrix}
\hH^{(i)} & (G^{(i)})^T\\
G^{(i)} & \0
\end{pmatrix}^{-1}\begin{pmatrix}
\nabla_{\tbz_i}\mL^{(i), \star}\\
\nabla_{\tblambda_i}\mL^{(i), \star}
\end{pmatrix}\\
& \quad + \begin{pmatrix}
\hH^{(i)} & (G^{(i)})^T\\
G^{(i)} & \0
\end{pmatrix}^{-1}\int_{0}^1 \begin{pmatrix}
\hH^{(i)} - H^{(i)}(\phi) & (G^{(i)})^T - (G^{(i)}(\phi))^T\\
G^{(i)} - G^{(i)}(\phi) & \0
\end{pmatrix}\begin{pmatrix}
\tbz_i^{\tau} - \tbzi\\
\tblambda_i^{\tau} - \tblambdai
\end{pmatrix} d\phi\\
\eqqcolon& \K_i^{-1}\J_1 + \K_i^{-1}\J_2.
\end{align*}
To establish the stagewise error recursion, it suffices to establish the blockwise bound for the KKT inverse $\K_i^{-1}$ and the component-wise bound for vectors $\J_1$ and $\J_2$. The KKT inverse structure is given by \cite[Lemma 2]{Na2023Superconvergence} (the conditions are satisfied by Corollary \ref{cor:1}). We now deal with $\J_1$~and~$\J_2$.

\vskip4pt
\noindent\textbf{Term $\J_1$}. By Theorem \ref{thm:1}(i) (or checking the KKT conditions \eqref{equ:full:KKT} in the appendix), we know $\nabla_{\bz_{m_1:m_2-1}}\mL^\star = \0$ and $\nabla_{\blambda_{m_1+1:m_2}}\mL^\star = \0$. Thus, only the last component of $\nabla_{\tbz_i}\mL^{(i), \star}$ and the first component of $\nabla_{\tblambda_i}\mL^{(i), \star}$ are nonzero. The first component of $\nabla_{\tblambda_i}\mL^{(i), \star}$ is trivially bounded by $\|\tx_{m_1} - \bx_{m_1}^\tau\|$. For the last component of $\nabla_{\tbz_i}\mL^{(i), \star}$, we have
\begin{align*}
\nabla_{\bx_{m_2}}\tilde{\mL}^{\star} \;\;& \stackrel{\mathclap{\eqref{pequ:29}}}{=}\;\; \cbr{\nabla_{\bx_{m_2}}g_{m_2}(\tx_{m_2}, \bu_{m_2}^\tau) + \tlambda_{m_2} - A_{m_2}^T(\tx_{m_2}, \bu_{m_2}^\tau)\blambda_{m_2+1}^\tau } + \mu(\tx_{m_2} - \bx_{m_2}^\tau) \nonumber\\[3pt]
&\quad\quad -  \underbrace{\cbr{\nabla_{\bx_{m_2}}g_{m_2}(\tx_{m_2}, \tu_{m_2}) + \tlambda_{m_2} - A_{m_2}^T(\tx_{m_2}, \tu_{m_2})\tlambda_{m_2+1}}}_{\text{this is }\0 \text{ by KKT conditions (cf. \eqref{equ:full:KKT})}} \nonumber\\
& = \int_{0}^{1}\big(S_{m_2}^T(\tx_{m_2}, \tu_{m_2}(\phi), \tlambda_{m_2+1}(\phi))\;\;  -A_{m_2}^T(\tx_{m_2}, \tu_{m_2}(\phi))\big)\begin{pmatrix}
\bu_{m_2}^\tau - \tu_{m_2}\\
\blambda_{m_2+1}^\tau - \tlambda_{m_2+1}
\end{pmatrix}d\phi \nonumber\\
&\quad \quad+  \mu(\tx_{m_2} - \bx_{m_2}^\tau)\nonumber\\
& \stackrel{\mathclap{\eqref{equ:upper}}}{\leq}  (2\Upsilon+\mu)\|(\bz_{m_2}^\tau - \tz_{m_2}; \blambda_{m_2+1}^\tau - \tlambda_{m_2+1})\|.
\end{align*}
Thus, we have ($\preceq$ means component-wise $\leq$)
\begin{equation}\label{pequ:J_1}
\J_1 \preceq {\scriptsize \left(\begin{array}{c}
0\\
\vdots\\
0\\
(2\Upsilon+\mu)\nbr{\left(\begin{smallmatrix}
\bz_{m_2}^\tau - \tz_{m_2}\\
\blambda_{m_2+1}^\tau - \tlambda_{m_2+1}
\end{smallmatrix}\right)}\\  \hdashline[2pt/2pt]
\|\bx_{m_1}^\tau - \tx_{m_1}\|\\
0\\
\vdots\\
0
\end{array}\right)},\quad\quad \J_2\asymp {\scriptsize \left(\begin{array}{c}
o(\Psi_{m_1}^\tau) + O((\Psi_{m_1+1}^\tau)^2)\\
\vdots\\
o(\Psi_{m_2-1}^{\tau}) + O((\Psi_{m_2}^\tau)^2)\\
o(\Psi_{m_2}^\tau)\\  \hdashline[2pt/2pt]
0\\
O((\Psi_{m_1}^\tau)^2)\\
\vdots\\
O((\Psi_{m_2-1}^\tau)^2)
\end{array}\right)}.
\end{equation}
\noindent\textbf{Term $\J_2$}. Applying Assumption \ref{ass:5} so that $\|\hH^{(i)} - H^{(i)}\| = o(1)$, and using the Lipschitz continuity of $H^{(i)}$ and $\{A_k, B_k\}$ assumed by Assumption \ref{ass:6}, we immediately obtain \eqref{pequ:J_1} for $\J_2$ ($\asymp$ means component-wise $=$).

Finally, we apply \cite[Lemma 2]{Na2023Superconvergence} and know that, there exists a constant $\tilde{C}>0$ independent of $\tau$ and algorithmic parameters $\beta,\eta_1,\eta_2$ ($\rho$ is the same as Theorem \ref{thm:3}), such that $\forall k\in[m_1, m_2]$,
\begin{align}\label{pequ:400}
\|\tbz_{i, k}^{\tau+1} - \tz_k\| \leq & \tilde{C}\cbr{ \rho^{k-m_1}\|\bx_{m_1}^\tau - \tx_{m_1}\| + \rho^{m_2-k}\nbr{\begin{pmatrix}
\bz_{m_2}^\tau - \tz_{m_2}\\
\blambda_{m_2+1}^\tau - \tlambda_{m_2+1}
\end{pmatrix}} }  + \tilde{C}\sum_{j=m_1}^{m_2}\rho^{|k-j|}\cdot o(\Psi^\tau) \nonumber\\
= & o(\Psi^\tau) + \tilde{C}\cbr{ \rho^{k-m_1}\|\bx_{m_1}^\tau - \tx_{m_1}\| + \rho^{m_2-k}\nbr{\begin{pmatrix}
\bz_{m_2}^\tau - \tz_{m_2}\\
\blambda_{m_2+1}^\tau - \tlambda_{m_2+1}
\end{pmatrix}} },
\end{align}
where the second equality is due to $\sum_{j=m_1}^{m_2}\rho^{|k-j|} \leq 2\sum_{j=0}^{\infty}\rho^j<\infty$. The inequality \eqref{pequ:400} holds for $\|\tblambda_{i, k}^\tau - \tlambda_k\|$ as well. Using $\Psi_k^{\tau+1} \leq \|\tbz_{i, k}^{\tau+1} - \tz_k\| + \|\tblambda_{i, k}^\tau - \tlambda_k\|$, we rescale $C_1$ by $C_1\leftarrow \max\{C_1, 2\tilde{C}\}$~and complete the proof.

\end{APPENDICES}


\bibliographystyle{informs2014} 
\bibliography{ref} 

\begin{flushright}
\scriptsize \framebox{\parbox{\textwidth}{Government License: The submitted manuscript has been created by UChicago Argonne, LLC, Operator of Argonne National Laboratory (``Argonne"). Argonne, a U.S. Department of Energy Office of Science laboratory, is operated under Contract No. DE-AC02-06CH11357.  The U.S. Government retains for itself, and others acting on its behalf, a paid-up nonexclusive, irrevocable worldwide license in said article to reproduce, prepare derivative works, distribute copies to the public, and perform publicly and display publicly, by or on behalf of the Government. The Department of Energy will provide public access to these results of federally sponsored research in accordance with the DOE Public Access Plan. http://energy.gov/downloads/doe-public-access-plan. }}
\normalsize
\end{flushright}

\end{document}